\newcommand{\rd}{\,\mathrm{d}}
\numberwithin{equation}{section}
\newtheorem{theorem}{Theorem}[section]
\newtheorem{corollary}[theorem]{Corollary}
\newtheorem{proposition}[theorem]{Proposition}
\begin{document}

\title{A particle method for the multispecies Landau equation\footnote{JAC was supported by the Advanced Grant Nonlocal-CPD (Nonlocal PDEs for Complex Particle Dynamics: Phase Transitions, Patterns and Synchronization) of the European Research Council Executive Agency (ERC) under the European Union's Horizon 2020 research and innovation programme (grant agreement No. 883363). JAC was also partially supported by the EPSRC grant numbers EP/T022132/1 and EP/V051121/1. JH and SVF's research was supported in part by AFOSR grant FA9550-21-1-0358 and DOE grant DE-SC0023164. SVF was also supported by the Pacific Institute for the Mathematical Sciences (PIMS). The research and findings may not reflect those of the Institute.}}

\author{Jos\'e A. Carrillo\footnote{Mathematical Institute, University of Oxford, Oxford OX2 6GG, UK (carrillo@maths.ox.ac.uk).}, \  \ Jingwei Hu\footnote{Department of Applied Mathematics, University of Washington, Seattle, WA 98195, USA (hujw@uw.edu).}, \  \  Samuel Q. Van Fleet\footnote{Department of Applied Mathematics, University of Washington, Seattle, WA 98195, USA (svfleet@uw.edu).}}
\maketitle

\begin{abstract}
    The multispecies Landau collision operator describes the two-particle, small scattering angle or grazing collisions in a plasma made up of different species of particles such as electrons and ions.  Recently, a structure preserving deterministic particle method \cite{single_species_Landau_particle_method} has been developed for the single species spatially homogeneous Landau equation.  This method relies on a regularization of the Landau collision operator so that an approximate solution, which is a linear combination of Dirac delta distributions, is well-defined.  Based on a weak form of the regularized Landau equation, the time dependent locations of the Dirac delta functions satisfy a system of ordinary differential equations.  In this work, we extend this particle method to the multispecies case, and examine its conservation of mass, momentum, and energy, and decay of entropy properties. We show that the equilibrium distribution of the regularized multispecies Landau equation is a Maxwellian distribution, and state a critical condition on the regularization parameters that guarantees a species independent equilibrium temperature. A convergence study comparing an exact multispecies BKW solution to the particle solution shows approximately 2nd order accuracy. Important physical properties such as conservation, decay of entropy, and equilibrium distribution of the particle method are demonstrated with several numerical examples.
\end{abstract}

{\small 
{\bf Keywords.}  particle method, multispecies Landau equation, structure-preserving, Maxwellian


\section{Introduction}
\label{sec:Intro}
The multispecies Vlasov-Landau equation is a kinetic model that governs the evolution of collisional plasma made up of multiple different species of particles (commonly ions and electrons) and is given as 
\begin{equation}\label{eq:multispecies Vlasov-Landau equation}
\partial_{t}f_i + v\cdot \nabla_x f_i + \frac{q_i}{m_i}\left( E + v \times B\right) \cdot \nabla_v f_i = \sum_{j=1}^{s}Q_{ji}(f_j,f_i), \quad i = 1,...,s,
\end{equation}
where $f_i(t,x,v)$ is the number distribution function for particles of species $i$, that is, $f_i(t,x,v)$ gives the number of particles per unit volume with position $x \in \Omega \subset \mathbb{R}^d$ and velocity $v \in \mathbb{R}^d$ ($d=2$ or $3$).  $E$ and $B$ are the electric and magnetic fields either given externally or determined self-consistently via Maxwell's equations.  $q_i$ and $m_i$ are the charge and mass of particles of species $i$.  The Landau collision operator derived by Landau in \cite{L1936} is
\begin{equation}\label{eq:Landau operator}
        Q_{ji}\left(f_j,f_i\right)(v) = \nabla_v \cdot \int_{\mathbb{R}^{d}}A_{ji}\left(v-v_{*}\right)\left(\frac{1}{m_i}f_j(v_{*})\nabla_vf_i(v) - \frac{1}{m_j}f_i(v)\nabla_{v_{*}}f_j(v_{*})\right) \rd{v_{*}},
\end{equation}
with the kernel given by 
\begin{equation}\label{eq:kernel Landau operator}
    A_{ji}(z) = \frac{|\log \delta|}{8\pi \varepsilon_0^2}\frac{q_i^2q_j^2}{m_i}|z|^{\gamma} \left(|z|^2I_d - z \otimes z\right), \quad I_d  \text{ is the } d\times d  \text{ identity matrix},
\end{equation}
where $|\log{\delta}|$ is the Coulomb logarithm, $\epsilon_0$ is the vacuum permittivity and $-d-1\leq \gamma\leq 1$.  The case where $\gamma = 0$ is known as the Maxwell collision and $\gamma = -3$ is known as the Coulomb collision.  The Landau collision operator can be derived from the Boltzmann collision operator in the case of grazing collisions \cite{DLD1992,Desvillettes1992,Villani1998}.  


In this work, we focus exclusively on the spatially homogeneous version of (\ref{eq:multispecies Vlasov-Landau equation}), i.e.,
\begin{equation}\label{eq:homogenous Landau equation}
    \partial_tf_i = \sum_{j = 1}^s Q_{ji}\left(f_j,f_i \right), \quad f_i=f_i(t,v), \quad i = 1,...,s, 
\end{equation}
and develop a particle method for (\ref{eq:homogenous Landau equation}) following a similar idea in \cite{single_species_Landau_particle_method}, where a particle method is proposed for the single-species Landau equation. We will emphasize properties and features that particularly arise in the multispecies case. We note that recent work \cite{ZPH2022} also considers a similar extension of the particle method in \cite{single_species_Landau_particle_method} to the multispecies Landau equation. However, our work improves upon it in several aspects which we will highlight later in this section. 

To begin with, let us first review the basic properties of the multispecies Landau equation (\ref{eq:homogenous Landau equation}).  We rewrite \eqref{eq:Landau operator} in the ``$\log$" form
\begin{equation}\label{eq:Landau operator log form}
    Q_{ji}\left(f_j,f_i\right)(v) = \nabla_v \cdot \int_{\mathbb{R}^{d}}A_{ji}\left(v-v_{*}\right)\left(\frac{1}{m_i}\nabla_v \log{f_i} - \frac{1}{m_j}\nabla_{v_{*}}\log{f_{j*}}\right) f_{j*}f_i\rd{v_{*}},
\end{equation}
where $f_i := f_i(v)$ and $f_{j*} := f_{j}(v_*)$.  For the test function $\phi$, we can show that
\begin{equation}\label{eq:Landau weak form 1}
    \int_{\mathbb{R}^d}Q_{ji}(f_j,f_i) \phi\rd{v} = -\iint_{\mathbb{R}^{2d}} \nabla_v \phi  \cdot A_{ji}(v-v_{*})\left(\frac{1}{m_i}\nabla_v \log{f_i} - \frac{1}{m_j}\nabla_{v_{*}}\log{f_{j*}}\right) f_{j*}f_i\rd{v}\rd{v_{*}}.
\end{equation}
On the other hand, for the test function $\varphi$, we have
\begin{equation}\label{eq:Landau weak form 2}
    \int_{\mathbb{R}^d}Q_{ij}(f_i,f_j) \varphi\rd{v} = \iint_{\mathbb{R}^{2d}} \frac{m_i}{m_j}\nabla_{v_*} \varphi_*  \cdot A_{ji}(v-v_{*})\left(\frac{1}{m_i}\nabla_v \log{f_i} - \frac{1}{m_j}\nabla_{v_{*}}\log{f_{j*}}\right) f_{j*}f_i\rd{v}\rd{v_{*}},
\end{equation}
by switching the indices $i$ and $j$, and switching $v$ and $v_*$, and using $A_{ij}(z) = \frac{m_i}{m_j}A_{ji}(z)$.  Then, adding \eqref{eq:Landau weak form 1} and \eqref{eq:Landau weak form 2} we obtain the following weak form
\begin{equation}\label{eq:Landau operator averaged weak form}
\begin{aligned}
    &\int_{\mathbb{R}^d}Q_{ji}(f_j,f_i) \phi\rd{v} +  \int_{\mathbb{R}^d}Q_{ij}(f_i,f_j) \varphi \rd{v}  \\
   =& -\iint_{\mathbb{R}^{2d}} \left(\nabla_v \phi - \frac{m_i}{m_j}\nabla_{v_{*}}\varphi_*\right) \cdot A_{ji}(v-v_{*})\left(\frac{1}{m_i}\nabla_v \log{f_i} - \frac{1}{m_j}\nabla_{v_{*}}\log{f_{j*}}\right) f_{j*}f_i\rd{v}\rd{v_{*}}.
\end{aligned}
\end{equation} 
Using this weak form along with the fact that $A_{ji}(z)$ is positive semidefinite and $ A_{ji}(z)z = 0$, one can show that the solution to (\ref{eq:homogenous Landau equation}) satisfies the conservation of total mass, momentum, and energy:
\begin{align}
    &\frac{\rd}{\rd{t}} \sum_{i=1}^s \int_{\mathbb{R}^d}f_i\phi \rd{v} = 0, \quad \mbox{for} \ \phi = 1,m_iv,m_i|v|^2,\label{eq:total conservation of mass, momentum, and energy}
\end{align}
and the decay of total entropy:
\begin{align}
    &\frac{\rd}{\rd{t}} \sum_{i=1}^s \int_{\mathbb{R}^d}f_i\log{f_i} \rd{v} \leq 0,
\end{align}
with the equality obtained if and only if $f_i$ becomes the Maxwellian function:
\begin{equation}\label{eq:Landau equilibrium distribution}
    f_i = n_i\left(\frac{m_i}{2\pi T}\right)^{\frac{d}{2}}\exp{\left(-\frac{m_i|v-u|^2}{2T}\right)},
\end{equation}
where
\begin{equation}
    n_i = \int_{\mathbb{R}^d}f_i\rd{v}, \quad u = \frac{1}{\sum_{i=1}^{s}m_in_i}\sum_{i=1}^{s}m_i\int_{\mathbb{R}^d}f_iv\rd{v}, \quad T = \frac{1}{d\sum_{i=1}^sn_i}\sum_{i=1}^sm_i\int_{\mathbb{R}^d}f_i|v-u|^2\rd{v},
\end{equation}
are, respectively, the number density of species $i$, bulk velocity, and bulk temperature. 
 We refer to \cite{GZ2017} Theorem 4 for the proof.  

To apply the particle method to the homogeneous Landau equation \eqref{eq:homogenous Landau equation}, we first write it as a nonlinear transport equation
\begin{equation}\label{eq:Landau transport form}
    \partial_t f_i = \sum_{j = 1}^s Q_{ji}\left(f_j,f_i \right)=\nabla_v\cdot\left(\sum_{j=1}^{s}U_{ji}(f_j,f_i)f_i\right),
\end{equation}
where the velocity field is given by
\begin{equation}\label{eq:Landau velocity field introduction}
\sum_{j=1}^sU_{ji}\left(f_j,f_i\right) (v)=  \sum_{j=1}^{s}\int_{\mathbb{R}^{d}}A_{ji}\left(v-v_{*}\right)\left(\frac{1}{m_i}\nabla_v \log{f_i} - \frac{1}{m_j}\nabla_{v_{*}}\log{f_{j*}}\right) f_{j*}\rd{v_{*}}.
\end{equation}
The classical particle method \cite{Particle_method_review} seeks to approximate $f_i$ as a linear combination of Dirac delta distributions:
\begin{equation}\label{eq:particle method approximate solution}
f_i(t,v)\approx    f_i^N(t,v) = \sum_{p = 1}^N w^i_p\delta(v-v^i_p(t)),
\end{equation}
where $w_p^i$, $v_p^i$ are the particle weights and velocities of species $i$, and $N$ is the total number of particles used in species $i$.  

However, the $\log{f_i}$ and $\log{f_j}$ terms in \eqref{eq:Landau velocity field introduction} are not well-defined for Dirac delta functions and thus to proceed with the particle method, \eqref{eq:Landau velocity field introduction} must be regularized in some way. 
We follow the regularization strategy used for the nonlinear Fokker-Planck equations in \cite{CCP2019} and for the single-species Landau collision operator in \cite{single_species_Landau_particle_method}. This approach first recognizes that the $\nabla_v\log{f_i}$ term can be written as $\nabla_v\frac{\delta E_i}{\delta f_i}$, the gradient of the variational derivative of the Boltzmann entropy functional $E_i: = E(f_i)=\int_{\mathbb{R}^d}f_i\log{f_i}\rd{v}$, and then regularizes the entropy functional as
\begin{equation}\label{eq:regularized entropy}
    E^{\epsilon_i}_i:= E^{\epsilon_i}(f_i) = \int_{\mathbb{R}^d}  (f_i*\psi^{\epsilon_i})\log(f_i*\psi^{\epsilon_i})\rd{v},
\end{equation}
where $\psi^{\epsilon_i}$ is a  mollifier function that satisfies 
\begin{equation}\label{eq:mollifier function}
\psi^{\epsilon_i}(v) = \frac{1}{\epsilon_i^d}\psi\left(\frac{v}{\epsilon_i}\right), \quad \int_{\mathbb{R}^d}\psi(v)\rd{v} = 1, \quad \psi(v)=\psi(-v).
\end{equation} 
It is important to note that the regularization parameter $\epsilon_i > 0$ can be different for each species. 
For the regularized entropy, one can calculate that
\begin{equation}\label{eq:regularized variational derivative and gradient}
    \frac{\delta E_i^{\epsilon_i}}{\delta f_i} = \psi^{\epsilon_i}*\log{(\psi^{\epsilon_i}*f_i)}+1,  
    \quad \nabla_v\frac{\delta E_i^{\epsilon_i}}{\delta f_i} = (\nabla_v\psi^{\epsilon_i})*\log{(\psi^{\epsilon_i}*f_i)}.
\end{equation}
Correspondingly, the equation \eqref{eq:Landau transport form} is regularized as
\begin{equation}\label{eq:regularied Landau transport}
    \partial_tf_i = \sum_{j = 1}^s \Tilde{Q}_{ji}\left(f_j,f_i \right)=\nabla_v\cdot\left(\sum_{j=1}^s\Tilde{U}_{ji}(f_j,f_i)f_i\right),
\end{equation}
with
\begin{equation}\label{eq:regularized Landau operator}
    \Tilde{Q}_{ji}\left(f_j,f_i\right)(v) = \nabla_v \cdot \int_{\mathbb{R}^{d}}A_{ji}\left(v-v_{*}\right)\left(\frac{1}{m_i}\nabla_v \frac{\delta E_i^{\epsilon_i}}{\delta f_i} - \frac{1}{m_j}\nabla_{v_{*}}\frac{\delta E_{j*}^{\epsilon_j}}{\delta f_{j*}}\right) f_{j*}f_i\rd{v_{*}},
\end{equation}
and the velocity field
\begin{equation}\label{eq: Landau equation velocity field}
    \sum_{j=1}^s \Tilde{U}_{ji}\left(f_j,f_i\right)(v)= \sum_{j=1}^s \int_{\mathbb{R}^{d}}A_{ji}\left(v-v_{*}\right)\left(\frac{1}{m_i}\nabla_v \frac{\delta E_i^{\epsilon_i}}{\delta f_i} - \frac{1}{m_j}\nabla_{v_{*}}\frac{\delta E_{j*}^{\epsilon_{j}}}{\delta f_{j*}}\right) f_{j*}\rd{v_{*}}.
\end{equation}
This regularized equation can then invoke a particle solution \eqref{eq:particle method approximate solution}, whose particle velocities satisfy a large coupled ODE system:
\begin{equation}\label{eq:particle method ODE system} 
    \frac{\rd{v^i_p(t)}}{\rd{t}} = -\sum_{j=1}^{s}\Tilde{U}_{ji}(f^N_j,f^N_i)(v_p^i(t)).
\end{equation}

In the rest of this paper, we first study in Section \ref{sec:regularized landau} the structure of the regularized multi-species Landau equation (\ref{eq:regularied Landau transport}), characterizing its conservation properties, entropy decay structure, as well as the equilibrium distribution. Then in Section \ref{sec:Particle method}, we construct the full particle method for the regularized Landau equation, and show that the semi-discrete (continuous in time) method conserves total mass, momentum, and energy, as well as decays total entropy. We also discuss the conservation properties of the fully discrete method. Extensive numerical examples are presented in Section \ref{sec:numerical examples} to showcase the accuracy and structure-preserving properties of the method. 

Compared to the recent work \cite{ZPH2022}, the novelty of our work lies in the following: 1) We clearly identify the equilibrium of the regularized multispecies Landau equation as a Maxwellian function with species dependent temperature. This is in contrast to the true Maxwellian (\ref{eq:Landau equilibrium distribution}) of the original Landau equation where a unified temperature is reached for all species. This theoretical finding later becomes crucial in choosing the regularization parameters in the particle method so as to capture the correct long term behavior of the solution. This may partially answer the failure in a temperature relaxation case reported in \cite{ZPH2022}. 2) We implement both the forward Euler and implicit midpoint method. The latter exhibits perfect conservation of momentum and energy and is second order accurate, while in \cite{ZPH2022} a different first order method is used. 3) We construct an exact BKW solution to the multispecies Landau equation (see Appendix A) and use it to carefully study the order of accuracy of the particle method. This appears to be the first exact solution reported in the literature for the multispecies Landau equation and can be valuable for validating many numerical algorithms. In \cite{ZPH2022}, no convergence studies were made.

To conclude this section, we mention that besides the particle method, there are several Eulerian or mesh based methods for approximating the Landau equation, for example, finite difference, finite element, and spectral methods \cite{BUET1997310,HAGER2016644,Hirvijoki_2017,Lemou1998,DLD1994,Shiroto_2019,TAITANO2016391,TAITANO2015357,PARESCHI2000216}. Some of these methods can also be made structure-preserving. However, when dealing with multispecies case, it is commonly known that mesh based methods usually suffer from order deterioration for large mass ratios when a uniform mesh is used for all species, see, for instance \cite{JAISWAL201956}. As a remedy, some kind of adaptive mesh or coordinate transform is needed to maintain the accuracy for both heavy and light species \cite{TAITANO2016391}. We point out that the particle method proposed here has a clear advantage to handle the issue of large mass ratios as it is quite easy to sample different species with different resolution of particles, which we will demonstrate in our numerical examples.


\section{Regularized multispecies Landau equation}
\label{sec:regularized landau}


In this section, we study the structure of the regularized multi-species Landau
equation (\ref{eq:regularied Landau transport}), characterizing its conservation properties, entropy decay structure, and in particular identifying its equilibrium
distribution.

First of all, similarly as the derivation of (\ref{eq:Landau weak form 1}) and (\ref{eq:Landau operator averaged weak form}), one can easily obtain the following weak forms of the regularized Landau operator
\begin{equation} \label{weak1}
    \int_{\mathbb{R}^d}\tilde{Q}_{ji}(f_j,f_i) \phi\rd{v} = -\iint_{\mathbb{R}^{2d}} \nabla_v \phi  \cdot A_{ji}(v-v_{*})\left(\frac{1}{m_i}\nabla_v \frac{\delta E_i^{\epsilon_i}}{\delta f_i} - \frac{1}{m_j}\nabla_{v_{*}}\frac{\delta E_{j*}^{\epsilon_j}}{\delta f_{j*}}\right)f_{j*}f_i\rd{v}\rd{v_{*}}.
\end{equation}
\begin{equation} \label{weak2}
\begin{aligned}
    &\int_{\mathbb{R}^d}\tilde{Q}_{ji}(f_j,f_i) \phi\rd{v} +  \int_{\mathbb{R}^d}\tilde{Q}_{ij}(f_i,f_j) \varphi \rd{v}  \\
   =& -\iint_{\mathbb{R}^{2d}} \left(\nabla_v \phi - \frac{m_i}{m_j}\nabla_{v_{*}}\varphi_*\right) \cdot A_{ji}(v-v_{*})\left(\frac{1}{m_i}\nabla_v \frac{\delta E_i^{\epsilon_i}}{\delta f_i} - \frac{1}{m_j}\nabla_{v_{*}}\frac{\delta E_{j*}^{\epsilon_j}}{\delta f_{j*}}\right)f_{j*}f_i\rd{v}\rd{v_{*}}.
\end{aligned}
\end{equation} 
Using these weak forms, we can show the following.
\begin{proposition}\label{prop:regularized conservation and entropy decay}
    Assume the mollifier function $\psi^{\epsilon_i}(v)$ satisfies \eqref{eq:mollifier function}, 
     then the regularized Landau equation \eqref{eq:regularied Landau transport} satisfies
    \begin{enumerate}
        \item conservation of total mass, momentum, and energy:
        $$\frac{\rd}{\rd t}\sum_{i = 1}^s\int_{\mathbb{R}^d} f_i\phi\rd{v} = 0, \quad \phi = 1, m_iv, m_i|v|^2;$$
        \item decay of total regularized entropy 
        \begin{equation*}
        \begin{aligned}
            \frac{\rd }{\rd t}\sum_{i = 1}^{s} E^{\epsilon_i}_{i}=-\frac{1}{2}\sum_{i,j = 1}^s \iint_{\mathbb{R}^{2d}}
        & m_i\left(\frac{1}{m_i}\nabla_v\frac{\delta E^{\epsilon_i}_i}{\delta f_i}- \frac{1}{m_j} \nabla_{v_*} \frac{\delta E_{j*}^{\epsilon_j}}{\delta f_{j*}}\right)\\
        &\cdot A_{ji}(v-v_*)\left(\frac{1}{m_i}\nabla_v \frac{\delta E^{\epsilon_i}_i}{\delta f_i} - \frac{1}{m_j}\nabla_{v_{*}}\frac{\delta E_{j*}^{\epsilon_j}}{\delta f_{j*}}\right)f_{j*}f_i\rd{v}\rd{v_*} \leq 0,
        \end{aligned}
         \end{equation*}
         where $E^{\epsilon_i}_{i}$ is given by (\ref{eq:regularized entropy}).
    \end{enumerate}
\end{proposition}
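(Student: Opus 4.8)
The plan is to derive all four assertions from the two weak forms \eqref{weak1} and \eqref{weak2}, together with the algebraic properties of the kernel recalled above: $A_{ji}(z)$ is symmetric and positive semidefinite, $A_{ji}(z)z=0$, and $A_{ij}(z)=\frac{m_i}{m_j}A_{ji}(z)$. Since $\partial_t f_i=\sum_{j=1}^s\tilde{Q}_{ji}(f_j,f_i)$, for any species-dependent weight $\phi_i$ one has
\[
\frac{\rd}{\rd t}\sum_{i=1}^s\int_{\mathbb{R}^d}f_i\phi_i\rd v=\sum_{i,j=1}^s\int_{\mathbb{R}^d}\tilde{Q}_{ji}(f_j,f_i)\phi_i\rd v.
\]
For \emph{mass} the claim is immediate: taking $\phi=1$ in \eqref{weak1} gives $\nabla_v\phi=0$, so each $\int_{\mathbb{R}^d}\tilde{Q}_{ji}\rd v=0$ and every species conserves mass separately.

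For \emph{momentum} and \emph{energy} I would symmetrize the double sum before invoking \eqref{weak2}. Relabeling $i\leftrightarrow j$ shows $\sum_{i,j}\int\tilde{Q}_{ji}\phi_i\rd v=\frac12\sum_{i,j}\big(\int\tilde{Q}_{ji}\phi_i\rd v+\int\tilde{Q}_{ij}\phi_j\rd v\big)$, and the bracketed expression is exactly the left-hand side of \eqref{weak2} with $\phi=\phi_i$ and $\varphi=\phi_j$ (so $\varphi_*=\phi_j(v_*)$). The rate therefore equals $-\frac12\sum_{i,j}\iint_{\mathbb{R}^{2d}}\big(\nabla_v\phi_i-\frac{m_i}{m_j}\nabla_{v_*}\phi_j(v_*)\big)\cdot A_{ji}(v-v_*)\big(\frac{1}{m_i}\nabla_v\frac{\delta E_i^{\epsilon_i}}{\delta f_i}-\frac{1}{m_j}\nabla_{v_*}\frac{\delta E_{j*}^{\epsilon_j}}{\delta f_{j*}}\big)f_{j*}f_i\rd v\rd v_*$. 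Taking $\phi_i=m_iv$ gives $\nabla_v\phi_i-\frac{m_i}{m_j}\nabla_{v_*}\phi_j(v_*)=m_iI_d-\frac{m_i}{m_j}m_jI_d=0$, so the integrand vanishes identically. Taking $\phi_i=m_i|v|^2$ gives $\nabla_v\phi_i-\frac{m_i}{m_j}\nabla_{v_*}\phi_j(v_*)=2m_i(v-v_*)$, and since $A_{ji}$ is symmetric with $A_{ji}(z)z=0$ we have $(v-v_*)\cdot A_{ji}(v-v_*)=0$, again killing the integrand. Both conservation laws follow.

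For the \emph{entropy decay} I would differentiate $E_i^{\epsilon_i}$ through its variational derivative from \eqref{eq:regularized variational derivative and gradient}, so that $\frac{\rd}{\rd t}E_i^{\epsilon_i}=\int_{\mathbb{R}^d}\frac{\delta E_i^{\epsilon_i}}{\delta f_i}\partial_t f_i\rd v=\sum_j\int_{\mathbb{R}^d}\tilde{Q}_{ji}\frac{\delta E_i^{\epsilon_i}}{\delta f_i}\rd v$. Summing over $i$ and applying the very same symmetrization, now with $\phi_i=\frac{\delta E_i^{\epsilon_i}}{\delta f_i}$, and then \eqref{weak2}, yields the stated quadratic form once the prefactor $m_i$ is pulled out of the left factor via $\nabla_v\frac{\delta E_i^{\epsilon_i}}{\delta f_i}-\frac{m_i}{m_j}\nabla_{v_*}\frac{\delta E_{j*}^{\epsilon_j}}{\delta f_{j*}}=m_i\big(\frac{1}{m_i}\nabla_v\frac{\delta E_i^{\epsilon_i}}{\delta f_i}-\frac{1}{m_j}\nabla_{v_*}\frac{\delta E_{j*}^{\epsilon_j}}{\delta f_{j*}}\big)$; the constant $-\frac12$ is precisely the symmetrization factor. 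Writing $\xi:=\frac{1}{m_i}\nabla_v\frac{\delta E_i^{\epsilon_i}}{\delta f_i}-\frac{1}{m_j}\nabla_{v_*}\frac{\delta E_{j*}^{\epsilon_j}}{\delta f_{j*}}$, each summand is $m_i\,\xi\cdot A_{ji}(v-v_*)\xi\,f_{j*}f_i\ge 0$ because $m_i>0$, $f_i,f_{j*}\ge 0$, and $A_{ji}$ is positive semidefinite, so the overall sign gives $\frac{\rd}{\rd t}\sum_i E_i^{\epsilon_i}\le 0$.

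The whole argument is essentially bookkeeping, so I do not anticipate a genuine obstacle; the point needing the most care is the symmetrization, where one checks that relabeling $i\leftrightarrow j$ turns the single sum into the symmetric pair of \eqref{weak2} and, in doing so, produces the constant $\frac12$. A secondary technical issue, which I would package into standing regularity and decay assumptions rather than belabor, is justifying the interchange $\frac{\rd}{\rd t}\int_{\mathbb{R}^d}(f_i*\psi^{\epsilon_i})\log(f_i*\psi^{\epsilon_i})\rd v=\int_{\mathbb{R}^d}\frac{\delta E_i^{\epsilon_i}}{\delta f_i}\partial_t f_i\rd v$ together with the integrations by parts and vanishing boundary terms at infinity underlying \eqref{weak1}--\eqref{weak2} themselves.
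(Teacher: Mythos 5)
Your proposal is correct and takes essentially the same route as the paper: mass from \eqref{weak1} with $\phi=1$, momentum and energy from the symmetrized form \eqref{weak2} with $\phi_i=m_iv$ and $\phi_i=m_i|v|^2$ using $A_{ji}(z)z=0$, and entropy decay by testing \eqref{weak2} with $\phi=\frac{\delta E_i^{\epsilon_i}}{\delta f_i}$, $\varphi=\frac{\delta E_j^{\epsilon_j}}{\delta f_j}$ and invoking positive semidefiniteness of $A_{ji}$, with the $-\frac{1}{2}$ arising from the same $i\leftrightarrow j$ relabeling. The only cosmetic difference is ordering---the paper first establishes the pairwise inequality and then differentiates the regularized entropy (carefully using the evenness of the mollifier to move the convolution onto $\partial_t f_i$), while you differentiate first and then symmetrize---so the content is identical.
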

\begin{proof}
\begin{enumerate}
Choosing proper test functions in (\ref{weak1})-(\ref{weak2}), we can show
\begin{align}
&\int_{\mathbb{R}^d}\tilde{Q}_{ji}(f_j,f_i)\rd{v}=0,\\
&\int_{\mathbb{R}^d}\tilde{Q}_{ji}(f_j,f_i)m_i v\rd{v}+\int_{\mathbb{R}^d}\tilde{Q}_{ij}(f_i,f_j)m_j v\rd{v}=0,\\
&\int_{\mathbb{R}^d}\tilde{Q}_{ji}(f_j,f_i)m_i |v|^2\rd{v}+\int_{\mathbb{R}^d}\tilde{Q}_{ij}(f_i,f_j)m_j |v|^2\rd{v}=0,
\end{align}
where we used $A_{ji}(z)z=0$ in the third equation. Then the conservation of mass, momentum and energy follows straightforwardly.

To see the entropy dissipation, we choose $\phi=\frac{\delta E^{\epsilon_i}_i}{\delta f_i}$ and $\varphi=\frac{\delta E^{\epsilon_j}_j}{\delta f_j}$ in (\ref{weak2}) to obtain
\begin{equation} \label{weak3}
\begin{aligned}
&\int_{\mathbb{R}^d}\tilde{Q}_{ji}(f_j,f_i) \frac{\delta E^{\epsilon_i}_i}{\delta f_i} \rd{v} +  \int_{\mathbb{R}^d}\tilde{Q}_{ij}(f_i,f_j) \frac{\delta E^{\epsilon_j}_j}{\delta f_j} \rd{v}  \\
   =& -\iint_{\mathbb{R}^{2d}} m_i\left( \frac{1}{m_i}\nabla_v\frac{\delta E^{\epsilon_i}_i}{\delta f_i}- \frac{1}{m_j} \nabla_{v_*} \frac{\delta E_{j*}^{\epsilon_j}}{\delta f_{j*}}\right) \cdot A_{ji}(v-v_{*})\left(\frac{1}{m_i}\nabla_v \frac{\delta E_i^{\epsilon_i}}{\delta f_i} - \frac{1}{m_j}\nabla_{v_{*}}\frac{\delta E_{j*}^{\epsilon_j}}{\delta f_{j*}}\right)f_{j*}f_i\rd{v}\rd{v_{*}}\leq 0,
\end{aligned}
\end{equation} 
since $A_{ji}$ is positive semidefinite.
Then
    \begin{equation*}
    \begin{aligned}
    &\frac{\rd{}}{\rd{t}} \sum_{i = 1}^{s}E_i^{\epsilon_i} = \frac{\rd{}}{\rd{t}} \sum_{i=1}^s \int_{\mathbb{R}^d}(f_i*\psi^{\epsilon_i})\log{(f_i*\psi^{\epsilon_i})}\rd{v} = \sum_{i=1}^s\int_{\mathbb{R}^d} (\partial_tf_i*\psi^{\epsilon_i})\left(\log{(f_i*\psi^{\epsilon_i}})+1\right)\rd{v}\\    =&\sum_{i=1}^s\int_{\mathbb{R}^d}\partial_tf_i\left(\psi^{\epsilon_i}*(\log{(\psi^{\epsilon_i}*f_i)}+1 )\right)\rd{v}=\sum_{i=1}^s\int_{\mathbb{R}^d}\partial_tf_i\left(\psi^{\epsilon_i}*\log{(\psi^{\epsilon_i}*f_i)}+1 \right)\rd{v} \\
    =&\sum_{i=1}^s \int_{\mathbb{R}^d}\partial_t f_i\frac{\delta E^{\epsilon_i}_i}{\delta f_i}\rd{v} =\sum_{i,j=1}^s \int_{\mathbb{R}^d} \tilde{Q}_{ji}(f_j,f_i)\frac{\delta E^{\epsilon_i}_i}{\delta f_i}\rd{v}\\
    =& \frac{1}{2} \sum_{i,j=1}^n \left(\int_{\mathbb{R}^d}\tilde{Q}_{ji}(f_j,f_i) \frac{\delta E^{\epsilon_i}_i}{\delta f_i} \rd{v} +  \int_{\mathbb{R}^d}\tilde{Q}_{ij}(f_i,f_j) \frac{\delta E^{\epsilon_j}_j}{\delta f_j} \rd{v} \right),
    \end{aligned}
\end{equation*}
where we used properties of the mollifier function in passing the equalities in line 2. Finally applying (\ref{weak3}) yields the desired inequality.
\end{enumerate}
\end{proof}

Many mollifier functions can satisfy the condition in \eqref{eq:mollifier function}. Two examples are generalized Gaussian functions and compactly supported functions such as B-splines as mentioned in \cite{Particle_method_review}. In the following, we try to quantify the equilibrium distribution of the regularized Landau equation (\ref{eq:regularied Landau transport}). This would rely crucially on the fact that the mollifier is a Gaussian kernel
\begin{equation}\label{eq:gaussian mollifier}
    \psi^{\epsilon_i}(v) = \frac{1}{(2\pi\epsilon_i)^{\frac{d}{2}}}\exp\left(-\frac{|v|^2}{2\epsilon_i} \right), \quad \epsilon_i>0,
\end{equation}
which will be assumed for the rest of this section.


We first show that the variational derivative of the regularized entropy is a quadratic polynomial at the equilibrium. The technique we use follows Theorem 4 in \cite{GZ2017}.

\begin{proposition}\label{prop:steady state quadratic} If $f_i$ is an equilibrium distribution to (\ref{eq:regularied Landau transport}) or equivalently, $\frac{1}{m_i}\nabla_{v}\frac{\delta E_i^{\epsilon_i}}{\delta f_i} - \frac{1}{m_j}\nabla_{v_{*}}\frac{\delta E_{j*}^{\epsilon_j}}{\delta f_{j*}}$ is in the kernel of $A_{ji}(v-v_*)$ then
\begin{equation}
    \frac{\delta E_i^{\epsilon_i}}{\delta f_i} = \lambda_i^{(0)} + m_i\lambda^{(1)} \cdot v + m_i\lambda^{(2)}\frac{|v|^2}{2},
\end{equation}
where the constants $\lambda^{(0)}_i,\lambda^{(2)} \in \mathbb{R}$, and $\lambda^{(1)} \in \mathbb{R}^d$ are determined from the conserved macroscopic quantities.
\end{proposition}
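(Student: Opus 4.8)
The plan is to combine the explicit structure of the kernel $A_{ji}$ with the smoothness of the regularized variational derivative, and then run the classical collision-invariant argument adapted to the multispecies coupling. Write $g_i := \delta E_i^{\epsilon_i}/\delta f_i$ and $\xi_i := \frac{1}{m_i}\nabla_v g_i$, so that the hypothesis reads: for every pair $(i,j)$, the vector $\xi_i(v) - \xi_j(v_*)$ lies in the kernel of $A_{ji}(v-v_*)$. Since $A_{ji}(z) = c_{ji}|z|^{\gamma}\left(|z|^2 I_d - z\otimes z\right)$ with $c_{ji}>0$, and $|z|^2 I_d - z\otimes z$ is $|z|^2$ times the orthogonal projection onto $z^{\perp}$, its kernel is exactly $\mathrm{span}\{z\}$ for $z\neq 0$. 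Hence the hypothesis is equivalent to the parallelism condition $\xi_i(v)-\xi_j(v_*)\parallel (v-v_*)$ for all $v,v_*$ and all $i,j$. Before using it, I would first record that $g_i\in C^\infty$: because $\psi^{\epsilon_i}$ is Gaussian, $\psi^{\epsilon_i}*f_i$ is smooth and strictly positive on all of $\mathbb{R}^d$ (a strictly positive kernel convolved with a nonnegative measure of positive mass), so $\log(\psi^{\epsilon_i}*f_i)$, and therefore $g_i$ and $\xi_i$, are smooth, legitimizing the pointwise computations below.

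The core step is single-species affineness, extracted from the diagonal term $j=i$ (self-collisions are included in the sum). Fixing $v$ and setting $v_*=v-te$ for an arbitrary direction $e$, the parallelism gives $\xi_i(v)-\xi_i(v-te)\parallel e$; dividing by $t$ and letting $t\to 0$ yields $D\xi_i(v)\,e\parallel e$. As this holds for every $e$, every vector is an eigenvector of $D\xi_i(v)$, forcing $D\xi_i(v)=\alpha_i(v)\,I_d$. Differentiating $\partial_l(\xi_i)_k=\alpha_i\delta_{kl}$ once more and using equality of mixed partials (available since $d\geq 2$) shows $\alpha_i$ is constant, so $\xi_i(v)=\alpha_i v+\beta_i$ with $\alpha_i\in\mathbb{R}$ and $\beta_i\in\mathbb{R}^d$.

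The multispecies coupling is then elementary linear algebra, and is the genuinely new ingredient compared with the single-species case. Inserting the affine forms into the cross-species relation $\alpha_i v+\beta_i-\alpha_j v_*-\beta_j\parallel(v-v_*)$ and evaluating at $v_*=0$ forces $\beta_i-\beta_j$ to be parallel to every $v$, hence $\beta_i=\beta_j=:\lambda^{(1)}$ (again using $d\geq 2$). With the constant vectors identified, choosing $v,v_*$ linearly independent and matching coefficients in $\alpha_i v-\alpha_j v_*=\mu(v-v_*)$ gives $\alpha_i=\alpha_j=:\lambda^{(2)}$. Thus $\xi_i(v)=\lambda^{(2)}v+\lambda^{(1)}$ with species-independent $\lambda^{(1)},\lambda^{(2)}$, and integrating $\nabla_v g_i=m_i(\lambda^{(2)}v+\lambda^{(1)})$ produces $g_i(v)=\lambda_i^{(0)}+m_i\lambda^{(1)}\cdot v+m_i\lambda^{(2)}\frac{|v|^2}{2}$ with a species-dependent integration constant $\lambda_i^{(0)}$; the three constants are finally pinned down by the conserved mass, momentum, and energy.

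I expect the main obstacle to be the rigorous justification of the pointwise eigenvector argument rather than the coupling step: one must ensure the parallelism can be invoked at every point, which is tied to the strict positivity of the equilibrium density, and one must handle the limit $t\to 0$ cleanly even though the scalar factor $\mu$ in the parallelism relation need not a priori be continuous. Phrasing parallelism through the vanishing of the $2\times 2$ minors (the wedge $\left(\xi_i(v)-\xi_j(v_*)\right)\wedge(v-v_*)=0$) rather than through $\mu$ avoids dividing by a possibly singular factor and makes the passage to the Jacobian condition $D\xi_i(v)=\alpha_i(v)I_d$ transparent.
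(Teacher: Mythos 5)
Your proof is correct, and while it rests on the same two pillars as the paper's argument (the kernel of $A_{ji}(v-v_*)$ is $\mathrm{span}\{v-v_*\}$, hence a parallelism condition; then differentiation forces the Hessian of $\frac{\delta E_i^{\epsilon_i}}{\delta f_i}$ to be a constant multiple of the identity), your execution is organized differently. The paper works with the two-point scalar $\lambda_{ij}^{(2)}(v,v_*)$ defined by the parallelism, differentiates the full two-point relation in $v$, restricts to the diagonal $v=v_*$, and extracts species-independence of $\lambda^{(2)}$ from the index-swap symmetry $\lambda_{ij}^{(2)}(v,v)=\lambda_{ji}^{(2)}(v,v)$ together with the observation that the diagonal value cannot depend on $j$; the species-independent $\lambda^{(1)}$ then comes from substituting the quadratic ansatz back into the parallelism relation. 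You instead decouple the problem: first you use only the self-interaction case $j=i$ (legitimately available, since the entropy dissipation sum in Proposition 2.1 includes the diagonal) to prove each $\xi_i=\frac{1}{m_i}\nabla_v\frac{\delta E_i^{\epsilon_i}}{\delta f_i}$ is affine, via the eigenvector argument $D\xi_i(v)e\parallel e$ for all $e$; then the cross-species relation reduces to elementary linear algebra (evaluation at $v_*=0$ for $\beta_i=\beta_j$, linear independence for $\alpha_i=\alpha_j$). Two things your route buys: (i) it cleanly separates the single-species rigidity from the multispecies matching, making transparent why $\lambda^{(1)},\lambda^{(2)}$ are species-independent while $\lambda_i^{(0)}$ is not; (ii) by phrasing parallelism through vanishing $2\times 2$ minors you never differentiate the scalar $\lambda_{ij}^{(2)}(v,v_*)$ itself, whereas the paper's computation implicitly assumes $\lambda_{ij}^{(2)}$ is differentiable up to the diagonal, where the direction $v-v_*$ degenerates --- a regularity point the paper glosses over and that your smoothness remark (Gaussian mollifier implies $\psi^{\epsilon_i}*f_i>0$ and $\frac{\delta E_i^{\epsilon_i}}{\delta f_i}\in C^\infty$) handles properly. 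What the paper's version buys in exchange is that it never needs the diagonal pair $j=i$ as a separate input, since all the information is carried by the symmetrized two-point scalar.
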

\begin{proof}
     By the definition of $A_{ji}(v-v_*)$, a vector belongs to its kernel if it is linearly dependent with $v-v_*$, thus there exists $\lambda_{ij}^{(2)} \in \mathbb{R}$ such that 
     \begin{equation}\label{eq:steady state linear dependence 1}
         \frac{1}{m_i}\nabla_{v}\frac{\delta E_i^{\epsilon_i}}{\delta f_i} - \frac{1}{m_j}\nabla_{v_{*}}\frac{\delta E_{j*}^{\epsilon_j}}{\delta f_{j*}} = \lambda_{ij}^{(2)}(v,v_*)(v-v_*).
     \end{equation}
     Switching the indices $i$ and $j$ along with $v$ and $v_*$ we have
     \begin{equation}\label{eq:steady state linear dependence 2}
         \frac{1}{m_i}\nabla_{v}\frac{\delta E_i^{\epsilon_i}}{\delta f_i} - \frac{1}{m_j}\nabla_{v_{*}}\frac{\delta E_{j*}^{\epsilon_j}}{\delta f_{j*}} = \lambda_{ji}^{(2)}(v_*,v)(v-v_*).
     \end{equation}
     Subtracting \eqref{eq:steady state linear dependence 1} from \eqref{eq:steady state linear dependence 2} results in 
     \begin{equation}
         \lambda_{ij}^{(2)}(v,v_*) = \lambda_{ji}^{(2)}(v_*,v),
     \end{equation}
     and for $v = v_*$, 
\begin{equation}\label{eq:lambda ij symmetry}
         \lambda^{(2)}_{ij}(v,v) = \lambda^{(2)}_{ji}(v,v).
     \end{equation}
     Let $k,\ell \in \{1,...,d\}$ and examine the $k$th element of the derivative of \eqref{eq:steady state linear dependence 1} with respect to $v_{\ell}$
     \begin{equation}
         \frac{1}{m_i}\partial_{ v_{\ell}}\partial_{v_k}\frac{\delta E^{\epsilon_i}_i}{\delta f_i} = \partial_{v_{\ell}} \lambda_{ij}^{(2)}(v,v_*)(v_k-v_{k*}) + \lambda^{(2)}_{ij}(v,v_*)\delta_{k \ell},
     \end{equation}
     and for $v = v_*$,
     \begin{equation}\label{eq:lambda j dependence}
         \frac{1}{m_i}\partial_{ v_{\ell}}\partial_{v_k}\frac{\delta E_i^{\epsilon_i}}{\delta f_i} =  \lambda^{(2)}_{ij}(v,v)\delta_{k \ell}.
     \end{equation}
     Let $n \in \{1,...,d\}$ and differentiate the equation above with respect to $v_n$, then switch the indices $n$ and $k$ to see the following two equations
     \begin{equation}
         \frac{1}{m_i}\partial_{v_n}\partial_{v_\ell}\partial_{v_k}\frac{\delta E_i^{\epsilon_i}}{\delta f_i} = \partial_{v_n}\lambda_{ij}^{(2)}(v,v)\delta_{k\ell}, \quad \mbox{and} \quad
         \frac{1}{m_i}\partial_{v_k}\partial_{v_\ell}\partial_{v_n}\frac{\delta E_i^{\epsilon_i}}{\delta f_i} = \partial_{v_k}\lambda_{ij}^{(2)}(v,v)\delta_{n\ell},
     \end{equation}
     which leads to 
    \begin{equation}
        \partial_{v_k}\lambda_{ij}^{(2)}(v,v)\delta_{n\ell} = \partial_{v_n}\lambda_{ij}^{(2)}(v,v)\delta_{k\ell}.
    \end{equation}
    Consider the case where $n = \ell \neq k$ to see that 
    \begin{equation}
        \partial_{v_k}\lambda_{ij}^{(2)}(v,v) = 0,
    \end{equation}
    which implies that $\lambda_{ij}^{(2)}(v,v)$ is a constant.  Additionally, equation \eqref{eq:lambda ij symmetry} tells us that $\lambda_{ij}^{(2)}$ is symmetric in $i$ and $j$, while equation \eqref{eq:lambda j dependence} tells us $\lambda_{ij}^{(2)}$ does not depend on $j$.  These three properties of $\lambda^{(2)}_{ij}(v,v)$, allow for \eqref{eq:lambda j dependence} to be written as
    \begin{equation}
    \partial_{ v_{\ell}}\partial_{v_k}\frac{\delta E_i^{\epsilon_i}}{\delta f_i} =  m_i\lambda^{(2)}\delta_{k \ell}.
    \end{equation}
    Integrating twice yields 
    \begin{equation}\label{eq:quadratic entropy 1}
        \frac{\delta E_i^{\epsilon_i}}{\delta f_i} =  \lambda_i^{(0)} + \lambda_i^{(1)}\cdot v + m_i \lambda^{(2)} \frac{|v|^2}{2} ,
    \end{equation}
    and substituting \eqref{eq:quadratic entropy 1} into \eqref{eq:steady state linear dependence 1} one can show that $\frac{1}{m_i}\lambda_i^{(1)} = \frac{1}{m_j}\lambda_j^{(1)}$, meaning we can define $\lambda^{(1)} = \frac{1}{m_i}\lambda^{(1)}_i$ and rewrite \eqref{eq:quadratic entropy 1} as
\begin{equation}
        \frac{\delta E^{\epsilon_i}_i}{\delta f_i} = \lambda_i^{(0)} + m_i\lambda^{(1)} \cdot v + m_i\lambda^{(2)}\frac{|v|^2}{2}.
    \end{equation}
\end{proof}
Next we follow a similar technique in \cite{single_species_Landau_particle_method} (lemma 7) to show that if $ \frac{\delta E^{\epsilon_i}_i}{\delta f_i} = \lambda_i^{(0)} +m_i\lambda^{(1)} \cdot v + m_i\lambda^{(2)}\frac{|v|^2}{2}$, then $f_i$ must be a Maxwellian.  

\begin{proposition}\label{Prop:Maxwellian steady state}
    If $f_i$ satisfies the following equation, 
    \begin{equation}\label{eq:quadratic entropy 2}
       \frac{\delta E^{\epsilon_i}_i}{\delta f_i} = \lambda_i^{(0)} +m_i\lambda^{(1)} \cdot v + m_i\lambda^{(2)}\frac{|v|^2}{2} ,
    \end{equation}
    then the equilibrium distribution to the regularized multispecies Landau equation (\ref{eq:regularied Landau transport}) is a Maxwellian function of the form
    \begin{equation} \label{eq:equilibrium regularized Landau}
        f_i(v) = n_i\left(\frac{m_i}{2\pi T_i}\right)^{\frac{d}{2}}\exp{\left(-\frac{m_i|v-u|^2}{2T_i} \right)},
    \end{equation}
    where the density, velocity, and temperature are given by
\begin{equation}\label{eq:Equilibrium distribution particle number, relaxation velocity, and relaxation temperature}
    \begin{aligned}
        n_i &= \left(-\frac{2\pi}{m_i\lambda^{(2)}}\right)^{\frac{d}{2}}\exp{\left(\lambda_i^{(0)}-\frac{dm_i\epsilon_i\lambda^{(2)}}{2}- \frac{m_i^2\epsilon_i\left|\lambda^{(1)}\right|^2}{2\left(1+m_i\epsilon_i\lambda^{(2)}\right)}-\frac{m_i\left|\lambda^{(1)}\right|^2}{2\lambda^{(2)}\left(1+m_i\epsilon_i\lambda^{(2)}\right)}\right)},\\
        u &= -\frac{\lambda^{(1)}}{\lambda^{(2)}},\\
        T_i & = -\left(m_i\epsilon_i + \frac{1}{\lambda^{(2)}}\right).
    \end{aligned}
    \end{equation}
\end{proposition}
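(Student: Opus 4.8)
The plan is to substitute the explicit formula for the variational derivative and reduce the hypothesis to a linear convolution equation that can be inverted in closed form. Using \eqref{eq:regularized variational derivative and gradient}, the assumption \eqref{eq:quadratic entropy 2} reads
\[
\psi^{\epsilon_i}*\log\left(\psi^{\epsilon_i}*f_i\right) = Q_i(v) := \lambda_i^{(0)}-1 + m_i\lambda^{(1)}\cdot v + m_i\lambda^{(2)}\tfrac{|v|^2}{2}.
\]
Writing $g_i := \psi^{\epsilon_i}*f_i$, the task splits into two steps: solve $\psi^{\epsilon_i}*\log g_i = Q_i$ for $\log g_i$, and then recover $f_i$ from $g_i$ by deconvolution. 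The Gaussian form \eqref{eq:gaussian mollifier} is essential for both steps, which is why it was assumed; the argument follows the technique of Lemma 7 in \cite{single_species_Landau_particle_method} but can be organized entirely around the heat semigroup.

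First I would solve the convolution equation for $\log g_i$. Because $\psi^{\epsilon_i}$ is the Gaussian \eqref{eq:gaussian mollifier}, convolution with it is exactly the heat semigroup, $\psi^{\epsilon_i}*h = e^{\frac{\epsilon_i}{2}\Delta_v}h$, whose Fourier symbol $e^{-\epsilon_i|\xi|^2/2}$ never vanishes; hence the equation has a unique tempered solution, obtained by applying the inverse operator $e^{-\frac{\epsilon_i}{2}\Delta_v}$, which is well defined on polynomials. Since $Q_i$ is quadratic, $\Delta_v^2 Q_i=0$ and the series truncates,
\[
\log g_i = e^{-\frac{\epsilon_i}{2}\Delta_v}Q_i = Q_i - \tfrac{\epsilon_i}{2}\Delta_v Q_i = Q_i - \tfrac{\epsilon_i}{2}\, d\, m_i\lambda^{(2)},
\]
so $\log g_i$ is again quadratic and $g_i$ is a Gaussian. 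Completing the square identifies its mean as $u=-\lambda^{(1)}/\lambda^{(2)}$ and its isotropic variance as $\sigma_i^2 = -1/(m_i\lambda^{(2)})$, already matching the velocity formula in \eqref{eq:Equilibrium distribution particle number, relaxation velocity, and relaxation temperature}.

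Next I would recover $f_i$ by deconvolving $g_i=\psi^{\epsilon_i}*f_i$. In Fourier, $\widehat{f_i}=\widehat{g_i}/\widehat{\psi^{\epsilon_i}}$ is again Gaussian, with the variance of $\psi^{\epsilon_i}$ subtracted, so $f_i$ is the isotropic Gaussian with mean $u$ and variance $\sigma_i^2-\epsilon_i$. Setting $\tfrac{T_i}{m_i}=\sigma_i^2-\epsilon_i$ yields the stated $T_i = -(m_i\epsilon_i + 1/\lambda^{(2)})$ and puts $f_i$ in the Maxwellian form \eqref{eq:equilibrium regularized Landau}; the density $n_i$ in \eqref{eq:Equilibrium distribution particle number, relaxation velocity, and relaxation temperature} then comes from matching the constant term in $\log g_i$ against the normalization produced by the two Gaussian operations. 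That final matching (completing the square in the linear term and collecting the $\lambda_i^{(0)}$, the $d\,m_i\epsilon_i\lambda^{(2)}/2$, and the two $|\lambda^{(1)}|^2$ contributions) is routine but error-prone bookkeeping.

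The main obstacle is not the algebra but the two well-posedness points underlying the inversion. The first is justifying that the only admissible solution of $\psi^{\epsilon_i}*\log g_i=Q_i$ is the quadratic produced by the backward heat flow, which the nonvanishing Gaussian Fourier symbol guarantees within the tempered class. The second, and the genuinely constraining one, is ensuring the deconvolved variance $\sigma_i^2-\epsilon_i=T_i/m_i$ is strictly positive so that $f_i$ is a bona fide density; this forces $\lambda^{(2)}<0$ together with $-1/(m_i\lambda^{(2)})>\epsilon_i$. That positivity requirement is precisely the restriction tying the admissible equilibria to the regularization parameter $\epsilon_i$, and it is the source of the species-dependent equilibrium temperature that the paper later exploits when selecting the $\epsilon_i$.
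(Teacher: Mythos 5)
Your proposal is correct and arrives at exactly the stated formulas, but it executes the two inversions by a noticeably different (and cleaner) route than the paper. The paper's proof lives entirely in Fourier space: dividing by $\hat{\psi}^{\epsilon_i}$ turns the quadratic into a combination of $\delta$, $\nabla\delta$, and $\Delta\delta$, an inverse transform gives $\log(f_i*\psi^{\epsilon_i})$, and then a second, self-described ``lengthy calculation'' with Gaussian Fourier transforms undoes the outer convolution. You replace the first inversion with the backward heat operator $e^{-\frac{\epsilon_i}{2}\Delta_v}$, whose Taylor series truncates on quadratics since $\Delta_v^2 Q_i=0$, recovering the paper's intermediate identity $\log g_i = Q_i - \tfrac{\epsilon_i}{2}\,d\,m_i\lambda^{(2)}$ in one line; and you replace the second inversion with Gaussian variance arithmetic, $T_i/m_i=\sigma_i^2-\epsilon_i$ with $\sigma_i^2=-1/(m_i\lambda^{(2)})$, which reproduces $T_i=-(m_i\epsilon_i+1/\lambda^{(2)})$ without the lengthy computation. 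Your version also supplies two points the paper passes over silently: uniqueness of the tempered solution of $\psi^{\epsilon_i}*\log g_i=Q_i$ (via the nonvanishing Gaussian symbol and the support argument for $\widehat{\log g_i}$), and the admissibility constraints $\lambda^{(2)}<0$ and $-1/(m_i\lambda^{(2)})>\epsilon_i$ that make $f_i$ a genuine positive density --- constraints the paper's formulas, e.g.\ the factor $\left(-\frac{2\pi}{m_i\lambda^{(2)}}\right)^{d/2}$ and the positivity of $T_i$, presuppose without comment. Two small remarks: your $\lambda_i^{(0)}-1$ versus the paper's ``recycled'' $\lambda_i^{(0)}$ is a harmless relabeling of the free constant; and the constant-matching for $n_i$ that you defer as bookkeeping is actually immediate, since both Gaussian convolutions preserve total mass, so $n_i=\int_{\mathbb{R}^d} g_i\rd{v}=\left(-\frac{2\pi}{m_i\lambda^{(2)}}\right)^{\frac{d}{2}}\exp\left(\lambda_i^{(0)}-\frac{d m_i\epsilon_i\lambda^{(2)}}{2}-\frac{m_i|\lambda^{(1)}|^2}{2\lambda^{(2)}}\right)$, which agrees with the paper's expression because its two $|\lambda^{(1)}|^2$ terms combine as $-\frac{m_i^2\epsilon_i|\lambda^{(1)}|^2}{2(1+m_i\epsilon_i\lambda^{(2)})}-\frac{m_i|\lambda^{(1)}|^2}{2\lambda^{(2)}(1+m_i\epsilon_i\lambda^{(2)})}=-\frac{m_i|\lambda^{(1)}|^2}{2\lambda^{(2)}}$.
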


\begin{proof}
    Using \eqref{eq:regularized variational derivative and gradient} and \eqref{eq:quadratic entropy 2} we begin with 
    \begin{equation}        \psi^{\epsilon_i}*\log{\left(f_i*\psi^{\epsilon_i}\right)}(v) =        
        \lambda_i^{(0)} +m_i\lambda^{(1)} \cdot v + m_i\lambda^{(2)}\frac{|v|^2}{2} ,
    \end{equation}
    where $\lambda_i^{(0)}$ has been recycled to include the $+1$ term from \eqref{eq:regularized variational derivative and gradient}.  Taking the Fourier transform of both sides (the Fourier transform of $f$ is denoted as $\hat{f}$) yields
    \begin{equation}
        \left(\log{\left(f_i*\psi^{\epsilon_i} \right)}\right)^{\widehat{}}(y)  = \frac{1}{\hat{\psi}^{\epsilon_i}(y)}\left(\lambda_i^{(0)} \delta(y) + Im_i\lambda^{(1)}\cdot \nabla \delta(y) - m_i\frac{\lambda^{(2)}}{2}\Delta \delta(y)\right),
    \end{equation}
    where $I = \sqrt{-1}$ since $i$ is already used as an index and $\delta(y)$ is the Dirac delta distribution.  Taking the inverse Fourier transform of both sides we have
    \begin{equation}
        \log{\left(f_i*\psi^{\epsilon_i}(v)\right)} = \lambda_i^{(0)} - \frac{dm_i\epsilon_i\lambda^{(2)}}{2}  + m_i\lambda^{(1)}\cdot v+ m_i\frac{\lambda^{(2)}}{2} |v|^2,
    \end{equation}
    and taking the exponential of both sides gives
    \begin{equation}
        f_i*\psi^{\epsilon_i}(v) = \exp{\left(\lambda_i^{(0)}-\frac{dm_i \epsilon_i \lambda^{(2)}}{2}-\frac{m_i\left|\lambda^{(1)}\right|^2}{2\lambda^{(2)}}\right)}\exp{\left(\frac{m_i\lambda^{(2)}}{2}\left|v + \frac{\lambda^{(1)}}{\lambda^{(2)}}\right|^2 \right)}.
    \end{equation}
    Again taking the Fourier transform of both sides, a lengthy calculation yields 
    \begin{equation}
    \begin{aligned}
        \hat{f_i}(y) = \left(\frac{-1}{m_i\lambda^{(2)}} \right)^{\frac{d}{2}}
        &\exp{\left(\lambda_i^{(0)}-\frac{dm_i\epsilon_i\lambda^{(2)}}{2}-\frac{m_i^2\epsilon_i\left|\lambda^{(1)}\right|^2}{2\left(1+m_i\epsilon_i\lambda^{(2)}\right)}\right)}\\
        &\times\exp{\left(\frac{1+m_i\epsilon_i\lambda^{(2)}}{2m_i\lambda^{(2)}}\left|y+I\frac{m_i\lambda^{(1)}}{1+m_i\epsilon_i\lambda^{(2)}}\right|^2\right)}.
    \end{aligned}
    \end{equation}
    Finally, one last inverse Fourier transform shows that 
    \begin{equation}
    \begin{aligned}
        f_i(v) = \left(\frac{1}{1+m_i\epsilon_i\lambda^{(2)}} \right)^{\frac{d}{2}}&
        \exp{\left(\lambda_i^{(0)}-\frac{dm_i\epsilon_i\lambda^{(2)}}{2}- \frac{m_i^2\epsilon_i\left|\lambda^{(1)}\right|^2}{2\left(1+m_i\epsilon_i\lambda^{(2)}\right)}-\frac{m_i\left|\lambda^{(1)}\right|^2}{2\lambda^{(2)}\left(1+m_i\epsilon_i\lambda^{(2)}\right)}\right)}\\
        &\times
        \exp{\left(m_i\left(\frac{\lambda^{(2)}}{2(m_i\epsilon_i\lambda^{(2)}+1)}\right)\left| v+\frac{\lambda^{(1)}}{\lambda^{(2)}}\right|^2\right)},
    \end{aligned}
    \end{equation}
    which is in the form of (\ref{eq:equilibrium regularized Landau}) with (\ref{eq:Equilibrium distribution particle number, relaxation velocity, and relaxation temperature}).
\end{proof}
Upon comparison of the true equilibrium \eqref{eq:Landau equilibrium distribution} of the original Landau equation with \eqref{eq:equilibrium regularized Landau} of the regularized Landau equation, we find that the temperature in \eqref{eq:Landau equilibrium distribution} does not depend on species $i$ while the temperature in \eqref{eq:equilibrium regularized Landau} is generally species dependent.  This leads to a natural condition on the regularization parameter $\epsilon_i$ in order to guarantee a species independent equilibrium temperature.
\begin{corollary}\label{corollary: Universal temperature relaxation}
    If $m_i\epsilon_i = \epsilon$ for $i = 1,...,s$, the temperature of the Maxwellian function given in \eqref{eq:equilibrium regularized Landau} can be made species independent and is given by 
    \begin{equation*}
        T_i = T = -\left(\epsilon+ \frac{1}{\lambda^{(2)}}\right), \quad \mbox{for} \quad i = 1,...,s.
    \end{equation*}
\end{corollary}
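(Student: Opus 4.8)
The plan is to read the result off directly from the temperature formula already established in Proposition \ref{Prop:Maxwellian steady state}. There, the equilibrium temperature of species $i$ was computed in \eqref{eq:Equilibrium distribution particle number, relaxation velocity, and relaxation temperature} to be
$$T_i = -\left(m_i\epsilon_i + \frac{1}{\lambda^{(2)}}\right).$$
So the entire argument reduces to identifying which quantities in this expression carry a species index, and then imposing the stated balance condition.

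The one point worth making explicit is that $\lambda^{(2)}$ is a genuinely global constant, common to all species. This is not an extra assumption but a consequence of Proposition \ref{prop:steady state quadratic}: in the course of that proof one shows that $\lambda_{ij}^{(2)}(v,v)$ is independent of $j$ (equation \eqref{eq:lambda j dependence}), symmetric in $i$ and $j$ (equation \eqref{eq:lambda ij symmetry}), and constant in $v$, which together force a single scalar $\lambda^{(2)}$ to appear as the coefficient of $|v|^2/2$ for every species. Consequently the only place a species index can enter $T_i$ is through the product $m_i\epsilon_i$. I would state this observation first, since it is what makes the corollary meaningful: equalizing the single product $m_i\epsilon_i$ across species is exactly enough to equalize the temperatures.

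With that in hand, I would simply impose the hypothesis $m_i\epsilon_i = \epsilon$ for $i = 1,\dots,s$ and substitute, obtaining
$$T_i = -\left(\epsilon + \frac{1}{\lambda^{(2)}}\right),$$
which is manifestly independent of $i$ and hence equals the claimed common value $T$. There is essentially no obstacle here; the substantive content lives in the preceding two propositions, and this corollary is a one-line substitution once the species-independence of $\lambda^{(2)}$ is noted. The only care required is in articulating that last fact, so that the reader is convinced the balance $m_i\epsilon_i = \epsilon$ is both necessary and sufficient for a unified equilibrium temperature.
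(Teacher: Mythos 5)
Your proposal is correct and matches the paper's treatment: the corollary is stated without a separate proof precisely because it is the immediate substitution $m_i\epsilon_i = \epsilon$ into the temperature formula $T_i = -\left(m_i\epsilon_i + \frac{1}{\lambda^{(2)}}\right)$ from Proposition \ref{Prop:Maxwellian steady state}, with the species-independence of $\lambda^{(2)}$ already secured in Proposition \ref{prop:steady state quadratic}. Your explicit remark that $\lambda^{(2)}$ is a single global constant (so $m_i\epsilon_i$ is the only species-dependent term, making the condition both necessary and sufficient) is a faithful articulation of what the paper leaves implicit.
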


\section{Particle method for the multispecies Landau equation}
\label{sec:Particle method}

In this section, we present the particle method for the regularized multispecies Landau equation and show its conservation and entropy decay properties.  Recall that the Landau equation is written as a nonlinear transport equation \eqref{eq:regularied Landau transport} with the velocity field given by \eqref{eq: Landau equation velocity field}, and the particle solution is a linear combination of Dirac delta functions \eqref{eq:particle method approximate solution}, where the weights $w_p^i$ are fixed and the particle velocities satisfy the following system obtained by substituting \eqref{eq:particle method approximate solution} into \eqref{eq:particle method ODE system}:
\begin{equation}\label{eq:semi-discrete particle method} 
\begin{aligned}
    \frac{\rd{v^i_p(t)}}{\rd{t}} & = -\sum_{j=1}^s\Tilde{U}_{ji}(f^N_j,f^N_i)(v_p^i(t)) \\
    & = -\sum_{j=1}^s\sum_{q=1}^{N}w_q^jA_{ji}(v_p^i - v_q^j)\left(\frac{1}{m_i}\nabla \frac{\delta E_i^{\epsilon_i,N}}{\delta f_i}(v_p^i) - \frac{1}{m_j}\nabla\frac{\delta E_j^{\epsilon_j,N}}{\delta f_j}(v_q^j)\right).
\end{aligned}
\end{equation}
The regularized entropy functional, its variational derivative, and gradient of the variational derivative at the particle level are obtained by substituting \eqref{eq:particle method approximate solution} into \eqref{eq:regularized entropy} and \eqref{eq:regularized variational derivative and gradient}
\begin{align}
        E^{\epsilon_i,N}_{i} :&= \int_{\mathbb{R}^d}\sum_{p=1}^Nw_p^i\psi^{\epsilon_i}(v-v_p^i)\log{\left(\sum_{r=1}^Nw_r^i \psi^{\epsilon_i}(v - v_r^i) \right)}\rd{v}, \label{eq:semidiscrete entropy}\\
        \frac{\delta E_i^{{\epsilon_i,N}}}{\delta f_i^N}(v_p^i) &:=\int_{\mathbb{R}^d}\psi^{\epsilon_i}(v_p^i-v)\log{ \left(\sum_{r=1}^Nw_r^i\psi^{\epsilon_i}(v - v_r^i) \right)}\rd{v} + 1,\label{eq:semidiscrete variational derivative of entropy}\\
        \nabla\frac{\delta E_i^{{\epsilon_i,N}}}{\delta f_i^N}(v_p^i) &:=\int_{\mathbb{R}^d}\nabla\psi^{\epsilon_i}(v_p^i-v)\log{ \left(\sum_{r=1}^Nw_r^i\psi^{\epsilon_i}(v - v_r^i) \right)}\rd{v}\label{eq:semidiscrete gradient of the variational derivative of entropy}.
\end{align}
The macroscopic quantities: species number density, mass density, velocity and temperature are defined at the particle level as
\begin{equation}\label{eq:macroscopic quantities}
    n_i = \sum_{p = 1}^Nw_p^i,\quad \rho_i = m_in_i, \quad u_i = \frac{1}{n_i}\sum_{p = 1}^N w_p^iv_p^i, \quad T_i = \frac{m_i}{dn_i}\sum_{p=1}^N w_p^i|v_p^i-u_i|^2,
\end{equation} 
and the total number density, mass density, velocity, and temperature are 
\begin{equation}\label{eq:total macroscopic quantities }
    n = \sum_{i = 1}^s n_i, \quad \rho = \sum_{i=1}^s\rho_i, \quad u = \frac{1}{\rho}\sum_{i=1}^s\rho_iu_i, \quad  T = \frac{1}{dn}\sum_{i=1}^sm_i\sum_{p=1}^Nw_p^i|v_p^i - u|^2.
\end{equation}
\begin{proposition}\label{prop:semi-discrete particle method conservation}
At the semi-discrete (continuous in time) level, the particle method \eqref{eq:semi-discrete particle method} 
\begin{enumerate}
    \item conserves total mass, momentum, and  energy:
    $$\frac{\rd{}}{\rd{t}} \sum_{i=1}^s \sum_{p = 1}^N w_p^i\phi_i(v_p^i) = 0, \quad \phi_i(v) = 1, m_iv, m_i|v|^2.$$
    \item decays total regularized entropy:
    \begin{equation*}
    \begin{aligned}
        \frac{\rd}{\rd t}\sum_{i=1}^s E_i^{\epsilon_i, N} 
                        = -\frac{1}{2}\sum_{i,j=1}^s\sum_{p,q = 1}^N &w_p^iw^j_q \left(\nabla\frac{\delta E^{{\epsilon_i,N}}_i}{\delta f^N_i}(v_p^i)-\frac{m_i}{m_j}\nabla\frac{\delta E^{\epsilon_j,N}_j}{\delta f^N_j}(v_q^j)\right)\\
        & \cdot A_{ji}(v^i_p-v^j_q)\left(\frac{1}{m_i}\nabla\frac{\delta E^{\epsilon_i,N}_i}{\delta f_i}(v_p^i) - \frac{1}{m_j}\nabla\frac{\delta E^{\epsilon_j,N}_j}{\delta f_j}(v_q^j) \right) \leq 0.
        \end{aligned}
        \end{equation*}
\end{enumerate}
\begin{proof}
    \begin{enumerate}
        \item    \begin{equation*}
            \begin{aligned}
               & \frac{\rd}{\rd t} \sum_{i=1}^s\sum_{p = 1}^N w_p^i\phi_i(v_p^i) = \sum_{i=1}^s\sum_{p = 1}^N w_p^i \nabla \phi_i(v_p^i) \cdot \frac{\rd{v_p^i}}{\rd{t}} \\
                =& -\sum_{i,j=1}^s\sum_{p,q = 1}^N\  w_p^iw^j_q \nabla \phi_i(v_p^i) \cdot A_{ji}(v^i_p-v^j_q)\left(\frac{1}{m_i}\nabla\frac{\delta E^{\epsilon_i,N}_i}{\delta f_i}(v_p^i) - \frac{1}{m_j}\nabla\frac{\delta E^{\epsilon_j,N}_j}{\delta f_j}(v_q^j) \right)\\
                = &\sum_{i,j=1}^s\sum_{p,q = 1}^N\ \frac{m_i}{m_j}w_p^iw^j_q \nabla \phi_j(v_q^j)
                \cdot A_{ji}(v^i_p-v^j_q)\left(\frac{1}{m_i}\nabla\frac{\delta E^{\epsilon_i,N}_i}{\delta f_i}(v_p^i) - \frac{1}{m_j}\nabla\frac{\delta E^{\epsilon_j,N}_j}{\delta f_j}(v_q^j) \right)\\
                    =& -\frac{1}{2}\sum_{i,j=1}^s\sum_{p,q = 1}^N w_p^iw^j_q \left(\nabla \phi_i(v_p^i)-\frac{m_i}{m_j}\nabla \phi_j(v_q^j) \right)\cdot A_{ji}(v^i_p-v^j_q)\left(\frac{1}{m_i}\nabla\frac{\delta E^{\epsilon_i,N}_i}{\delta f_i}(v_p^i) - \frac{1}{m_j}\nabla\frac{\delta E^{\epsilon_j,N}_j}{\delta f_j}(v_q^j) \right).
                \end{aligned}
            \end{equation*}
        The second to last equality comes from switching the indices $i$ and $j$, and $p$ and $q$, and using $A_{ij}=\frac{m_i}{m_j}A_{ji}$. The last equality comes from the familiar process of averaging the third and fourth line.  Total conservation of momentum and energy ($\phi_i(v) = m_iv$ and $\phi_i(v) = m_iv^2$) are achieved because $A_{ji}(z)z = 0$.  For total conservation of mass $(\phi_i(v) = 1)$, only the second line is necessary.
        \item 
        \begin{equation*}
            \begin{aligned}
            \frac{\rd{}}{\rd{t}} \sum_{i = 1}^s E^{\epsilon_i,N}_{i} &= \frac{\rd}{\rd{t}}\sum_{i=1}^s\int_{\mathbb{R}^d}\sum_{p=1}^Nw_p^i\psi^{\epsilon_i}(v-v_p^i)\log{\left(\sum_{r=1}^Nw_r^i \psi^{\epsilon_i}(v - v_r^i) \right)}\rd{v}\\
            & = \sum_{i=1}^{s}\int_{\mathbb{R}^d}\sum_{p=1}^Nw_p^i\nabla \psi^{\epsilon_i}(v_p^i-v)\cdot\frac{\rd{v_p^i}}{\rd{t}}\log{\left(\sum_{r=1}^Nw_r^i \psi^{\epsilon_i}(v - v_r^i) \right)}\rd{v}\\
            & \quad + \sum_{i=1}^s\int_{\mathbb{R}^d}\sum_{r=1}^Nw_r^i\nabla \psi^{\epsilon_i}(v_r^i-v)\cdot\frac{\rd{v_r^i}}{\rd{t}} =: I_1 + I_2.
            \end{aligned} 
        \end{equation*}
        First, we consider $I_2$
        \begin{equation*}
            I_2 = \sum_{i=1}^s\int_{\mathbb{R}^d}\sum_{r=1}^Nw_r^i\nabla \psi^{\epsilon_i}(v_r^i-v)\cdot\frac{\rd{v_r^i}}{\rd{t}} = \frac{\rd}{\rd{t}}\sum_{i=1}^s\sum_{r=1}^Nw_r^i \int_{\mathbb{R}^d}\psi^{\epsilon_i}(v-v_r^i)\rd{v} = 0,
        \end{equation*}
        since $\int_{\mathbb{R}^d}\psi^{\epsilon_i}(v-v_r^i)\rd{v} = 1$.  Using \eqref{eq:semidiscrete gradient of the variational derivative of entropy}, $I_1$ can be written as
        \begin{align*}
            I_1 &= \sum_{i=1}^{s}\sum_{p=1}^Nw_p^i\left(\int_{\mathbb{R}^d}\nabla \psi^{\epsilon_i}(v_p^i-v)\log{\left(\sum_{r=1}^Nw_r^i \psi^{\epsilon_i}(v - v_r^i) \right)}\rd{v}\right) \cdot \frac{\rd{v_p^i}}{\rd{t}} \\
            &= \sum_{i=1}^s\sum_{p=1}^Nw_p^i\left(\nabla\frac{\delta E^{{\epsilon_i,N}}_i}{\delta f^N_i}(v_p^i)\right) \cdot \frac{\rd{v_p^i}}{\rd{t}}\\
            & = -\sum_{i,j=1}^s\sum_{p,q=1}^Nw_p^iw_q^j\left(\nabla\frac{\delta E^{{\epsilon_i,N}}_i}{\delta f^N_i}(v_p^i)\right) \cdot A_{ji}(v_p^i - v_q^j)\left(\frac{1}{m_i}\nabla \frac{\delta E_i^{{\epsilon_i,N}}}{\delta f_i}(v_p^i) - \frac{1}{m_j}\nabla\frac{\delta E_j^{\epsilon_j,N}}{\delta f_j}(v_q^j)\right)\\
            &
            \begin{aligned}
                = -\frac{1}{2}\sum_{i,j=1}^s\sum_{p,q=1}^N
                &w_p^iw_q^j\left(\nabla\frac{\delta E^{{\epsilon_i,N}}_i}{\delta f^N_i}(v_p^i)-\frac{m_i}{m_j}\nabla\frac{\delta E^{\epsilon_j,N}_j}{\delta f^N_j}(v_q^j)\right)\\
                &\cdot A_{ji}(v_p^i - v_q^j) \left(\frac{1}{m_i}\nabla \frac{\delta E_i^{\epsilon_i,N}}{\delta f_i}(v_p^i) - \frac{1}{m_j}\nabla\frac{\delta E_j^{\epsilon_j,N}}{\delta f_j}(v_q^j)\right)\leq 0,
                \end{aligned}
        \end{align*}
        since $A_{ji}$ is positive semidefinite. 
    \end{enumerate}
\end{proof}
\end{proposition}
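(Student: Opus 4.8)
The plan is to carry out at the particle level the same two computations that established Proposition~\ref{prop:regularized conservation and entropy decay} in the continuous setting, replacing the integrals against the regularized operators by the explicit finite sums appearing in the ODE system~\eqref{eq:semi-discrete particle method}. The two structural facts doing all the work are the kernel symmetry $A_{ij}(z)=\frac{m_i}{m_j}A_{ji}(z)$ together with $A_{ji}(z)z=0$, and the positive semidefiniteness of $A_{ji}$.

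For part~(1), I would begin with the chain rule,
\[
\frac{\rd}{\rd t}\sum_{i=1}^s\sum_{p=1}^N w_p^i\phi_i(v_p^i)=\sum_{i=1}^s\sum_{p=1}^N w_p^i\,\nabla\phi_i(v_p^i)\cdot\frac{\rd v_p^i}{\rd t},
\]
and substitute~\eqref{eq:semi-discrete particle method} for $\rd v_p^i/\rd t$, producing a double sum over species pairs $(i,j)$ and particle pairs $(p,q)$. Relabeling $(i,p)\leftrightarrow(j,q)$ and invoking $A_{ij}=\frac{m_i}{m_j}A_{ji}$ then averages the expression into the symmetric form carrying the factor $\bigl(\nabla\phi_i(v_p^i)-\frac{m_i}{m_j}\nabla\phi_j(v_q^j)\bigr)$, exactly mirroring~\eqref{eq:Landau operator averaged weak form}. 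For $\phi_i=1$ the summand already vanishes at the first (unsymmetrized) step since $\nabla\phi_i=0$. For momentum, $\phi_i=m_iv$ gives $\nabla\phi_i(v_p^i)-\frac{m_i}{m_j}\nabla\phi_j(v_q^j)=m_iI_d-m_iI_d=0$; for energy, $\phi_i=m_i|v|^2$ gives $2m_i(v_p^i-v_q^j)$, which is annihilated by $A_{ji}(v_p^i-v_q^j)$ because $A_{ji}(z)z=0$. Each case therefore yields zero.

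For part~(2), I would differentiate the discrete entropy~\eqref{eq:semidiscrete entropy} in time. The product rule splits the result into a term $I_1$ where the derivative falls on the leading mollifier factor and a term $I_2$ where it falls on the logarithm. The crucial observation is that in $I_2$ the prefactor $\sum_p w_p^i\psi^{\epsilon_i}(v-v_p^i)$ is precisely the argument inside the logarithm, so the chain-rule denominator cancels it, leaving $I_2=\frac{\rd}{\rd t}\sum_{i}\sum_r w_r^i\int_{\mathbb{R}^d}\psi^{\epsilon_i}(v-v_r^i)\rd v=0$ by the normalization in~\eqref{eq:mollifier function}. For $I_1$, the inner integral is exactly $\nabla\frac{\delta E_i^{\epsilon_i,N}}{\delta f_i^N}(v_p^i)$ from~\eqref{eq:semidiscrete gradient of the variational derivative of entropy}, so $I_1=\sum_i\sum_p w_p^i\,\nabla\frac{\delta E_i^{\epsilon_i,N}}{\delta f_i^N}(v_p^i)\cdot\frac{\rd v_p^i}{\rd t}$. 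Substituting~\eqref{eq:semi-discrete particle method} and symmetrizing just as in part~(1) produces the stated quadratic form, which is $\le 0$ since $A_{ji}$ is positive semidefinite.

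The routine part is the index bookkeeping in the symmetrization; the one genuinely delicate point is the vanishing of $I_2$. It requires tracking the sign from differentiating $\psi^{\epsilon_i}(v-v_p^i)$ with respect to $v_p^i$ (using evenness of $\psi$, hence oddness of $\nabla\psi$, to match the $\nabla\psi^{\epsilon_i}(v_p^i-v)$ convention in the displayed formulas) and recognizing the cancellation of the leading factor against the logarithm's denominator. Once that cancellation is seen, the argument reduces to the same algebra as the continuous Proposition~\ref{prop:regularized conservation and entropy decay}.
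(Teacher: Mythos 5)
Your proposal is correct and follows essentially the same route as the paper's proof: the chain rule plus substitution of the ODE system, symmetrization via the relabeling $(i,p)\leftrightarrow(j,q)$ with $A_{ij}=\frac{m_i}{m_j}A_{ji}$ and $A_{ji}(z)z=0$ for part (1), and the $I_1+I_2$ splitting with the cancellation of the mollifier sum against the logarithm's denominator for part (2). Your observations that the momentum factor $m_iI_d-\frac{m_i}{m_j}m_jI_d$ vanishes identically and that the sign convention in $\nabla\psi^{\epsilon_i}(v_p^i-v)$ rests on the evenness of $\psi$ are finer-grained than the paper's wording but match its computation exactly.
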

\subsection{Initialization and mesh}\label{subsec:Initialization and mesh}

To initialize the particle method, we consider square computational domains and without loss of generality we assume they are centered at the origin. Specifically, for each species $i$, we choose the domain as $[-L_i,L_i]^d$, $L_i>0$. The interval $[-L_i,L_i]$ is divided into $n$ equally spaced subintervals with length $h_i = 2L_i/n$. Using these subdivisions, the $[-L_i,L_i]^d$ is divided into $n^d = N$ elements with uniform size $h_i^d$. 

For the initial condition $f_i(0,v) = f_i^{0}(v)$, we approximate it as
\begin{equation}\label{eq:initial condition}
    f_i^N(0,v) = \sum_{p=1}^Nw_p^i\delta(v-v_p^i(0)),\quad v_p^i(0) = v_{h_i},\quad w_p^i = h_i^df_i^0(v_{h_i}),
\end{equation}
where $v_{h_i}$ denotes the center of each element, and a midpoint quadrature is used to approximate the weight in each element.

Furthermore, the midpoint rule is also used to approximate the integrals in (\ref{eq:semidiscrete entropy}) and in (\ref{eq:semidiscrete gradient of the variational derivative of entropy}), that is,
\begin{equation}
\nabla\frac{\delta E_i^{{\epsilon_i,N}}}{\delta f_i^N}(v_p^i)\approx h_i^d \sum_{h_i}\nabla\psi^{\epsilon_i}(v_p^i-v_{h_i})\log{ \left(\sum_{r=1}^Nw_r^i\psi^{\epsilon_i}(v_{h_i} - v_r^i) \right)}:=F_i^{\epsilon_i,N}(v_p^i),
\end{equation}
\begin{equation}\label{eq:discrete-in-velocity entropy}
    E^{{\epsilon_i,N}}_i  \approx h_i^d\sum_{h_i}\sum_{p=1}^Nw^i_p\psi^{\epsilon_i}(v_{h_i} - v_p^i)\log{\left(\sum_{r=1}^{N}w^i_r\psi^{\epsilon_i}(v_{h_i}-v_r^i)\right)}.
\end{equation}
The resulting particle method then reads
\begin{equation}\label{eq:discrete-in-velocity particle method} 
    \frac{\rd{v^i_p(t)}}{\rd{t}}  = -\sum_{j=1}^s\sum_{q=1}^{N}w_q^jA_{ji}(v_p^i - v_q^j)\left(\frac{1}{m_i}F_i^{\epsilon_i,N}(v_p^i) - \frac{1}{m_j}F_j^{\epsilon_j,N}(v_q^j)\right).
\end{equation}
One can show that this method still conserves mass, momentum, and energy. The entropy decays almost in time with $O(h^2)$ error. These properties can be shown with a similar technique used for Proposition \ref{prop:semi-discrete particle method conservation} along with the fact that the midpoint rule is a second order accurate method. We omit the detail.

Finally, given $\{v_p^i(t)\}$, in order to reconstruct a regularized solution, we can convolve
the particle solution \eqref{eq:particle method approximate solution} with the mollifier \eqref{eq:mollifier function},
\begin{equation}\label{eq:blob solution}
    \tilde{f}^{N}_i(t,v) := (\psi^{\epsilon_i}*f^N_i)(t,v) = \sum_{p=1}^Nw_p^i\psi^{\epsilon_i}(v-v_p^i(t)).
\end{equation}

\subsection{Time discretization}
\label{subsec:time}
The particle velocities at time $t$ are obtained by solving the system of ODEs \eqref{eq:discrete-in-velocity particle method}.  In \cite{single_species_Landau_particle_method}, it is shown that the forward Euler method conserves mass and momentum exactly, while energy is conserved up to $O(\Delta t)$.  The same can be said about the forward Euler method for the multi-species case regarding total mass, momentum and energy while also providing a simple, relatively low cost time update calculation.  In \cite{Hirvijoki_2021} and \cite{ZPH2022} an implicit, first order method is presented to conserve mass, momentum, and energy.  Another option is to use the second order implicit midpoint method, which also conserves mass, momentum, and energy exactly.  Conservation of mass is guaranteed regardless of what time integration method is used, as the particle weights remain constant in time.  We now explore the conservation properties for the forward Euler and implicit midpoint methods on \eqref{eq:discrete-in-velocity particle method}.
\begin{proposition}\label{prop:Forward Euler}
    The forward Euler method 
\begin{equation}\label{eq:Forward Euler}
        \frac{1}{\Delta t}\left(v_p^{i,n+1} - v_p^{i,n} \right) 
        =
        - \sum_{j=1}^s\sum_{q=1}^N w_q^j A_{ji}(v_p^{i,n}-v^{j,n}_q)\left(\frac{1}{m_i}F_i^{{\epsilon_i,N}}(v_p^{i,n}) - \frac{1}{m_j}F_j^{\epsilon_j,N}(v_q^{j,n}) \right),
\end{equation}
    conserves total momentum.
\end{proposition}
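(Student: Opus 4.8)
The plan is to imitate the semi-discrete momentum computation in the first part of Proposition~\ref{prop:semi-discrete particle method conservation}, exploiting the single fact about forward Euler that we actually need: the total-momentum functional is \emph{linear} in the particle velocities. Define the total momentum at step $n$ by $P^n := \sum_{i=1}^s\sum_{p=1}^N w_p^i m_i v_p^{i,n}$. Since the weights $w_p^i$ and masses $m_i$ are constant in time, subtracting consecutive steps and inserting the update \eqref{eq:Forward Euler} gives
\[
P^{n+1}-P^n = \Delta t\sum_{i=1}^s\sum_{p=1}^N w_p^i m_i\,\tfrac{1}{\Delta t}\bigl(v_p^{i,n+1}-v_p^{i,n}\bigr) = -\Delta t\, S,
\]
where
\[
S:=\sum_{i,j=1}^s\sum_{p,q=1}^N w_p^i w_q^j\, m_i\, A_{ji}(v_p^{i,n}-v_q^{j,n})\Bigl(\tfrac{1}{m_i}F_i^{\epsilon_i,N}(v_p^{i,n})-\tfrac{1}{m_j}F_j^{\epsilon_j,N}(v_q^{j,n})\Bigr).
\]
It then remains only to show $S=0$.

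Next I would symmetrize $S$. Writing $G_{pq}^{ij}:=\tfrac{1}{m_i}F_i^{\epsilon_i,N}(v_p^{i,n})-\tfrac{1}{m_j}F_j^{\epsilon_j,N}(v_q^{j,n})$, note the antisymmetry $G_{qp}^{ji}=-G_{pq}^{ij}$. Relabelling the summation indices $i\leftrightarrow j$ and $p\leftrightarrow q$ in $S$ and using the kernel identity $A_{ij}(z)=\tfrac{m_i}{m_j}A_{ji}(z)$ together with the evenness $A_{ji}(-z)=A_{ji}(z)$ (so the swapped argument $v_q^{j,n}-v_p^{i,n}$ contributes the same matrix) converts the prefactor $m_j A_{ij}$ into $m_j\cdot\tfrac{m_i}{m_j}A_{ji}=m_i A_{ji}$ while flipping the sign of the bracket, yielding $S=-S$ and hence $S=0$. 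Equivalently, averaging the original and relabelled expressions reproduces exactly the symmetric form from Proposition~\ref{prop:semi-discrete particle method conservation}, in which the gradient combination is $\nabla\phi_i(v_p^i)-\tfrac{m_i}{m_j}\nabla\phi_j(v_q^j)$; for $\phi_i(v)=m_i v$ this is $m_i-\tfrac{m_i}{m_j}m_j=0$, so $S$ vanishes identically. Note that, unlike the energy case, this does not even require the property $A_{ji}(z)z=0$.

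There is no genuine obstacle here: the argument is purely algebraic and the cancellation is exact. The point worth emphasizing — and precisely where a naive attempt to extend the statement to energy would break — is that momentum conservation survives the explicit time stepping \emph{because} the momentum functional is linear, so $P^{n+1}-P^n$ equals $\Delta t$ times the vanishing discrete collision sum with no remainder. For the quadratic energy functional one would instead expand $|v_p^{i,n+1}|^2-|v_p^{i,n}|^2=(v_p^{i,n+1}-v_p^{i,n})\cdot(v_p^{i,n+1}+v_p^{i,n})$; the cross term $2(v_p^{i,n+1}-v_p^{i,n})\cdot v_p^{i,n}$ cancels by the same symmetrization (now invoking $A_{ji}(z)z=0$), but the leftover term of size $\Delta t^2|\mathrm{RHS}|^2$ does not, which is exactly why forward Euler conserves momentum exactly yet energy only up to $O(\Delta t)$.
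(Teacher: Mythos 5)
Your proof is correct and follows essentially the same route as the paper: multiply the update by $m_i w_p^i$, sum over $i,p$, and kill the resulting collision sum by the index swap $(i,p)\leftrightarrow(j,q)$ together with $A_{ij}(z)=\frac{m_i}{m_j}A_{ji}(z)$ and the evenness of the kernel, giving $S=-S=0$. Your added observations --- that linearity of the momentum functional is what makes the explicit scheme exact here, and that $A_{ji}(z)z=0$ is not needed for momentum (the bracket $m_i-\frac{m_i}{m_j}m_j$ already vanishes) --- are accurate refinements of the paper's terser argument, not a different method.
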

\begin{proof}
    Multiplying both sides of \eqref{eq:Forward Euler} by $m_iw_p^i$ and summing in $i$ and $p$ gives
    \begin{equation*}
\begin{aligned}
    &\frac{1}{\Delta t} \sum_{i = 1}^{s} \sum_{p = 1}^{N}\left(m_iw_p^iv_p^{i,n+1} - m_iw_p^{i} v_p^{i,n}\right)\\
    & \quad =
    - \sum_{i,j=1}^s\sum_{p,q=1}^N w_p^iw_q^j m_i A_{ji}(v_p^{i,n}-v^{j,n}_q)\left(\frac{1}{m_i}F^{{\epsilon_i,N}}_i(v_p^{i,n}) - \frac{1}{m_j}F_j^{\epsilon_j,N}(v_q^{j,n}) \right)\\
    & \quad =
    \sum_{i,j=1}^s\sum_{p,q=1}^N w_p^iw_q^j m_i A_{ji}(v_p^{i,n}-v^{j,n}_q)\left(\frac{1}{m_i} F_i^{{\epsilon_i,N}}(v_p^{i,n}) - \frac{1}{m_j}F_j^{\epsilon_j,N}(v_q^{j,n}) \right) = 0.\\ 
\end{aligned}
\end{equation*}
\end{proof}
\begin{proposition}
    The implicit midpoint method
\begin{equation}\label{eq:implicit midpoint}
        \frac{1}{\Delta t}\left(v_p^{i,n+1} - v_p^{i,n} \right) 
        =
        - \sum_{j=1}^s\sum_{q=1}^N w_q^j A_{ji}\left(v_p^{i,n+\frac{1}{2}} - v_q^{j,n+\frac{1}{2}} \right)\\
        \left(\frac{1}{m_i}F_i^{{\epsilon_i,N}}\left(v_p^{i,n+\frac{1}{2}}\right) - \frac{1}{m_j}F_j^{\epsilon_j,N}\left(v_q^{j,n+\frac{1}{2}}\right) \right),
\end{equation}
where $v_p^{i,n+\frac{1}{2}} = \frac{1}{2}\left(v_p^{i,n+1} + v_p^{i,n} \right)$, conserves total momentum and energy.
\end{proposition}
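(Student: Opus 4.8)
The plan is to handle total momentum and total energy separately, in each case reducing the fully discrete midpoint scheme \eqref{eq:implicit midpoint} to the same antisymmetric double sum that already appeared in the semi-discrete Proposition \ref{prop:semi-discrete particle method conservation}, but with every argument now frozen at the midpoint state $v_p^{i,n+1/2}$.

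For total momentum I would proceed exactly as in the proof of Proposition \ref{prop:Forward Euler}: multiply \eqref{eq:implicit midpoint} by $m_i w_p^i$ and sum over $i$ and $p$. The left-hand side telescopes to $\frac{1}{\Delta t}\sum_{i,p}m_i w_p^i(v_p^{i,n+1}-v_p^{i,n})$, i.e.\ $\frac{1}{\Delta t}$ times the change in total momentum. The right-hand side is a double sum over $(i,p)$ and $(j,q)$ of the term $w_p^i w_q^j m_i A_{ji}(v_p^{i,n+1/2}-v_q^{j,n+1/2})\big(\frac{1}{m_i}F_i^{\epsilon_i,N}(v_p^{i,n+1/2}) - \frac{1}{m_j}F_j^{\epsilon_j,N}(v_q^{j,n+1/2})\big)$. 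Relabeling $i\leftrightarrow j$, $p\leftrightarrow q$ and using $A_{ij}=\frac{m_i}{m_j}A_{ji}$ together with the evenness $A_{ji}(-z)=A_{ji}(z)$ shows this sum equals its own negative, hence it vanishes. Because the arguments are now the midpoints rather than the time-$n$ values, the algebra is verbatim the forward Euler case; no extra structure is needed for momentum.

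The substantive part is energy, and the plan hinges on the midpoint telescoping identity $|v_p^{i,n+1}|^2 - |v_p^{i,n}|^2 = (v_p^{i,n+1}+v_p^{i,n})\cdot(v_p^{i,n+1}-v_p^{i,n}) = 2\,v_p^{i,n+1/2}\cdot(v_p^{i,n+1}-v_p^{i,n})$. I would dot \eqref{eq:implicit midpoint} with $2 m_i w_p^i v_p^{i,n+1/2}$ and sum over $i,p$. By this identity the left-hand side becomes $\frac{1}{\Delta t}\sum_{i,p}m_i w_p^i(|v_p^{i,n+1}|^2-|v_p^{i,n}|^2)$, exactly the change in total energy. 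The right-hand side is then symmetrized by the same $i\leftrightarrow j$, $p\leftrightarrow q$ swap, producing the familiar factor $\big(\nabla\phi_i - \frac{m_i}{m_j}\nabla\phi_j\big)$ with $\phi_i=m_i|v|^2$ evaluated at the midpoints, namely $2m_i v_p^{i,n+1/2} - \frac{m_i}{m_j}2m_j v_q^{j,n+1/2} = 2m_i(v_p^{i,n+1/2}-v_q^{j,n+1/2})$. Contracting this against $A_{ji}(v_p^{i,n+1/2}-v_q^{j,n+1/2})$ gives zero, since $A_{ji}(z)z=0$ and $A_{ji}$ is symmetric, so the total energy increment vanishes.

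The main obstacle is conceptual rather than computational: one must see why the midpoint rule yields \emph{exact} energy conservation where forward Euler loses it at $O(\Delta t)$. The reason is that $v_p^{i,n+1/2}$ plays a double role — it is simultaneously the vector that converts the discrete energy difference into an inner product with the velocity increment, and the argument inside $A_{ji}$ in the collision term. This alignment is precisely what lets the kernel's null-space property $A_{ji}(z)z=0$ annihilate the relative-velocity difference $v_p^{i,n+1/2}-v_q^{j,n+1/2}$. In forward Euler the telescoping still introduces $v^{n+1/2}$, but $A_{ji}$ is evaluated at $v^{n}$, so the two arguments no longer coincide and the cancellation fails. Once this alignment is recognized, the remaining steps parallel the semi-discrete proof exactly, with discrete sums replacing the time derivative.
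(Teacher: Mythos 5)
Your proposal is correct and follows essentially the same route as the paper: momentum via the identical index-swap antisymmetrization used for forward Euler, and energy by dotting \eqref{eq:implicit midpoint} with $2m_i w_p^i v_p^{i,n+\frac{1}{2}}$, using $\left|v_p^{i,n+1}\right|^2-\left|v_p^{i,n}\right|^2 = 2\,v_p^{i,n+\frac{1}{2}}\cdot\left(v_p^{i,n+1}-v_p^{i,n}\right)$, symmetrizing, and annihilating the resulting midpoint relative velocity with $A_{ji}(z)z=0$. Your closing observation---that exact conservation comes from the kernel argument and the telescoping vector both being evaluated at the same midpoint state---is precisely the mechanism the paper's computation exploits.
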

\begin{proof}
The proof for total conservation of momentum is nearly identical to the proof in Proposition \ref{prop:Forward Euler}.  For total conservation of energy, dot both sides of \eqref{eq:implicit midpoint} by $2m_iw_p^i v_p^{i,n+\frac{1}{2}}$ and sum in $i$ and $p$
\begin{equation*}
    \begin{aligned}
        &\frac{1}{\Delta t} \sum_{i = 1}^{s} \sum_{p = 1}^{N}m_iw_p^i\left(\left|v_p^{i,n+1}\right|^2 - \left|v_p^{i,n}\right|^2 \right)\\  
        &=
        - 2\sum_{i,j=1}^s\sum_{p,q=1}^N  w_p^iw_q^j m_iv_p^{i,n+\frac{1}{2}} \cdot A_{ji}\left(v_p^{i,n+\frac{1}{2}} - v_q^{j,n+\frac{1}{2}}\right)
        \left(\frac{1}{m_i}F_i^{{\epsilon_i,N}}\left(v_p^{i,n+\frac{1}{2}}\right) - \frac{1}{m_j}F_j^{\epsilon_j,N}\left(v_q^{j,n+\frac{1}{2}}\right) \right)\\
        %
        %
        %
        %
         %
        & =
        2\sum_{i,j=1}^s\sum_{p,q=1}^N w_p^iw_q^jm_i v_q^{j,n+\frac{1}{2}} \cdot A_{ji}\left(v_p^{i,n+\frac{1}{2}}-v_q^{j,n+\frac{1}{2}}\right)
        \left(\frac{1}{m_i}F_i^{{\epsilon_i,N}}\left(v_p^{i,n+\frac{1}{2}}\right) - \frac{1}{m_j}F_j^{\epsilon_j,N}\left(v_q^{j,n+\frac{1}{2}}\right) \right)\\
        &=
        -\sum_{i,j=1}^s\sum_{p,q=1}^N w_p^iw_q^jm_i (v_p^{i,n+\frac{1}{2}} -v_q^{j,n+\frac{1}{2}})\cdot A_{ji}\left(v_p^{i,n+\frac{1}{2}}-v_q^{j,n+\frac{1}{2}}\right)
        \left(\frac{1}{m_i}F_i^{{\epsilon_i,N}}\left(v_p^{i,n+\frac{1}{2}}\right) - \frac{1}{m_j}F_j^{\epsilon_j,N}\left(v_q^{j,n+\frac{1}{2}}\right) \right) = 0.
    \end{aligned}
\end{equation*}
\end{proof}

\section{Numerical examples}
\label{sec:numerical examples}
In this section, we present several numerical examples to validate the theoretical results from Sections \ref{sec:regularized landau} and \ref{sec:Particle method}.  All examples are two dimensional with two species. The computational domain and initialization are chosen according to Subsection \ref{subsec:Initialization and mesh}. The Gaussian mollifier (\ref{eq:gaussian mollifier}) is used in all examples so all the propositions in Section \ref{sec:regularized landau} will hold. The regularization parameter $\epsilon_i$ is chosen as $\epsilon_i = 0.64h_i^{1.98}$.  This choice is motivated by its success in \cite{single_species_Landau_particle_method}. The discrete entropy and macroscopic quantities, whenever needed, are calculated using \eqref{eq:discrete-in-velocity entropy} and \eqref{eq:macroscopic quantities}-\eqref{eq:total macroscopic quantities }.

The first three examples compare the particle solution to a BKW solution, which is an exact solution to \eqref{eq:homogenous Landau equation} in the Maxwell collision case.  We use the name BKW following its counterpart for the multispecies Boltzmann equation \cite{KW1977}. For the multispecies Landau equation, we are not aware of any such solutions existing in the literature. Hence we construct an exact solution from scratch (see Appendix for details). To summarize, assuming the kernel 
\begin{equation}\label{eq:numerical examples kernel}
    A_{ji}(z) = B_{ij}(|z|^2I_d-z\otimes z),
\end{equation}
the BKW solution has the form
\begin{equation}\label{eq:bkw exact solution}
    f_i(t,v) = n_i\left(\frac{m_i}{2\pi K}\right)^{\frac{d}{2}}\exp{\left(-\frac{m_i|v|^2}{2K} \right)}\left(1 - d\frac{1-K}{2K} + \frac{m_i}{K}\frac{1-K}{2K}|v|^2\right), \quad K = 1-C\exp{(-2\beta (d-1)t)},
\end{equation}
where $\sum_{j=1}^2 \frac{B_{ij}}{m_im_j}n_j := \beta_i$ and $\beta_1 = \beta_2 = \beta$. 
 In all three BKW solution examples, we take $C = 1/2$ and $\beta = 1/16$ and $n_1 = n_2 = 1$. The mass ratio and matrix $B_{ij}$ are given at the beginning of each example. The initial time is set as $t = 0$ and the final time is set to $t = 5$.    
 To compare the particle solution to the BKW solution, we use the reconstructed solution (\ref{eq:blob solution}), and define the $L^p$ and $L^{\infty}$ errors as
\begin{equation*}
    \|\tilde{f}^{N}_i - f_i\|^p_{L^p} = \sum_{h_i}h_i^d|\tilde{f}_i^N(v_{h_i}) - f_i(v_{h_i})|^p, \quad \|\tilde{f}^{N}_i - f_i\|_{L^{\infty}} = \max_{h_i}|\tilde{f}_i^N(v_{h_i}) - f_i(v_{h_i})|.
\end{equation*}
The BKW solution examples highlight several important features of the particle method for the multi-species Landau equation \eqref{eq:homogenous Landau equation}.  In Example \ref{Example:BKW Implicit mid} and Example \ref{Example:BKW mass rato 20}, a convergence study validates that the particle method is second order accurate in space.  Example \ref{Example:BKW Implicit mid} compares the effects of using the forward Euler method \eqref{eq:Forward Euler} to the implicit midpoint method \eqref{eq:implicit midpoint} to approximate the system of ODEs \eqref{eq:discrete-in-velocity particle method}. 
 Example \ref{Example:BKW mass rato 20} compares the results of using the same computational domain sizes for each species to the results of using different computational domain sizes for each species.  Specifically, we see a better order of accuracy using different computational domains for each species.  Example \ref{Example:BKW mass ratio 100} shows the particle method's ability to approximate a problem with a large mass ratio.  


The last two examples are Coulomb collision examples, where the kernel is given by
\begin{equation} 
    A_{ji}(z) = B_{ij}\frac{1}{|z|^3}(|z|^2I_d-z\otimes z).
\end{equation}
For both examples, we take the initial condition as 
\begin{equation*}
    f_i(0,v) = n_i\left(\frac{m_i}{2\pi T_i}\right)\exp{\left(-\frac{m_i|v-u_i|^2}{2T_i}\right)},
\end{equation*}
with $n_1 = n_2 = 1$, $u_1 = \left( \frac{1}{2}, \frac{1}{4}\right)^{T}$, $u_2 = \left(-\frac{1}{4},0 \right)^{T}$, $T_1 = \frac{1}{4}$, $T_2 = \frac{1}{8}$. The mass ratios and $B_{ij}$ are given in each example. Since there is no exact solution in this case, we demonstrate the structure-preserving properties of the particle method by examining conservation, entropy decay, and relaxation to a Maxwellian. Therefore, the time length is set long from $t=0$ to $t=50$. In particular, the last example highlights the effects of enforcing the condition $m_1\epsilon_1 = m_2\epsilon_2$ given in Corollary \ref{corollary: Universal temperature relaxation} to ensure a species independent equilibrium temperature versus the effects when this condition is not enforced.  Since $\epsilon_i = 0.64h_i^{1.98}$ and $h_i=L_i/n$ (and the same $n$ is used for both species), the way this condition is enforced is by requiring 
\begin{equation}\label{eq:computational domain constraint}
    \left(\frac{m_1}{m_2}\right)^{1/1.98}L_1 = L_2.
\end{equation}
We choose $L_1$ and $L_2$ satisfying the constraint above and so that the support of the distribution is contained in the computational domain.  Because of this, in both of the Coulomb collision examples, the computational domain is centered around the initial velocities of each species, as opposed to being centered at the origin in order to use a smaller computational domain.


\titleformat{\subsection}
{\normalfont\large\bfseries}{Example~\thesubsection}{1em}{}

\subsection{BKW Example 1}\label{Example:BKW Implicit mid}
In this example, the masses of each species are chosen as $m_1 = 2$ and $m_2 = 1$ and $B_{11} = \frac{1}{8}$, $B_{12} = B_{21} = \frac{1}{16}$, and $B_{22} = \frac{1}{32}$.
The computational domain is $[-3,3]^2$ for species 1 and $[-4,4]^2$ for species 2 (which means $\epsilon_i$ is different for different species).  We obtain the particle solution using different numbers of particles $N = n^2$ with $n = 40,45,50,55,60$. The relative $L^1$, $L^2$, and $L^{\infty}$ errors at the final time $t=5$ are plotted in Figure \ref{fig:Example 1 ROC }, which shows that the particle method developed in this paper is approximately 2nd order accurate in space (w.r.t. $h_i$, the initial mesh size). Figure \ref{fig:Example 1 total energy and entropy} shows the time evolution of the total energy and total entropy for $n^2 = 50^2$ particles with respect to different time steps $\Delta t$.

Here we advance the particle method using two time integrators: forward Euler and implicit midpoint as discussed in Subsection \ref{subsec:time}. For the implicit point method, we use the fixed point iteration at each inner time step with a tolerance of $10^{-8}$ for convergence. We also tracked the time step needed (they may not be optimal but often the case a larger time step would result in the convergence criterion not satisfied). From the numerical results, we can conclude the following: 1) For the typical particle numbers we tested, the error from particle approximation still dominates so it makes little difference of using either first or second order time integrator in terms of accuracy. 2) The implicit midpoint can preserve the energy up to a small error that is dominated by the choice of tolerance in the fixed point iteration, while the forward Euler can preserve the energy up to $O(\Delta t)$. However, the implicit midpoint method often requires smaller time step in order to guarantee the convergence which makes it more expensive than the forward Euler method. More sophisticated iteration schemes may help on convergence and we leave it for future studies. 

Based on the above observations, we choose to use the forward Euler method for the rest of numerical examples.

\newcommand{\ExampleOneRateofConvergenceSpeciesOne}{\includegraphics[width=.49\textwidth]{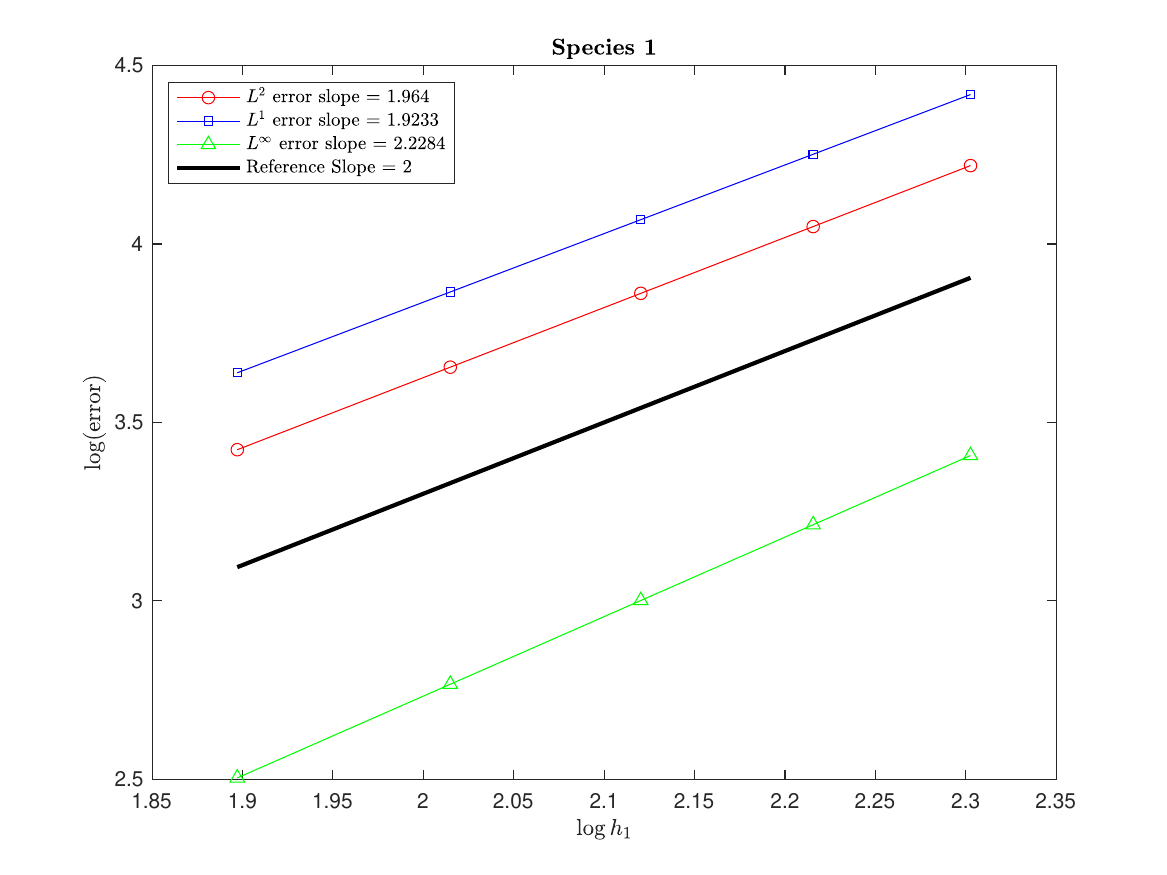}}

\newcommand{\ExampleOneRateofConvergenceSpeciesTwo}{\includegraphics[width=.49\textwidth]{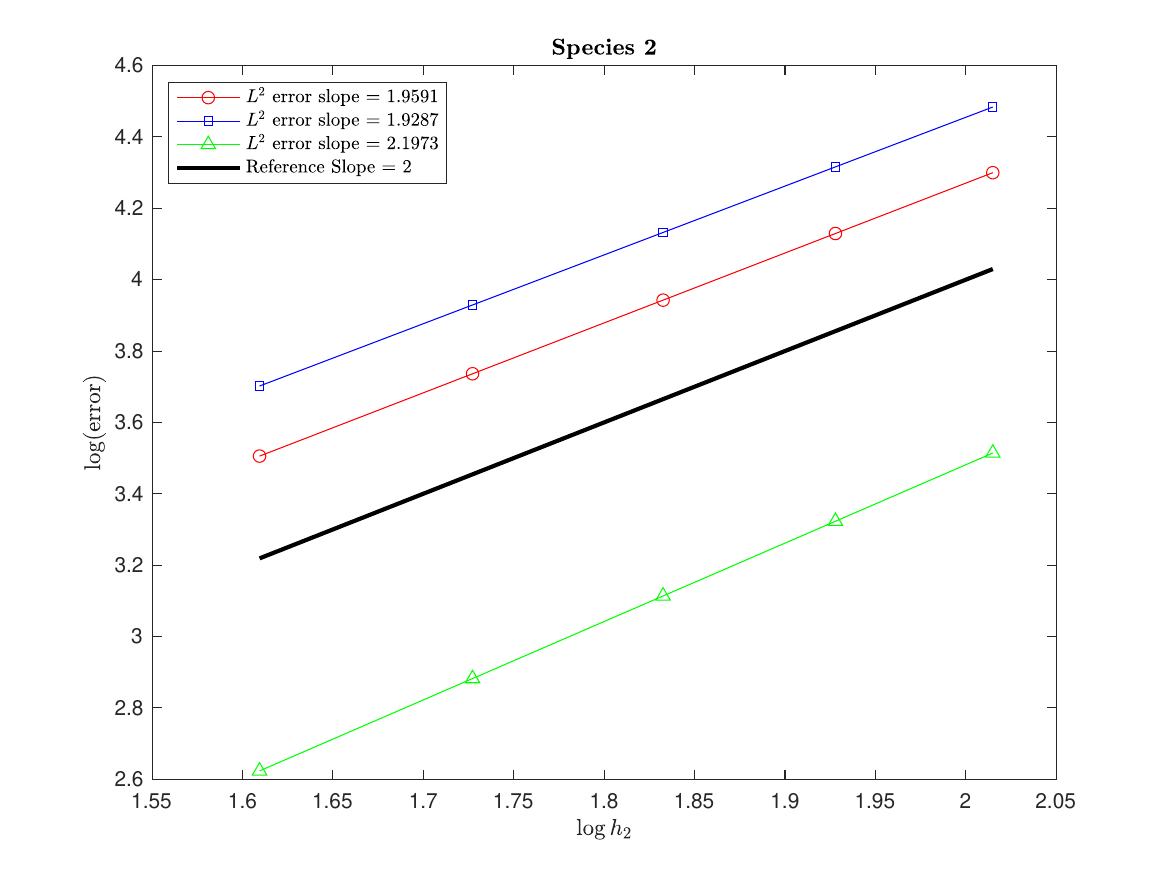}}

\newcommand{\ExampleOneBRateofConvergenceSpeciesOne}{\includegraphics[width=.49\textwidth]{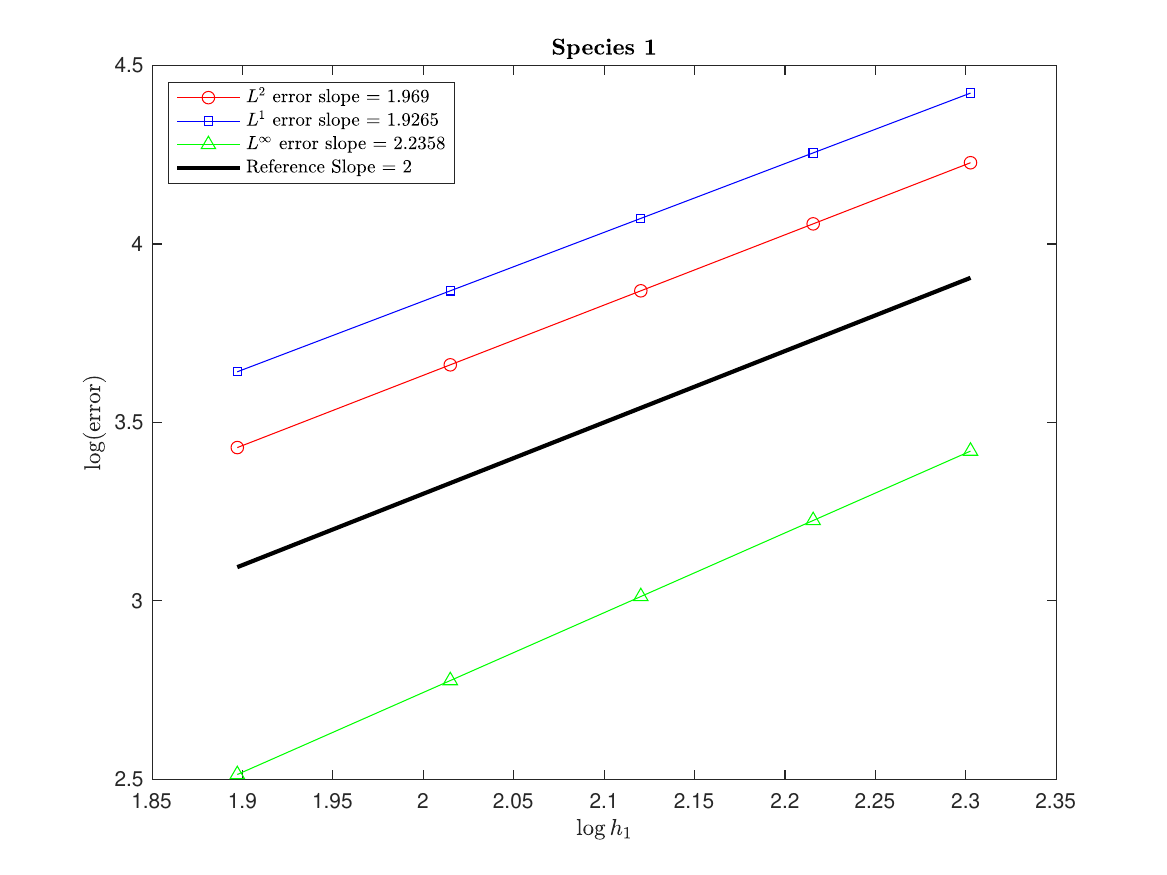}}

\newcommand{\ExampleOneBRateofConvergenceSpeciesTwo}{\includegraphics[width=.49\textwidth]{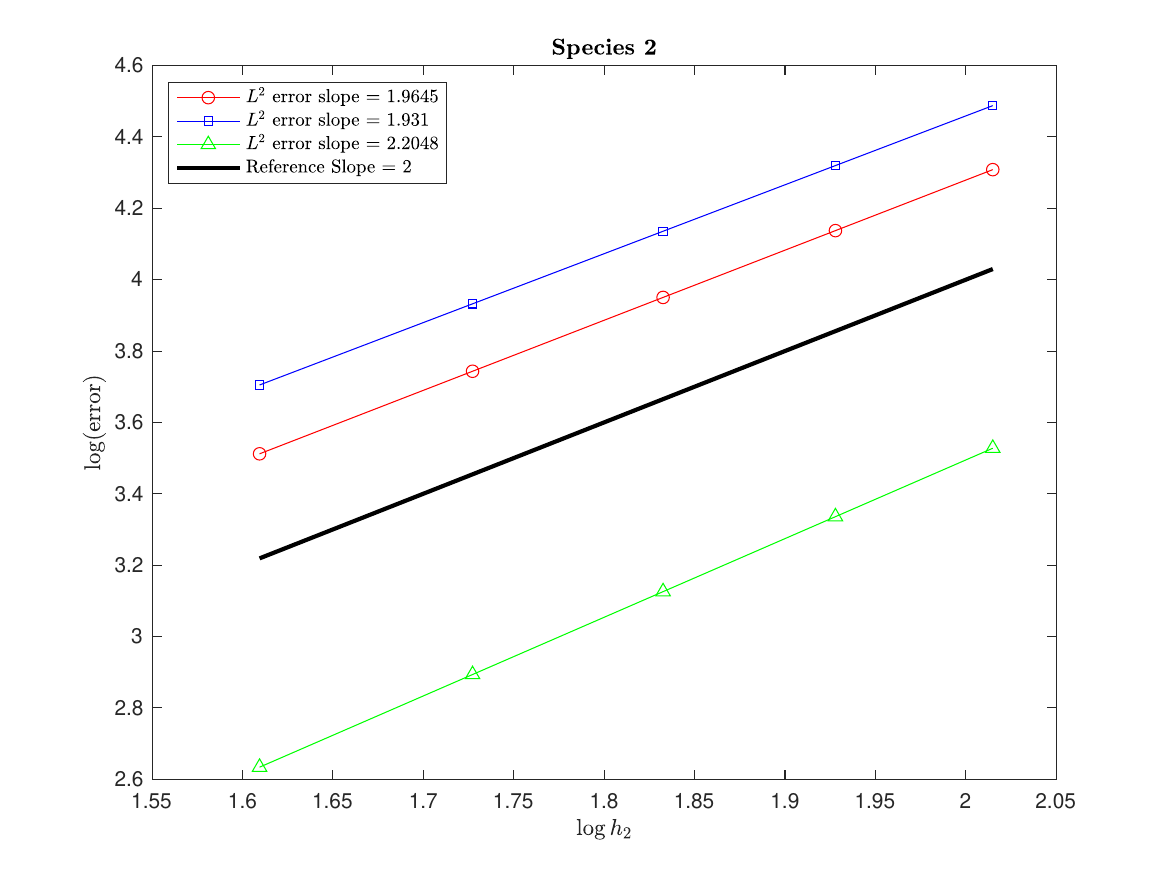}}


\newcommand{\ExampleOneMomentumVone}{\includegraphics[width=.49\textwidth]{Example_1_Total_Momentum_v1 direction_dt_test.eps}}

\newcommand{\ExampleOneMomentumVtwo}{\includegraphics[width=.49\textwidth]{Example_1_Total_Momentum_v2_direction_dt_test.eps}}

\newcommand{\ExampleOneBMomentumVone}{\includegraphics[width=.49\textwidth]{Example_1b_Total_Momentum_v1_direction_dt_test.eps}}

\newcommand{\ExampleOneBMomentumVtwo}{\includegraphics[width=.49\textwidth]{Example_1b_Total_Momentum_v2_direction_dt_test.eps}}


\newcommand{\ExampleOneEnergy}{\includegraphics[width=.49\textwidth]{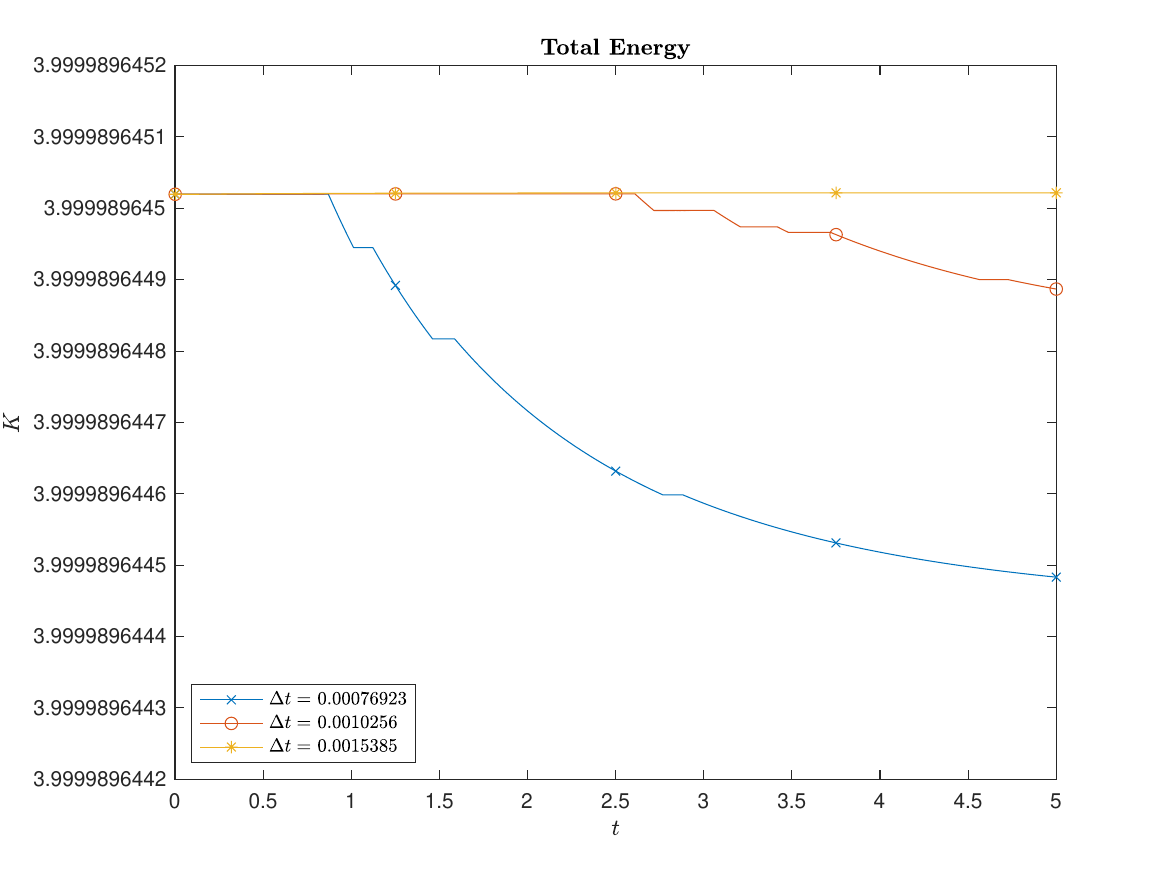}}

\newcommand{\ExampleOneEntropy}{\includegraphics[width=.49\textwidth]{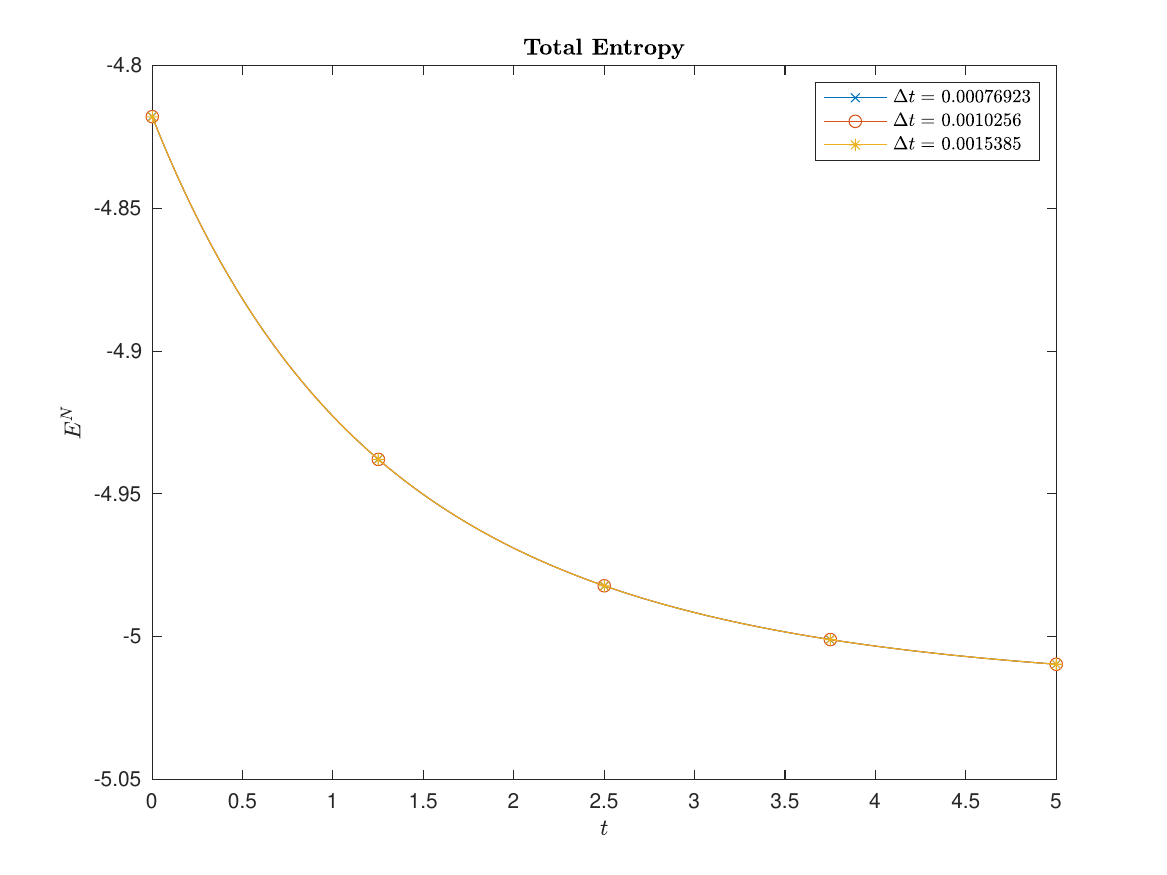}}

\newcommand{\ExampleOneBEnergy}{\includegraphics[width=.49\textwidth]{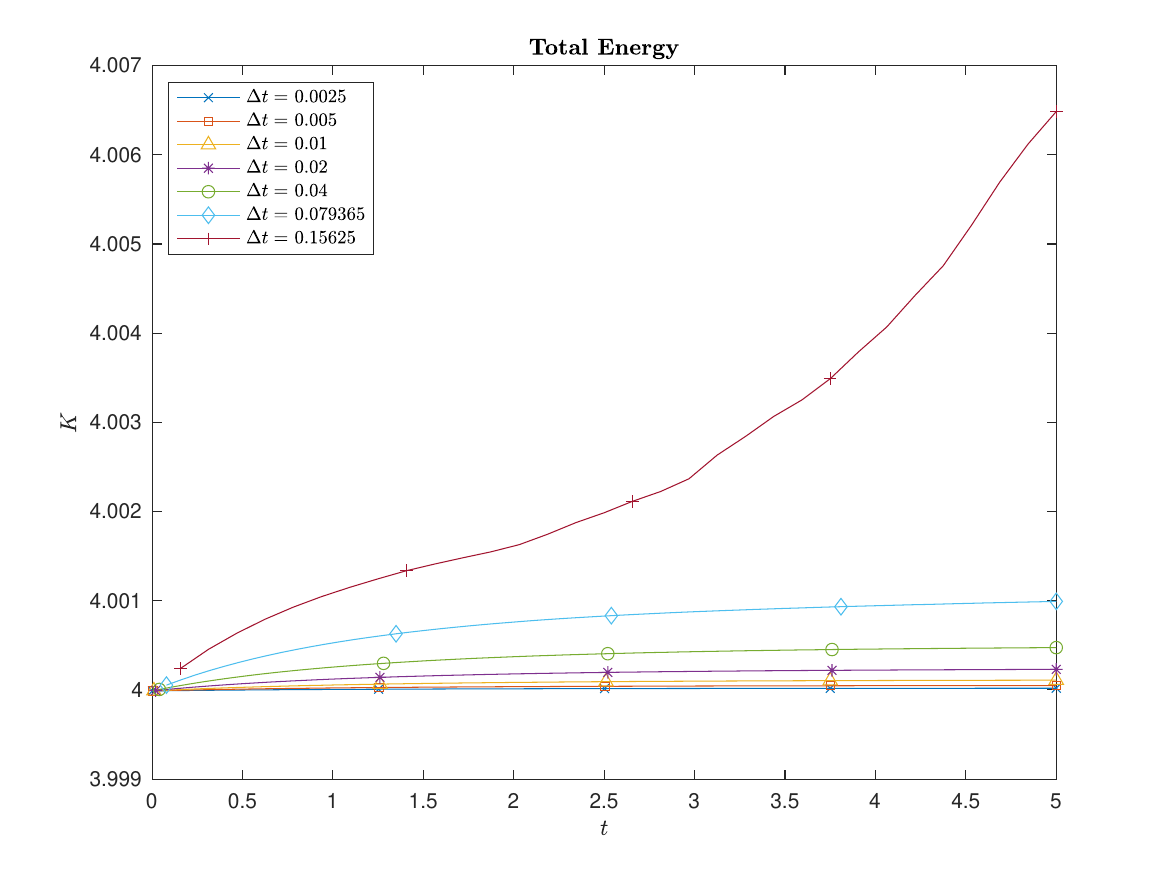}}

\newcommand{\ExampleOneBEntropy}{\includegraphics[width=.49\textwidth]{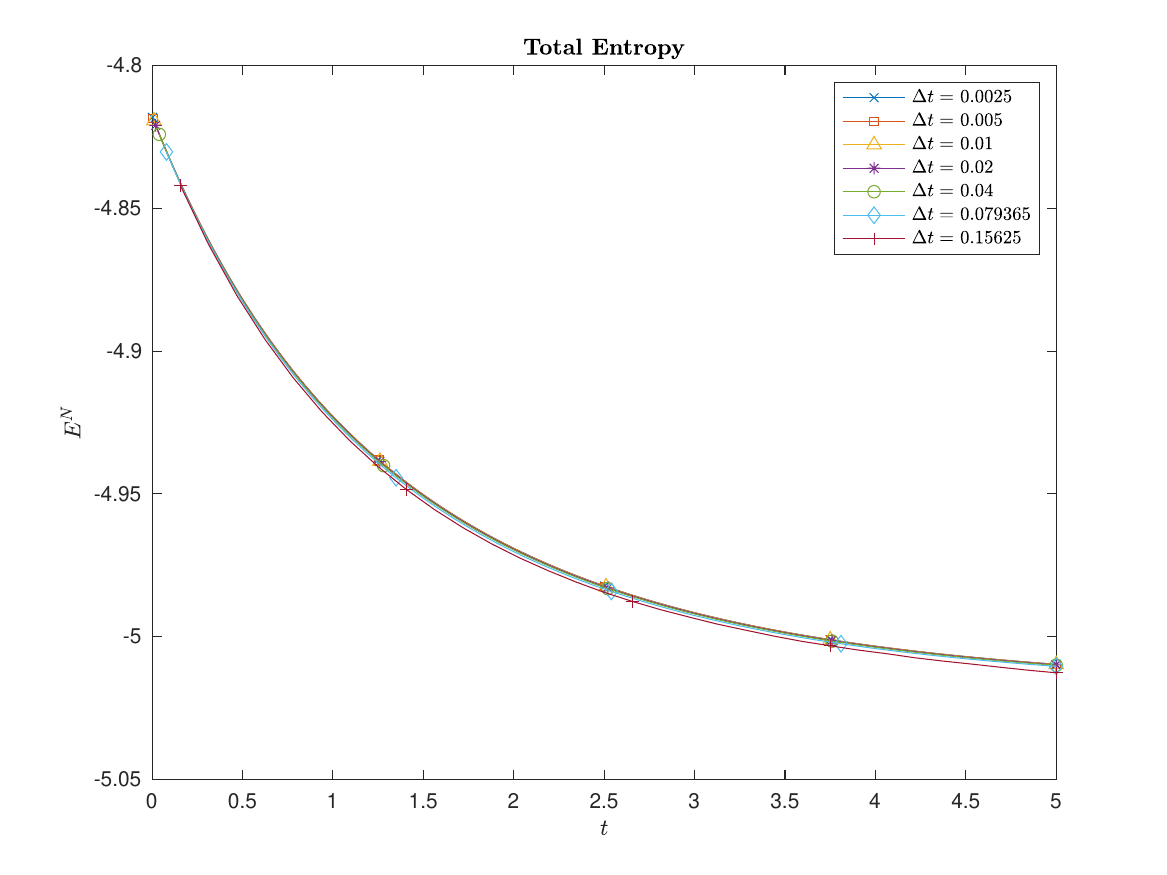}}


\begin{figure}[htp]
    \centering
        \ExampleOneRateofConvergenceSpeciesOne
        \ExampleOneBRateofConvergenceSpeciesOne \\
        \ExampleOneRateofConvergenceSpeciesTwo
        \ExampleOneBRateofConvergenceSpeciesTwo\\
        \begin{tabular}{|c|c|c|c|c|c|}
        \hline
        $n$ & 40 & 45 & 50 & 55 & 60\\
        \hline
        $\Delta t$ & 0.0025 & 0.002 & 0.00153 & 0.00125 & 0.001 \\ 
        \hline
        \end{tabular}
        \begin{tabular}{|c|c|c|c|c|c|}
        \hline
        $n$ & 40 & 45 & 50 & 55 & 60\\
        \hline
        $\Delta t$ & 0.01 & 0.01 & 0.01 & 0.01 & 0.005 \\ 
        \hline
        \end{tabular}
        \caption{Example \ref{Example:BKW Implicit mid}:  Relative $L^{\infty}$, $L^1$, and $L^2$ norms of the error at time $t = 5$ with respect to different $h$ for species 1 and species 2.  The plots on the left were produced using the implicit midpoint method \eqref{eq:implicit midpoint} to approximate $\eqref{eq:discrete-in-velocity particle method}$ and the plots on the right were produced using the forward Euler method \eqref{eq:Forward Euler} to approximate \eqref{eq:discrete-in-velocity particle method}.  The tables below the plots show the time step $\Delta t$ used for each value of $n$ for the implicit midpoint method (left) and the forward Euler method (right).
        }
        \label{fig:Example 1 ROC }
\end{figure}

\begin{figure}
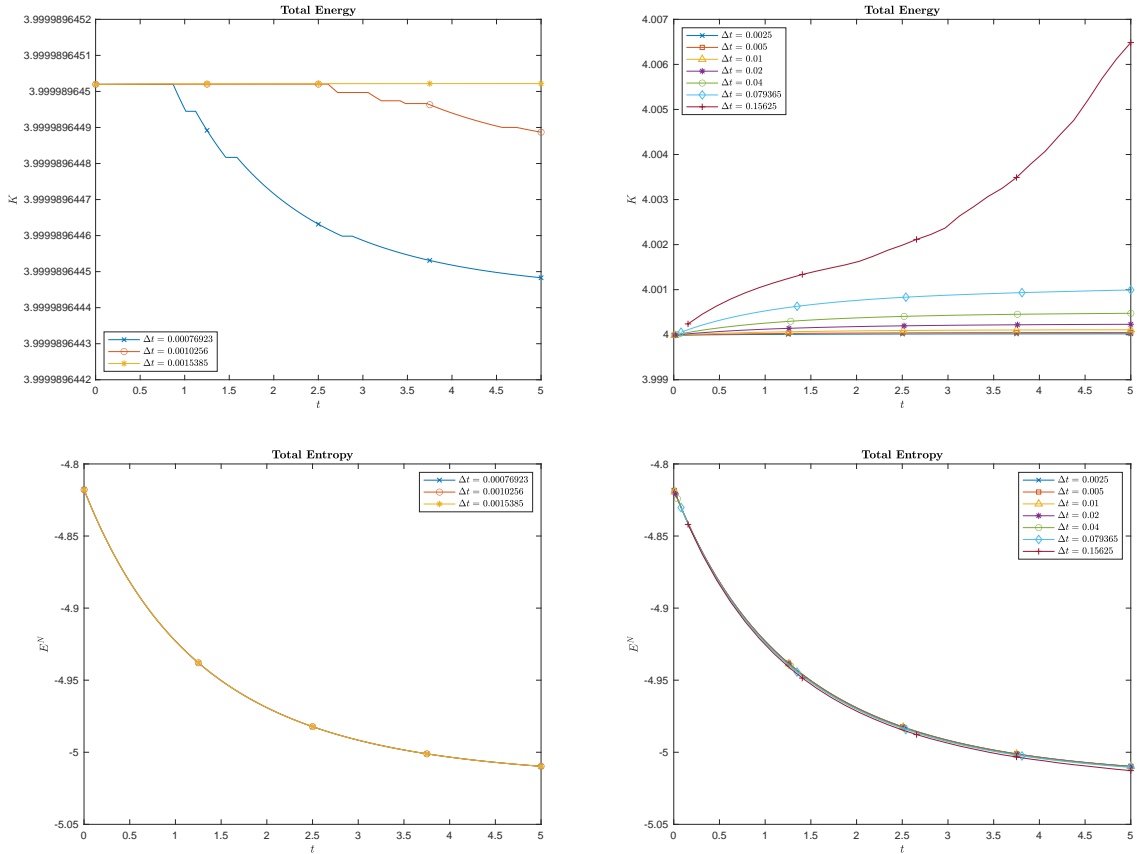

    \centering
    \ExampleOneEnergy
    \ExampleOneBEnergy \\
    \ExampleOneEntropy
    \ExampleOneBEntropy
    \caption{Example \ref{Example:BKW Implicit mid}:  Time evolution of the total energy (top row) and total entropy (bottom row). 
 The plots on the left were produced using the implicit midpoint method \eqref{eq:implicit midpoint} to approximate \eqref{eq:discrete-in-velocity particle method} and the plots on the right were produced using the forward Euler method \eqref{eq:Forward Euler} to approximate \eqref{eq:discrete-in-velocity particle method}. }
    \label{fig:Example 1 total energy and entropy}
\end{figure}

\subsection{BKW Example 2}\label{Example:BKW mass rato 20}
In this example, the masses of each species are chosen as $m_1 = 20$ and $m_2 = 1$ and $B_{11} = \frac{1}{2}$, $B_{12} = B_{21} = \frac{49}{40}$, and $B_{22} = \frac{1}{800}$.  With a mass ratio of $20$, it is important to use different domain size (hence different regularization parameter $\epsilon_i$) for each species. To illustrate this, we choose the computational domain $[-0.9,0.9]^2$ for species 1 and $[-4,4]^2$ for species 2, and compare the results with those using the same computational domain $[-4,4]^2$ for both species.  

Using $n^2 = 60^2, 80^2, 100^2, 120^2$ particles and a time step $\Delta t = 0.001$, Figure \ref{fig:Example 2 ROC} shows that when a smaller computational domain is used for the heavier species, the particle method is approximately 2nd order accurate while using the same computational domain for each species, the order of accuracy degrades.  In Figure \ref{fig:Example 2 L2 error} time evolution of the relative $L^2$ error also confirms better accuracy when using a smaller computational domain for the heavier species.  

\newcommand{\ExampleTwoRateofConvergenceSpeciesOne}{\includegraphics[width=.49\textwidth]{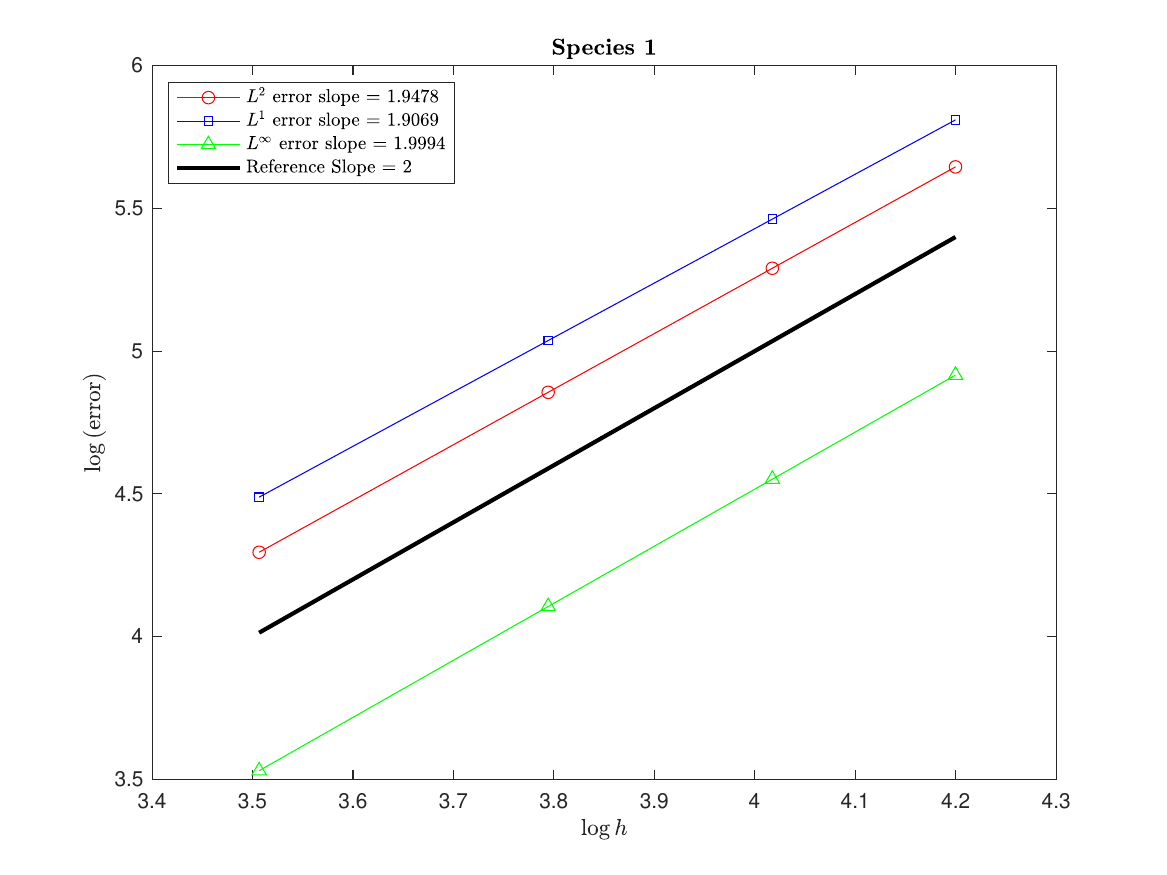}}

\newcommand{\ExampleTwoRateofConvergenceSpeciesTwo}{\includegraphics[width=.49\textwidth]{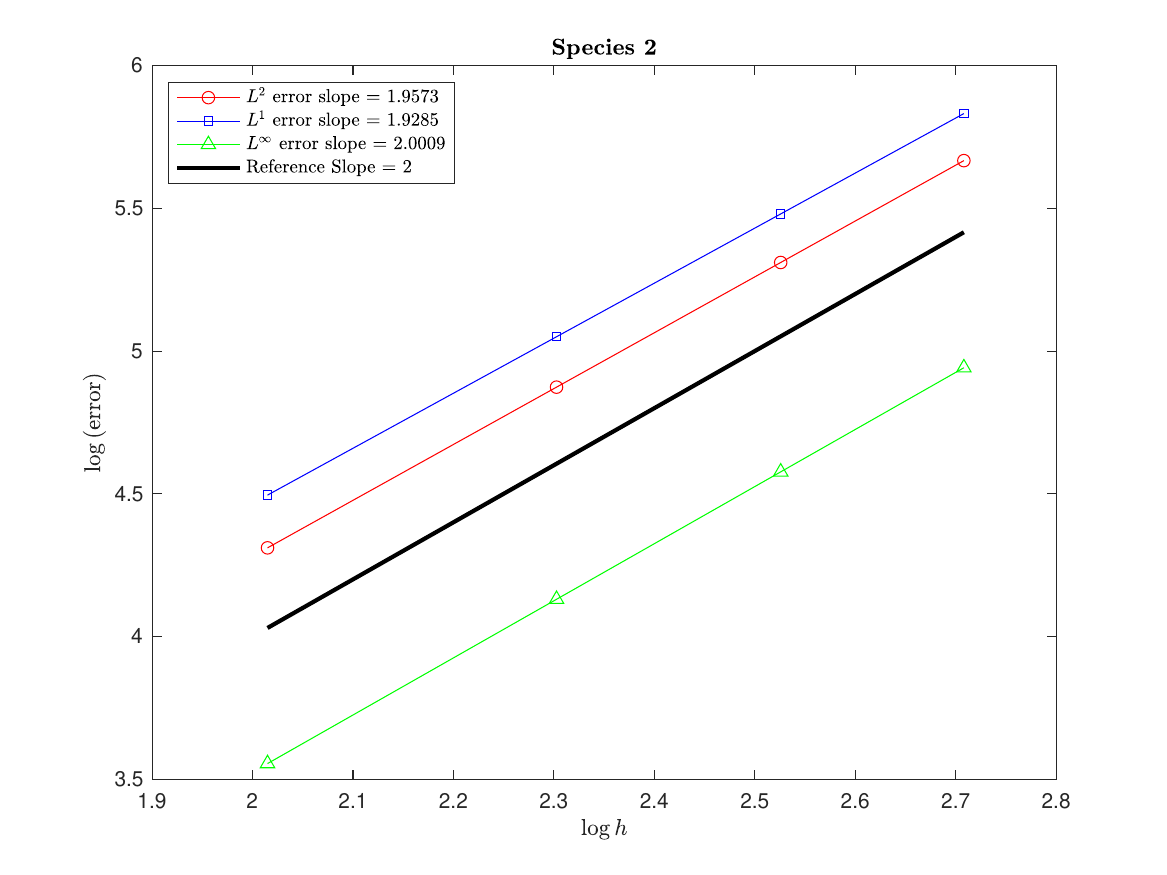}}

\newcommand{\ExampleTwoRateofConvergenceSpeciesOneSameComputationalDomain}{\includegraphics[width=.49\textwidth]{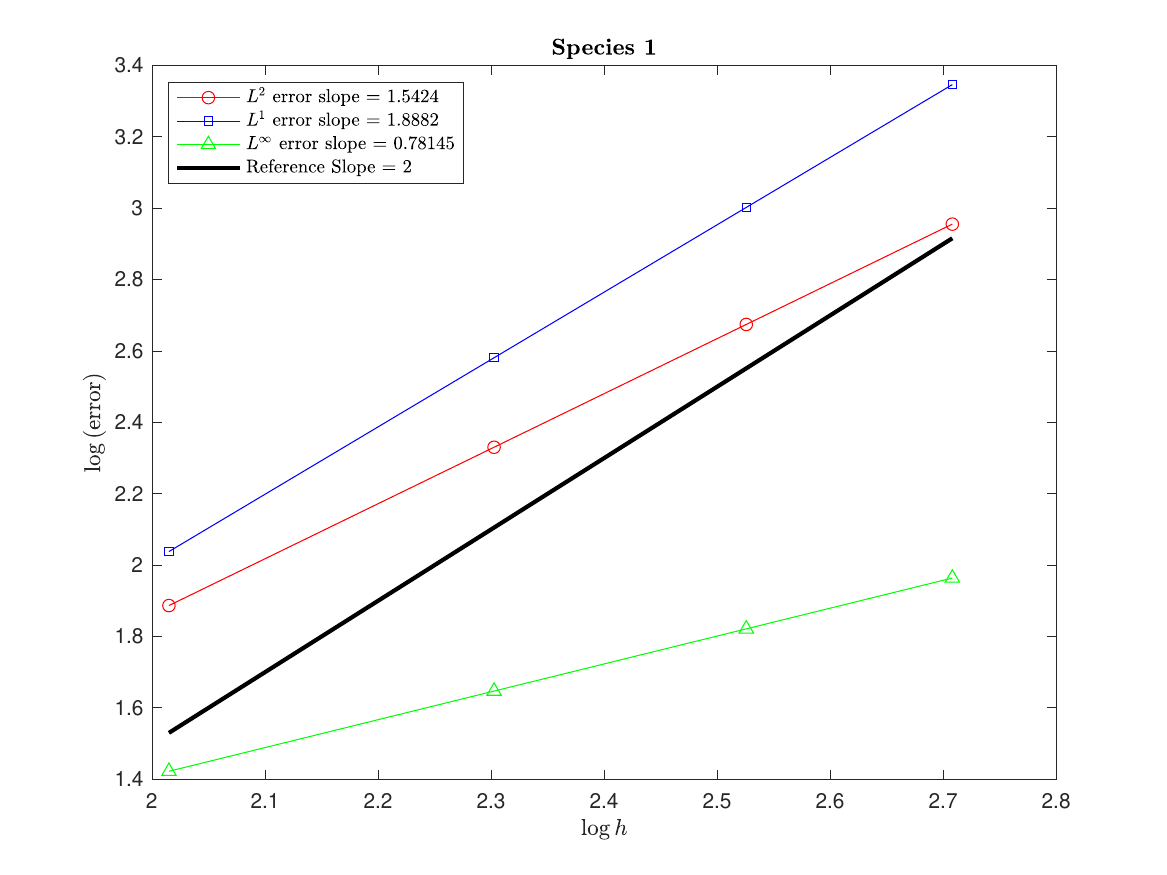}}

\newcommand{\ExampleTwoRateofConvergenceSpeciesTwoSameComputationalDomain}{\includegraphics[width=.49\textwidth]{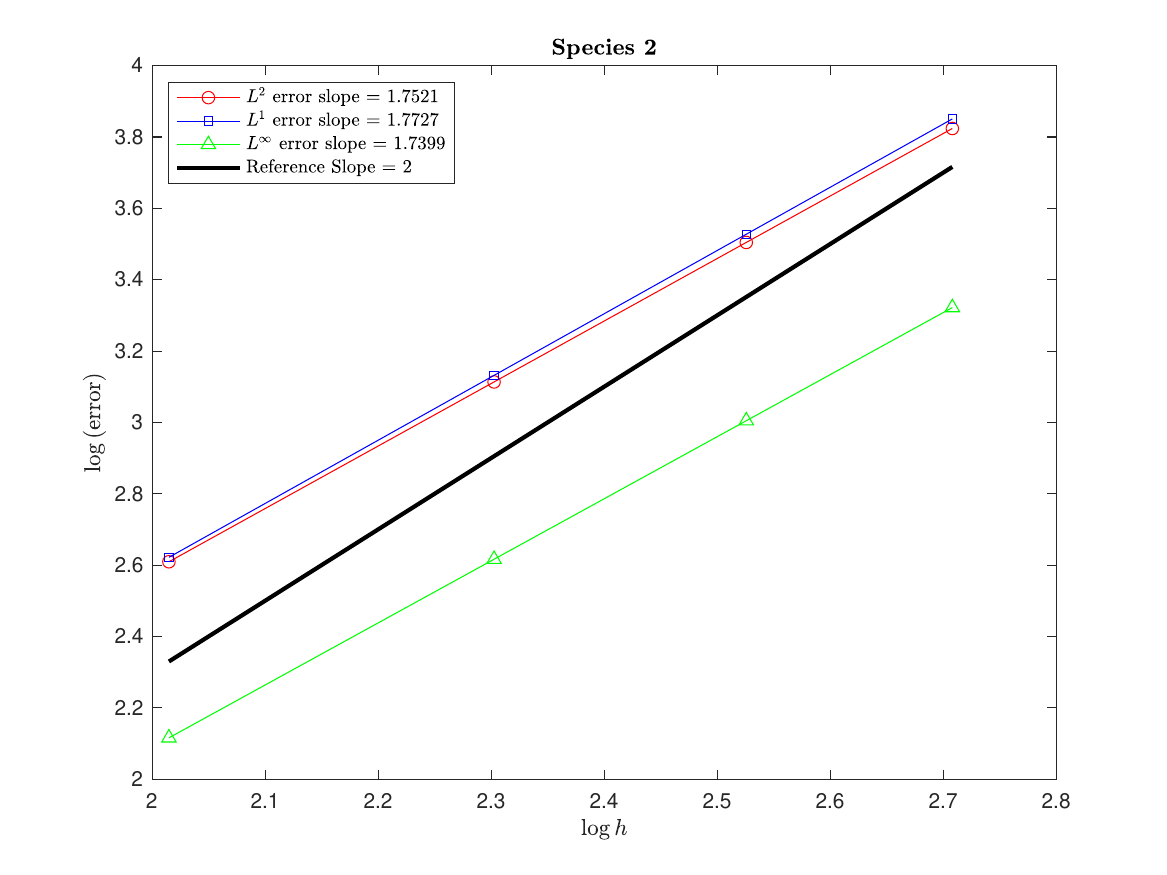}}

\begin{figure}[htp]
    \begin{center}
        \ExampleTwoRateofConvergenceSpeciesOne
        \ExampleTwoRateofConvergenceSpeciesTwo \\
        \ExampleTwoRateofConvergenceSpeciesOneSameComputationalDomain
        \ExampleTwoRateofConvergenceSpeciesTwoSameComputationalDomain
        \caption{Example \ref{Example:BKW mass rato 20}:  Relative $L^{\infty}$, $L^1$, and $L^2$ norms of the error at time $t = 5$ with respect to different $h$.  The upper left and right are plots when the computational domain is $[-0.9,0.9]^2$ for species 1 and $[-4,4]^2$ for species 2.  The lower left and right are plots when the computational domain is $[-4,4]^2$ for both species.}
        \label{fig:Example 2 ROC}
    \end{center}
\end{figure}



\newcommand{\ExampleTwoLTwoErrorSpeciesOneDifferentCompDomain}{\includegraphics[width=.49\textwidth]{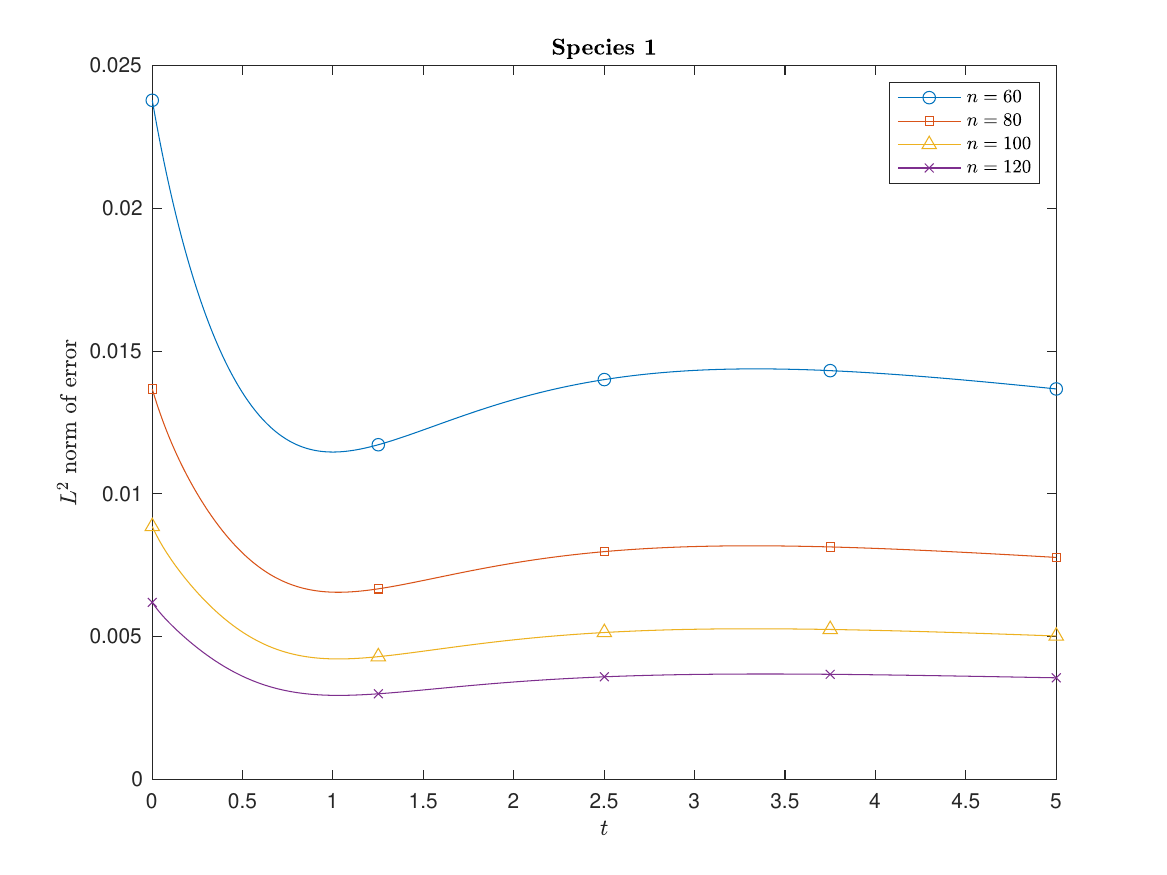}}

\newcommand{\ExampleTwoLTwoErrorSpeciesTwoDifferentCompDomain}{\includegraphics[width=.49\textwidth]{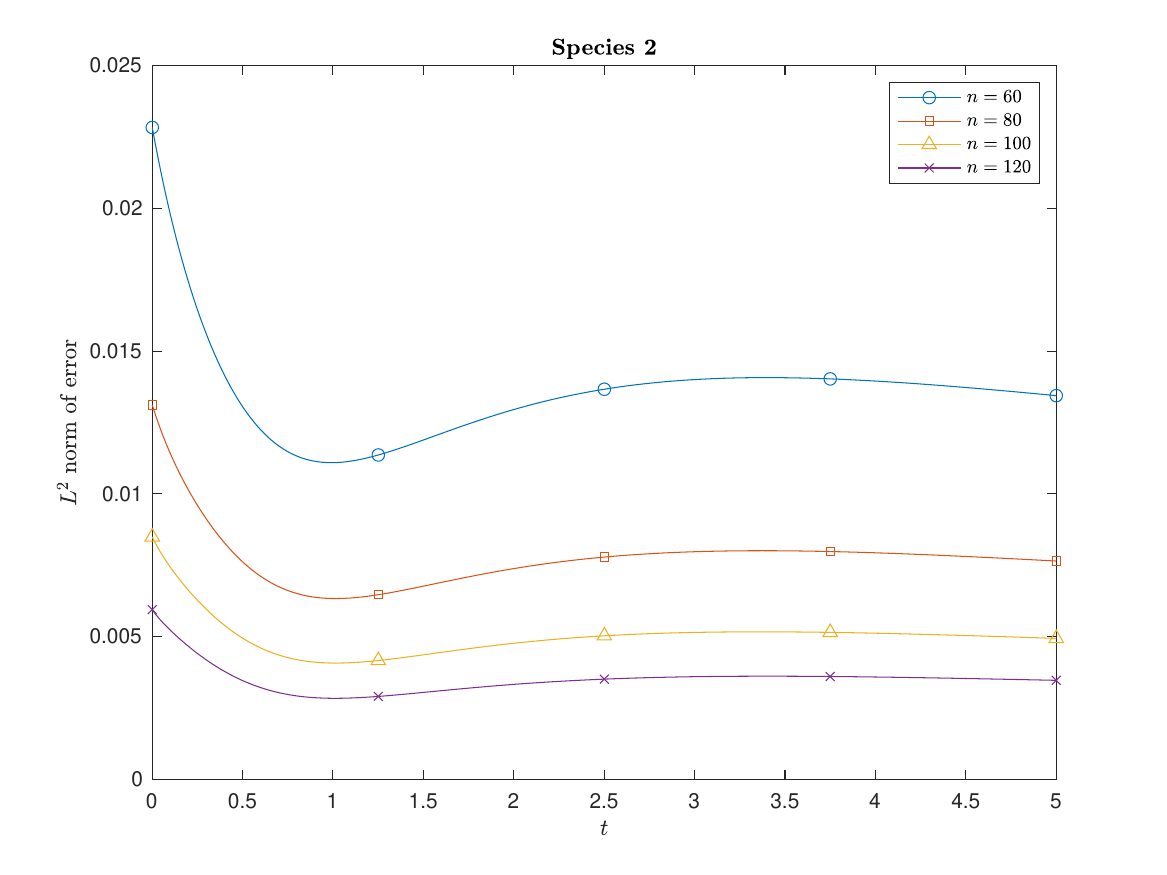}}

\newcommand{\ExampleTwoLTwoErrorSpeciesOneSameCompDomain}{\includegraphics[width=.49\textwidth]{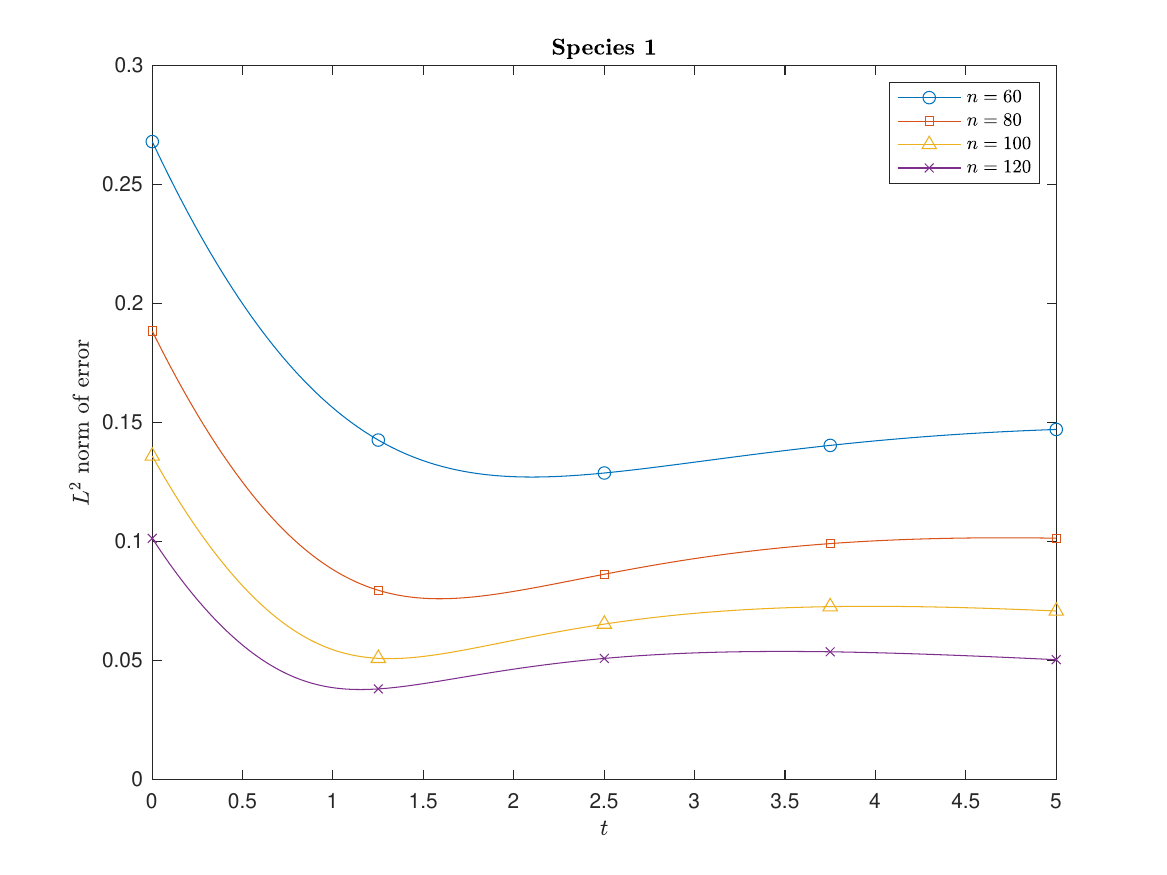}}

\newcommand{\ExampleTwoLTwoErrorSpeciesTwoSameCompDomain}{\includegraphics[width=.49\textwidth]{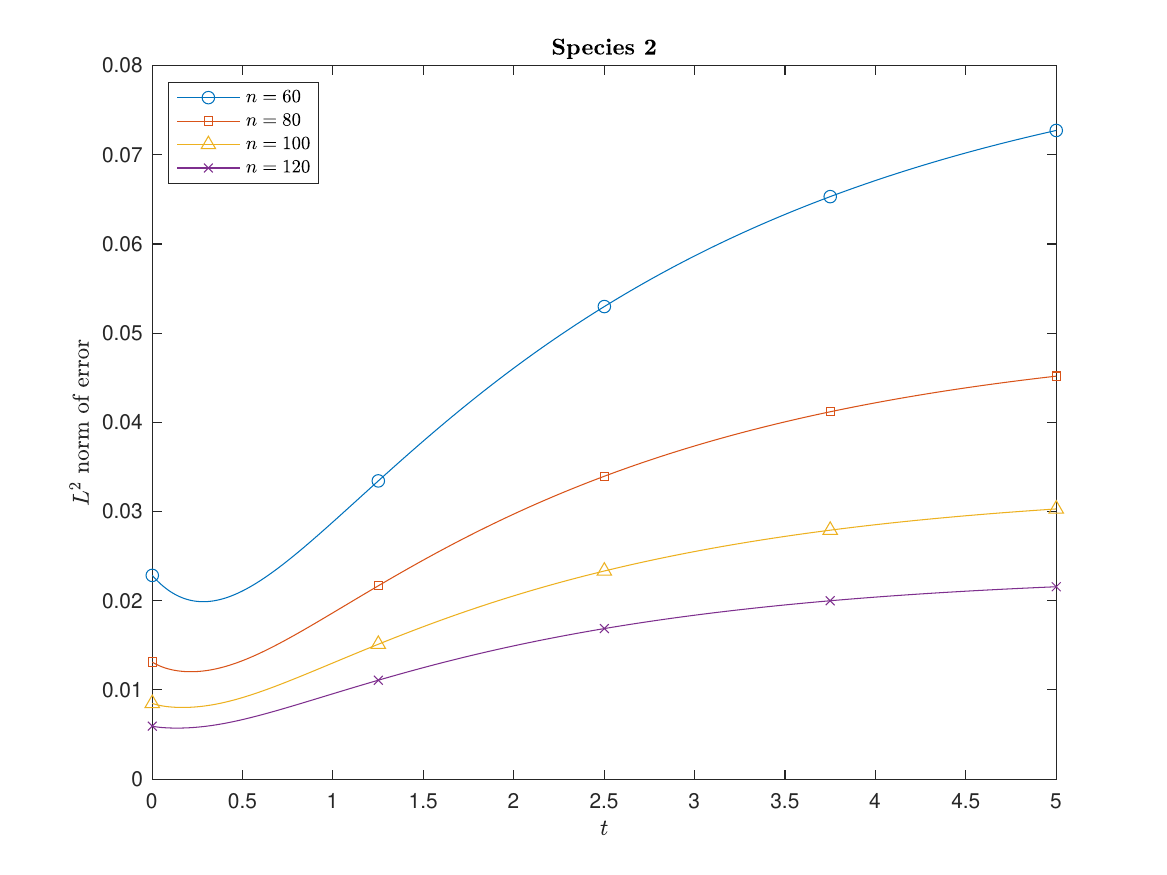}}

\begin{figure}[htp]
    \begin{center}
        \ExampleTwoLTwoErrorSpeciesOneDifferentCompDomain
        \ExampleTwoLTwoErrorSpeciesTwoDifferentCompDomain \\
        \ExampleTwoLTwoErrorSpeciesOneSameCompDomain
        \ExampleTwoLTwoErrorSpeciesTwoSameCompDomain
        \caption{Example \ref{Example:BKW mass rato 20}:  Time evolution of the relative $L^2$ error with respect to different number of particles.  The upper left and right plots show the relative error evolution when the computational domain is $[-0.9,0.9]^2$ for species 1 and $[-4,4]^2$ for species 2.  The lower left and right plots show the relative error evolution when the computational domain is $[-4,4]^2$ for both species.}
        \label{fig:Example 2 L2 error}
    \end{center}
\end{figure}

\subsection{BKW Example 3}\label{Example:BKW mass ratio 100}
In Examples \ref{Example:BKW Implicit mid} and \ref{Example:BKW mass rato 20}, the BKW solutions have mass ratios $m_1$/$m_2 = 2$ and $m_1/m_2 = 20$, respectively.  In reality, the mass ratio of two different species in a plasma can be much larger, and therefore in this example we consider $m_1 = 100$ and $m_2 = 1$ and $B_{11} = \frac{1}{2}$, $B_{12} = B_{21} = \frac{1249}{200}$, $B_{22} = \frac{1}{20000}$.  The computational domain is $[-0.4,0.4]^2$ for species 1 and $[-4,4]^2$ for species 2. We use $n^2 = 100^2$ particles and a time step of $\Delta t = 10^{-4}$. Figure \ref{fig:Example 3 total energy and entropy and PDFs} shows the time evolution of the total energy and total entropy, as well as cross-sections of the numerical solutions at time $t = 5$. The solutions are captured well, along with the expected energy conservation and entropy decay properties.

\newcommand{\ExampleThreeTotalEnergy}{\includegraphics[width=.49\textwidth]{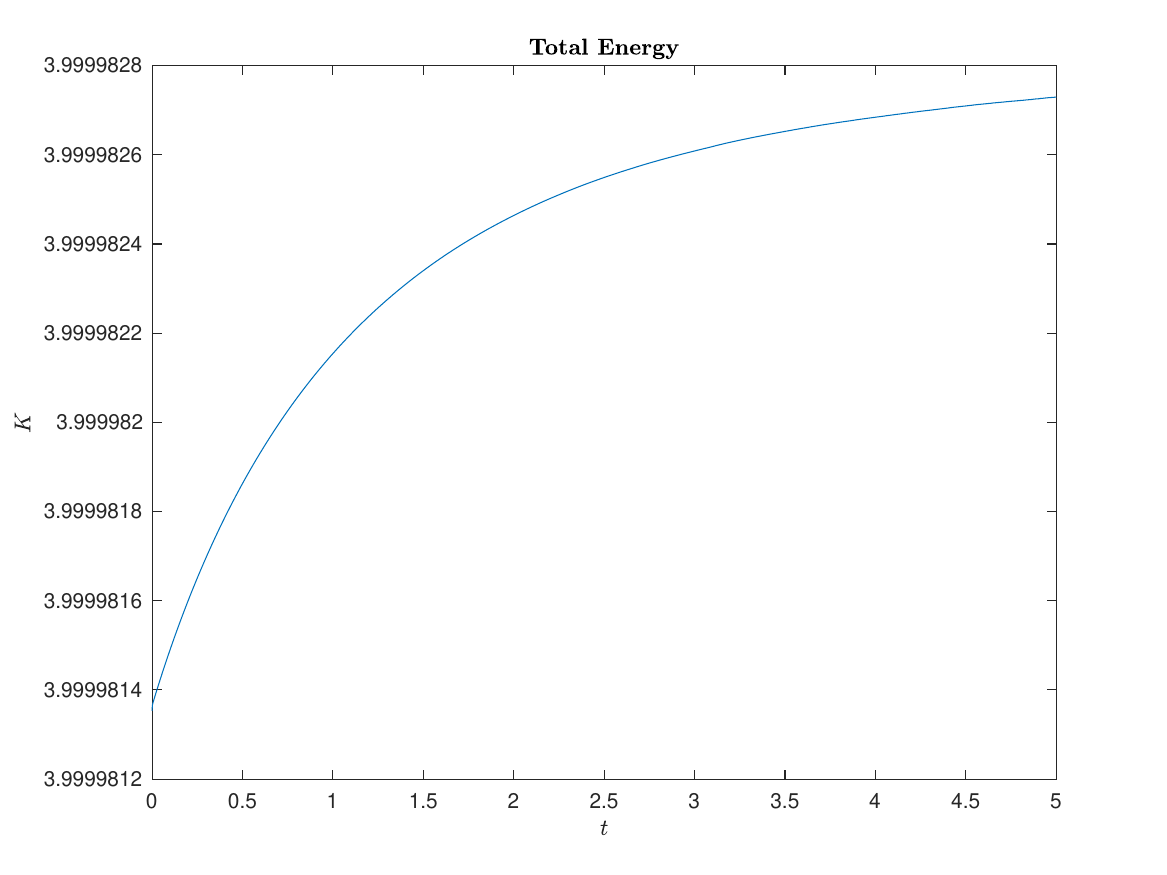}}

\newcommand{\ExampleThreeTotalEntropy}{\includegraphics[width=.49\textwidth]{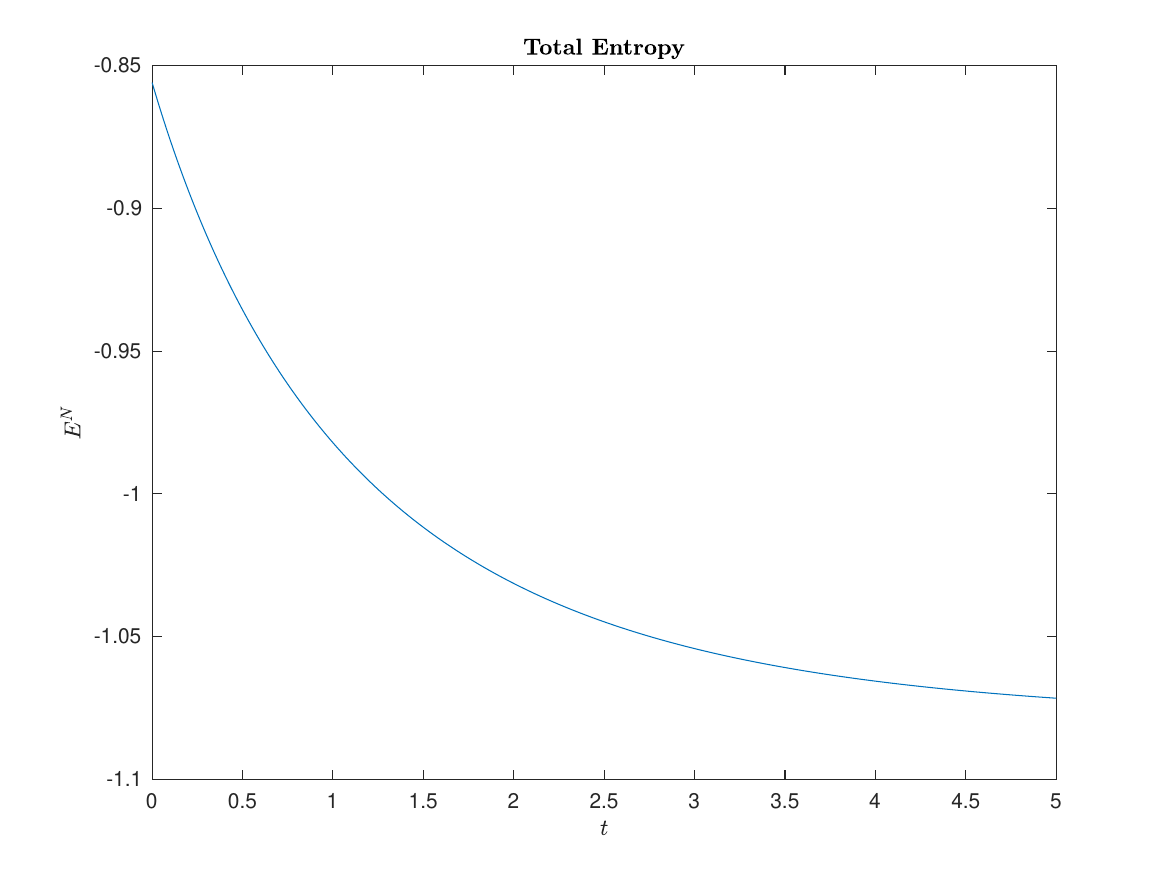}}

\newcommand{\ExampleThreeSpeciesOnePDF}{\includegraphics[width=.49\textwidth]{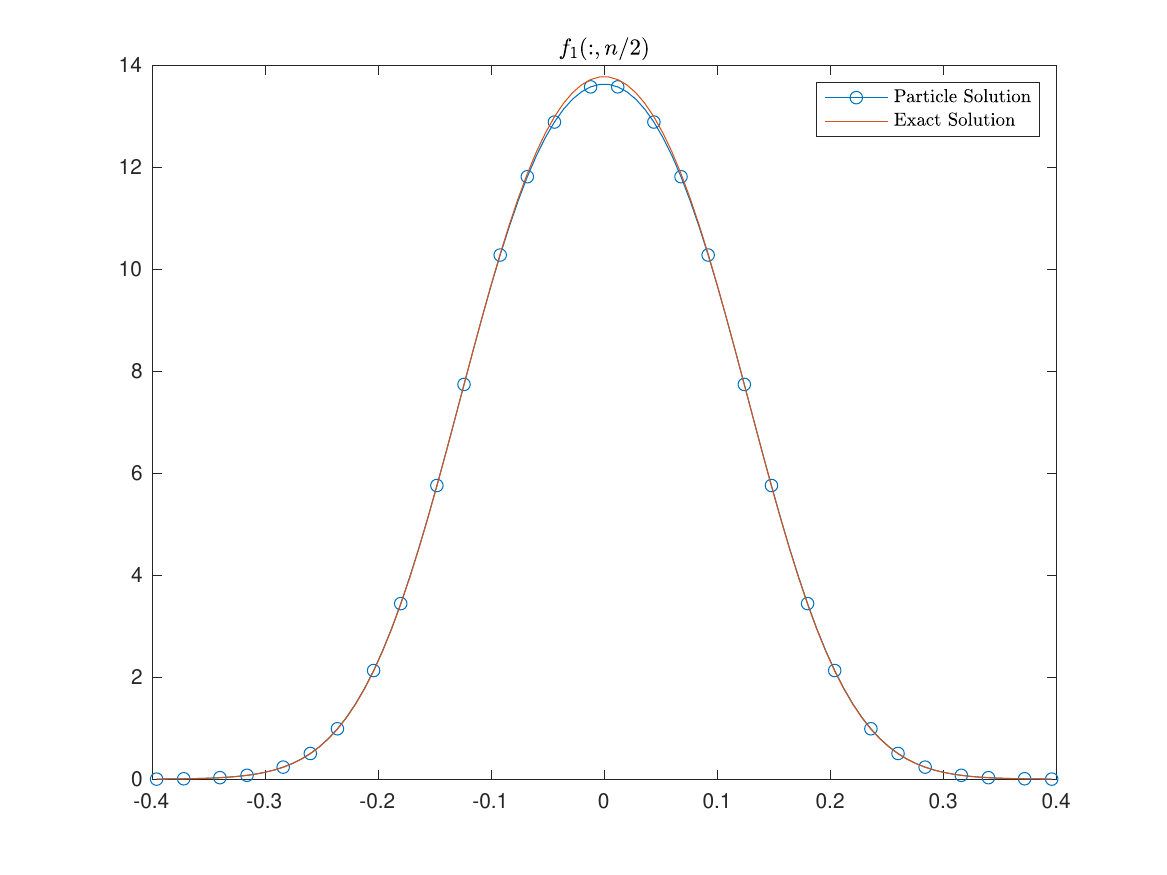}}

\newcommand{\ExampleThreeSpeciesTwoPDF}{\includegraphics[width=.49\textwidth]{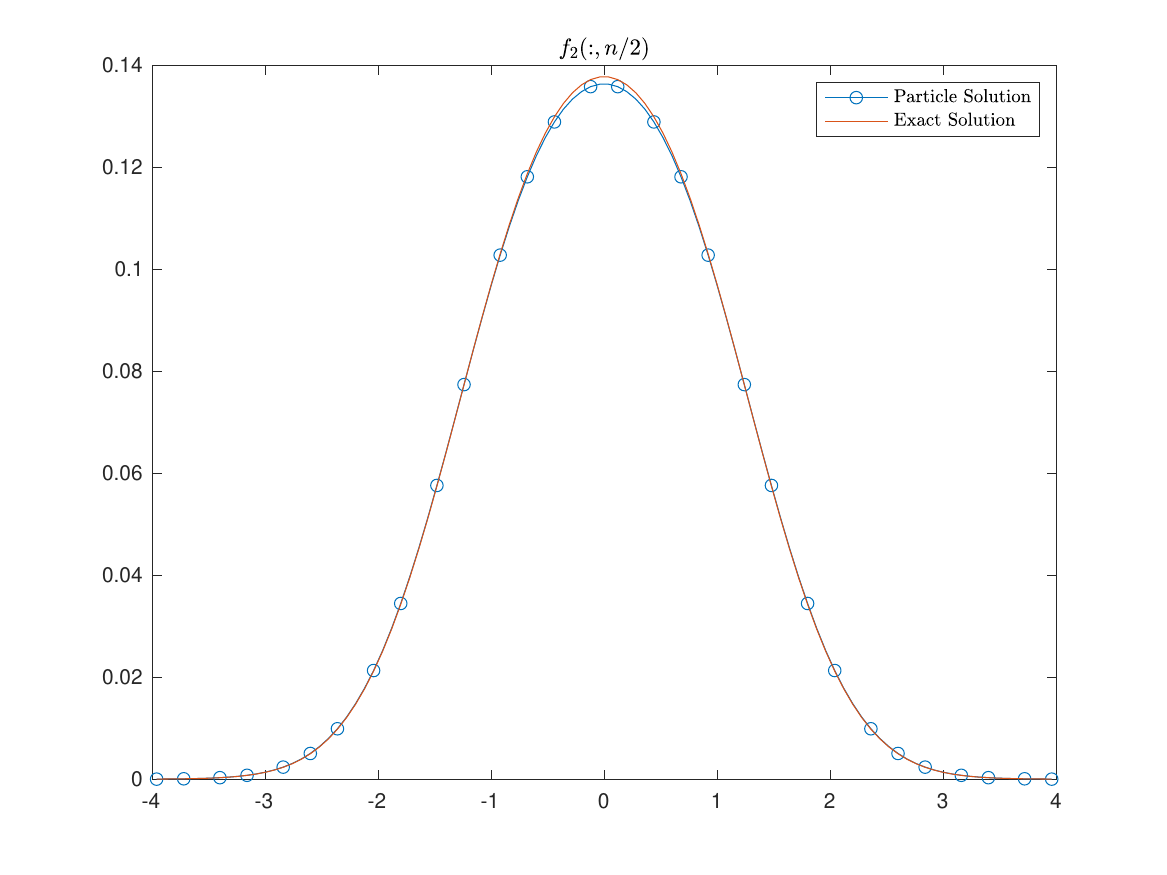}}

\begin{figure}[htp]
    \centering
    \ExampleThreeTotalEnergy
    \ExampleThreeTotalEntropy \\
    \ExampleThreeSpeciesOnePDF
    \ExampleThreeSpeciesTwoPDF
    \caption{Example \ref{Example:BKW mass ratio 100}:  The upper left and right plots are the time evolution of the total energy and the total entropy.  The lower left and right are plots of cross-sections of the particle solution compared to the exact solution, with species 1 on the left and species 2 on the right at time $t = 5$.}
    \label{fig:Example 3 total energy and entropy and PDFs}
\end{figure}
\subsection{Coulomb Example 1}\label{Example:Coulomb case example 2} 
The final two examples are Coulomb collision examples.  In the first case, the masses of each species are the same $m_1 = m_2 = 1$, and $B_{11} = B_{12} = B_{21} = B_{22} = \frac{1}{32}$. The number of particles used is $n^2 = 50^2$.  
Figure \ref{fig:Example 5 Momentum total energy and total entropy} shows the time evolution of total energy and total entropy for different values of $\Delta t$.  The total energy is conserved on the order of $\Delta t$ and that the total entropy is decreasing.  Figure \ref{fig:Example 5 velocity and temperature} shows the time evolution of the species velocities (in each dimension) and species temperatures using $\Delta t = 0.02$.  The velocities and temperatures of each species indeed relax to the expected equilibrium velocities and temperature.  The size of the computational domain is the same for both species, that is $L_1 = L_2 = 4$.  This along with $m_1 = m_2$ ensures that \eqref{eq:computational domain constraint} is satisfied.

\newcommand{\ExampleFiveEnergy}{\includegraphics[width=.49\textwidth]{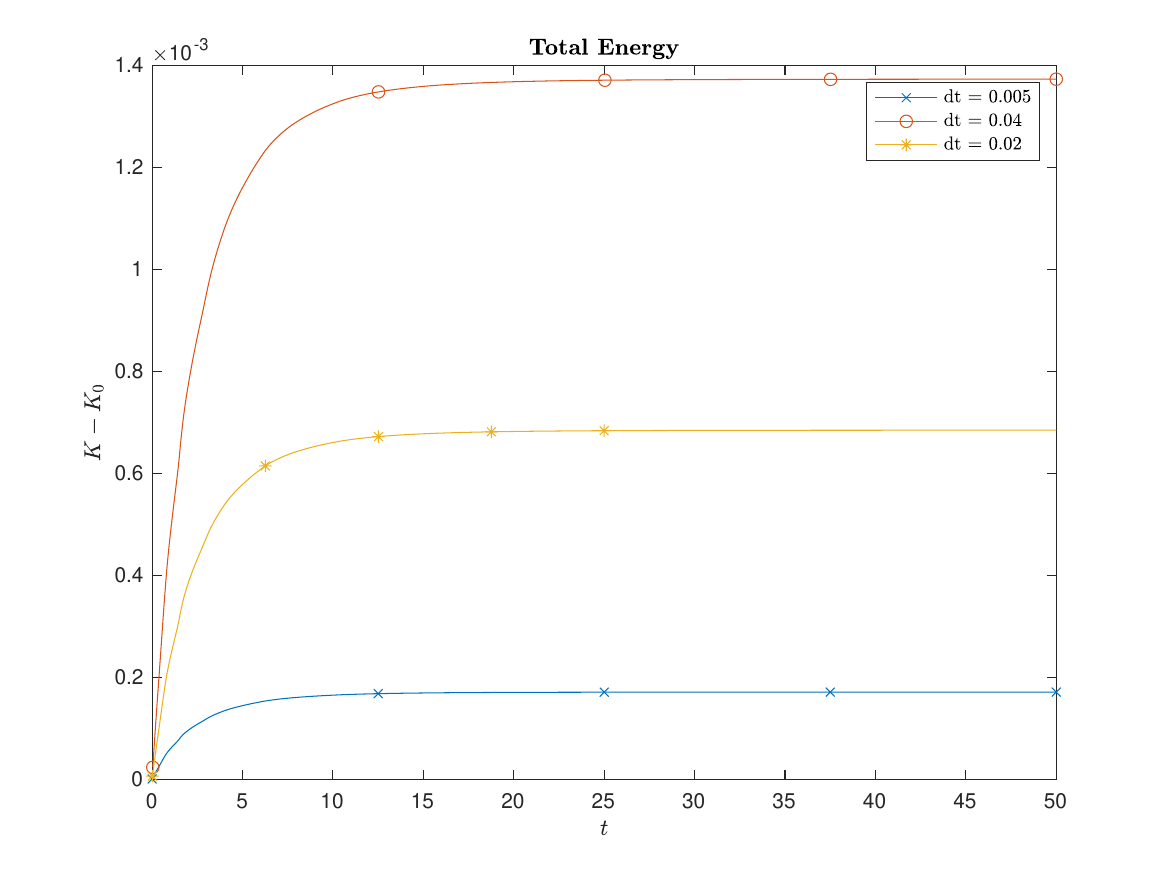}}

\newcommand{\ExampleFiveEntropy}{\includegraphics[width=.49\textwidth]{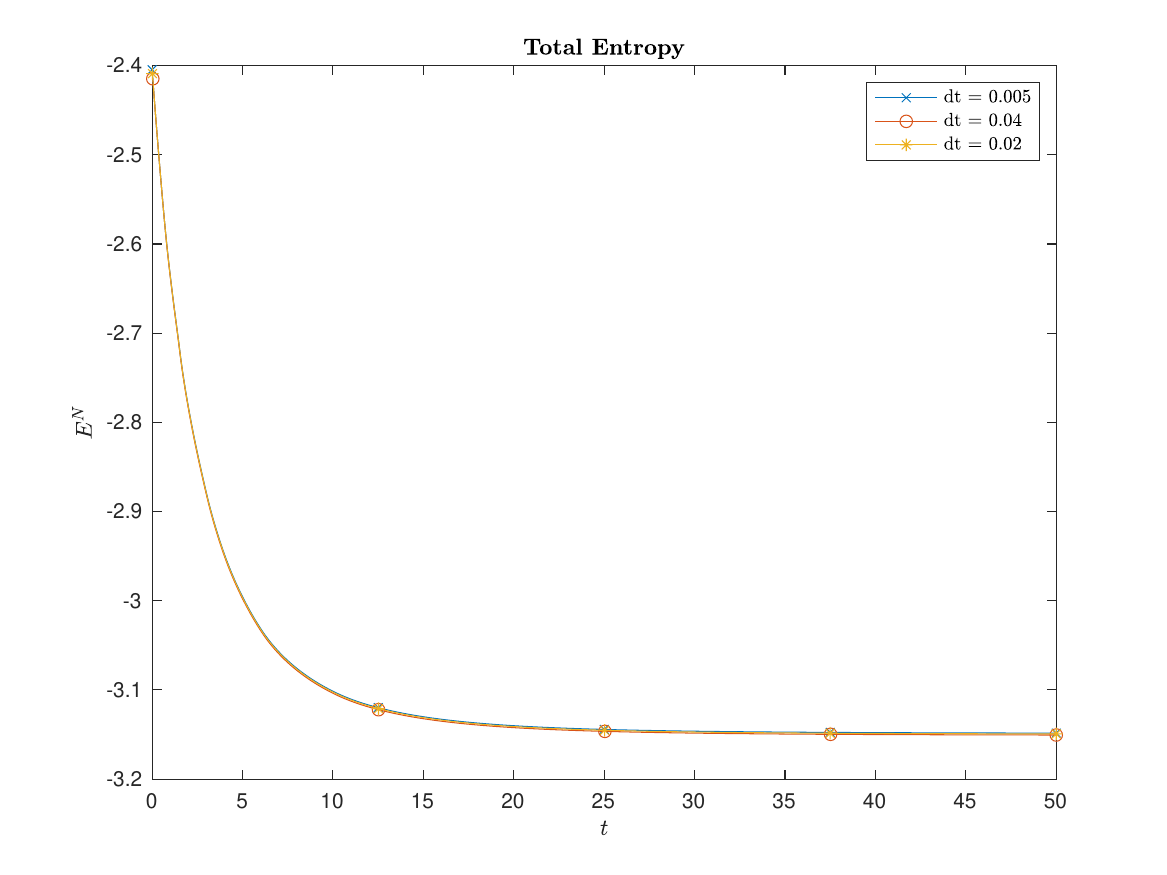}}

\newcommand{\ExampleFiveTemperature}{\includegraphics[width=.49\textwidth]{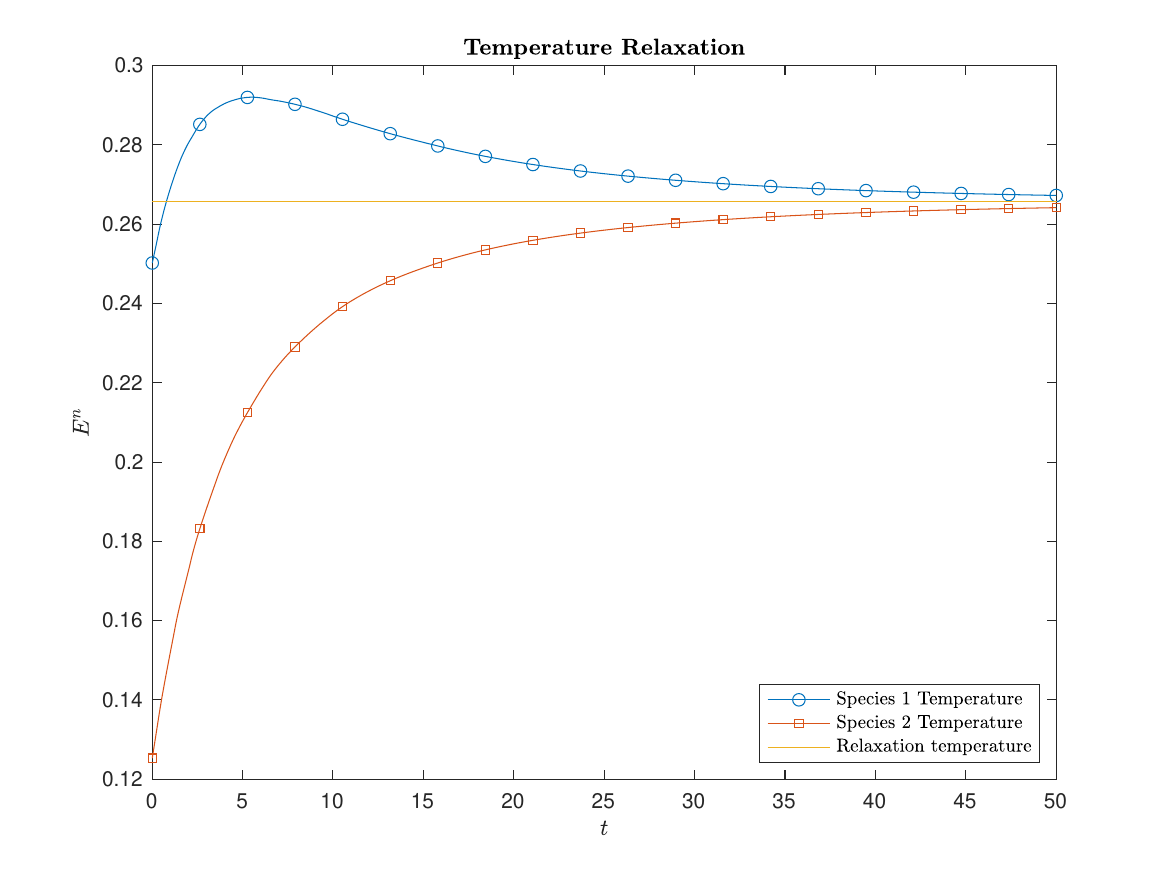}}

\newcommand{\ExampleFiveVelocityVOne}
{\includegraphics[width=.49\textwidth]{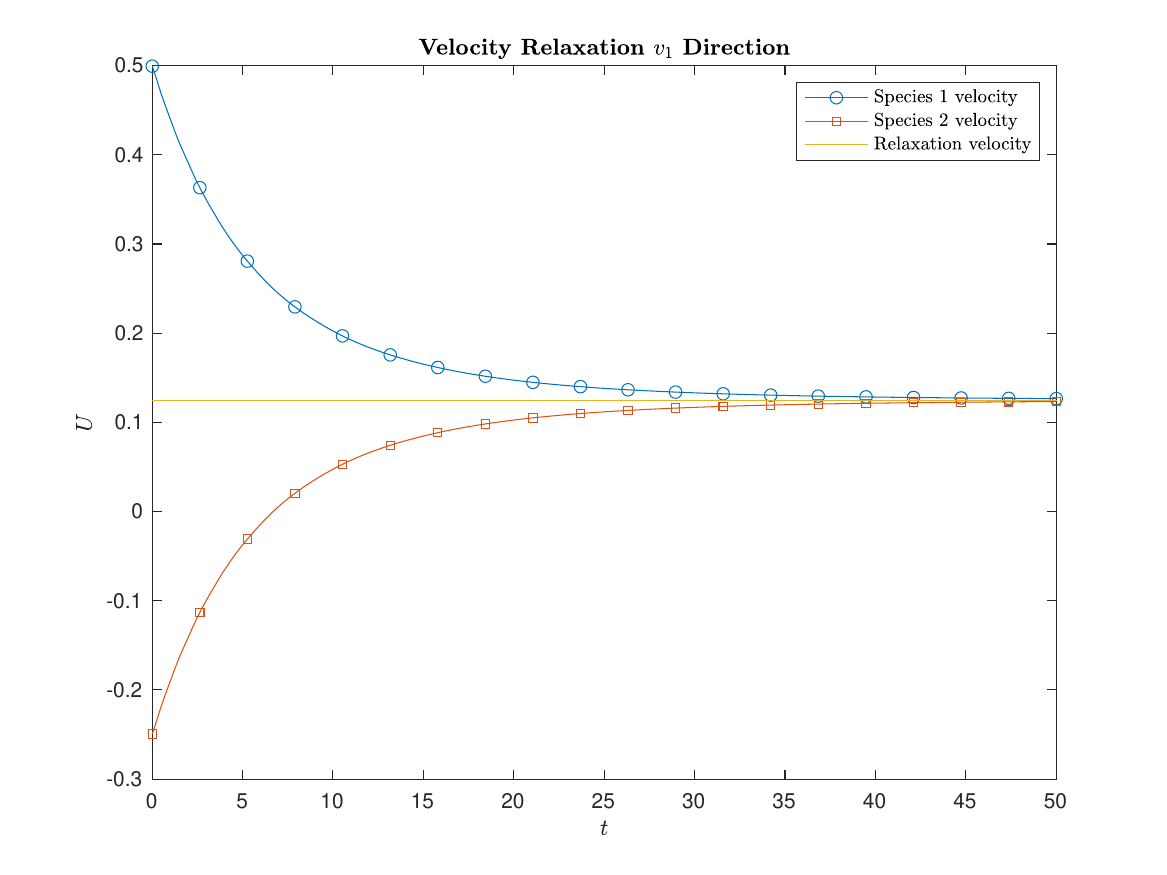}}

\newcommand{\ExampleFiveVelocityVTwo}
{\includegraphics[width=.49\textwidth]{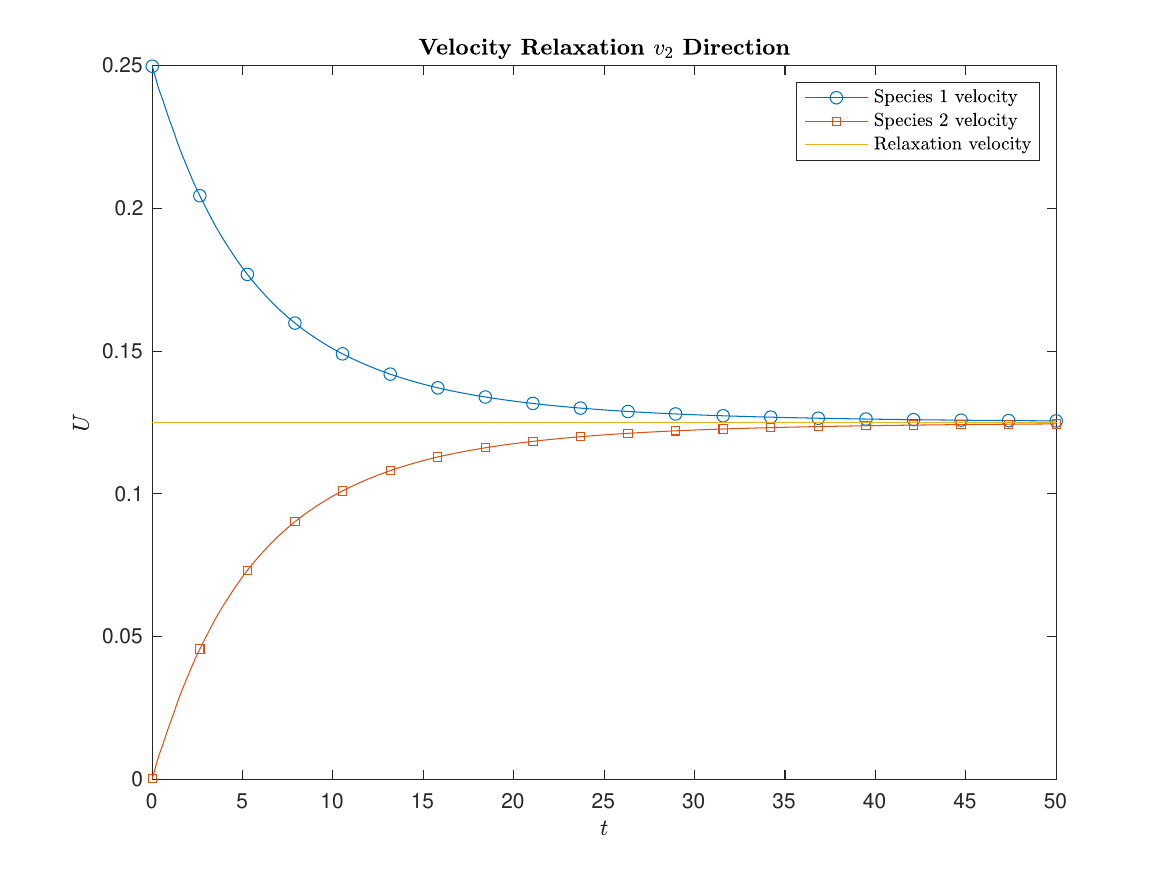}}

\newcommand{\ExampleFiveMomentumVOne}{\includegraphics[width=.49\textwidth]{Example_5_Total_Momentum_v1_direction_dt_test.eps}}

\newcommand{\ExampleFiveMomentumVTwo}{\includegraphics[width=.49\textwidth]{Example_5_Total_Momentum_v2_direction_dt_test.eps}}

\begin{figure}[htp]
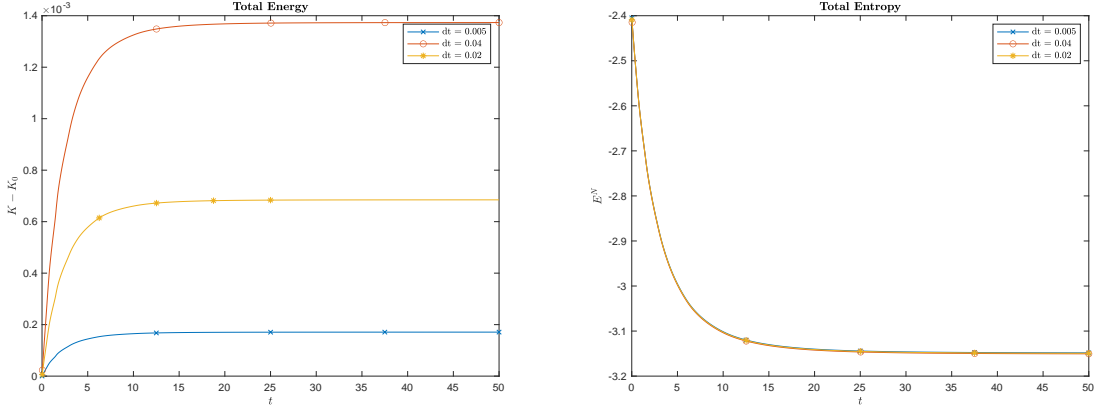

    \centering  
    \ExampleFiveEnergy
    \ExampleFiveEntropy
    \caption{Example \ref{Example:Coulomb case example 2}: The left and right plots are the time evolution of the total energy and the total entropy.}
    \label{fig:Example 5 Momentum total energy and total entropy}
\end{figure}

\begin{figure}[htp]
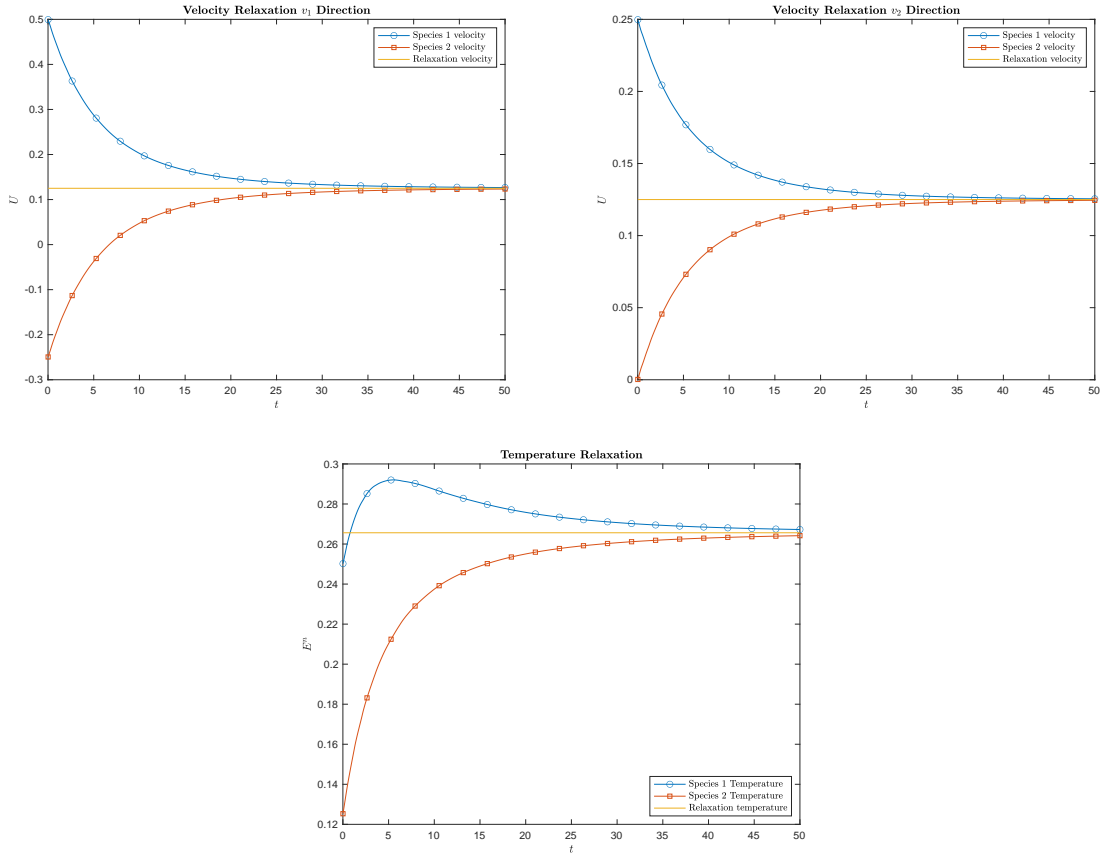

    \centering
    \ExampleFiveVelocityVOne
    \ExampleFiveVelocityVTwo \\
    \ExampleFiveTemperature
    \caption{Example \ref{Example:Coulomb case example 2}:  The left and right in the upper row are plots of the time evolution of the velocities of both species in each dimension, along with the equilibrium velocity.  The bottom is a plot of the time evolution of the temperature of both species, along with the equilibrium temperature.}
    \label{fig:Example 5 velocity and temperature}
\end{figure}

\subsection{Coulomb Example 2}\label{Example:Coulomb case example 3} 
In this example the masses are $m_1 = 2$ and $m_2 = 1$ and $B_{11} = \frac{1}{8}$, $B_{12} = B_{21} = \frac{1}{16}$, and $B_{22} = \frac{1}{16}$.
The number of particles used is $n^2 = 60^2$.  We compare the results from using the same computational domain size $L_1 = L_2 = 4$ for both species to the results from using a different computational domain size for each species $L_1 = 2.5$ and choosing $L_2$ so that the constraint \eqref{eq:computational domain constraint} is satisfied.  Figure \ref{fig:Example 6 Temperature and velocity} shows the time evolution of the temperature and velocity for each species.  Using different domain sizes for each species, the temperatures for each species relax to a species independent equilibrium temperature, while using the same computational domain for both species, the temperatures relax to species dependent equilibrium temperatures which is unphysical. Figure \ref{fig:Example 6 Energy and Entropy} shows the time evolution of the total energy and entropy using the same and different computational domain sizes. From these figures, it is clear that the constraint \eqref{eq:computational domain constraint} is critical to guarantee the correct relaxation of temperature (while this failure may not be visible in other quantities).

\newcommand{\ExampleSixEnergy}{\includegraphics[width=.49\textwidth]{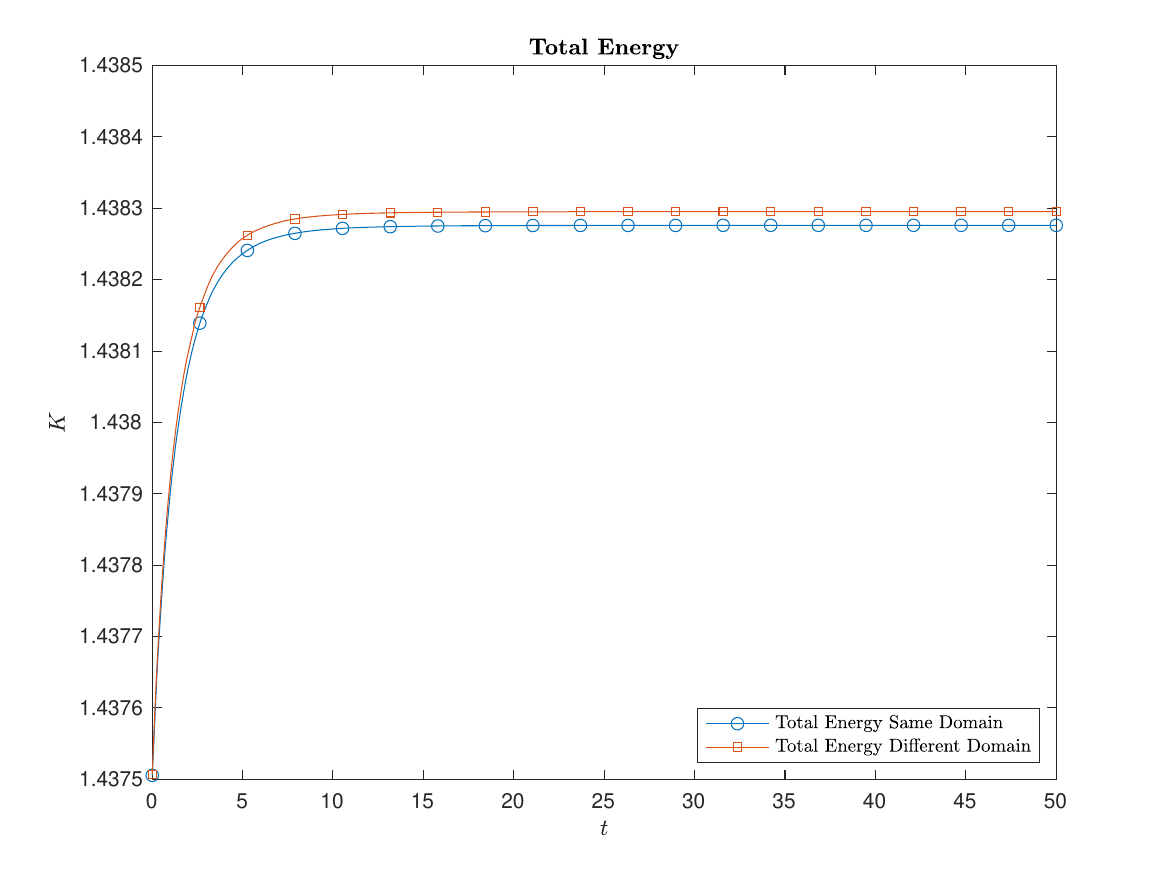}}

\newcommand{\ExampleSixEntropy}{\includegraphics[width=.49\textwidth]{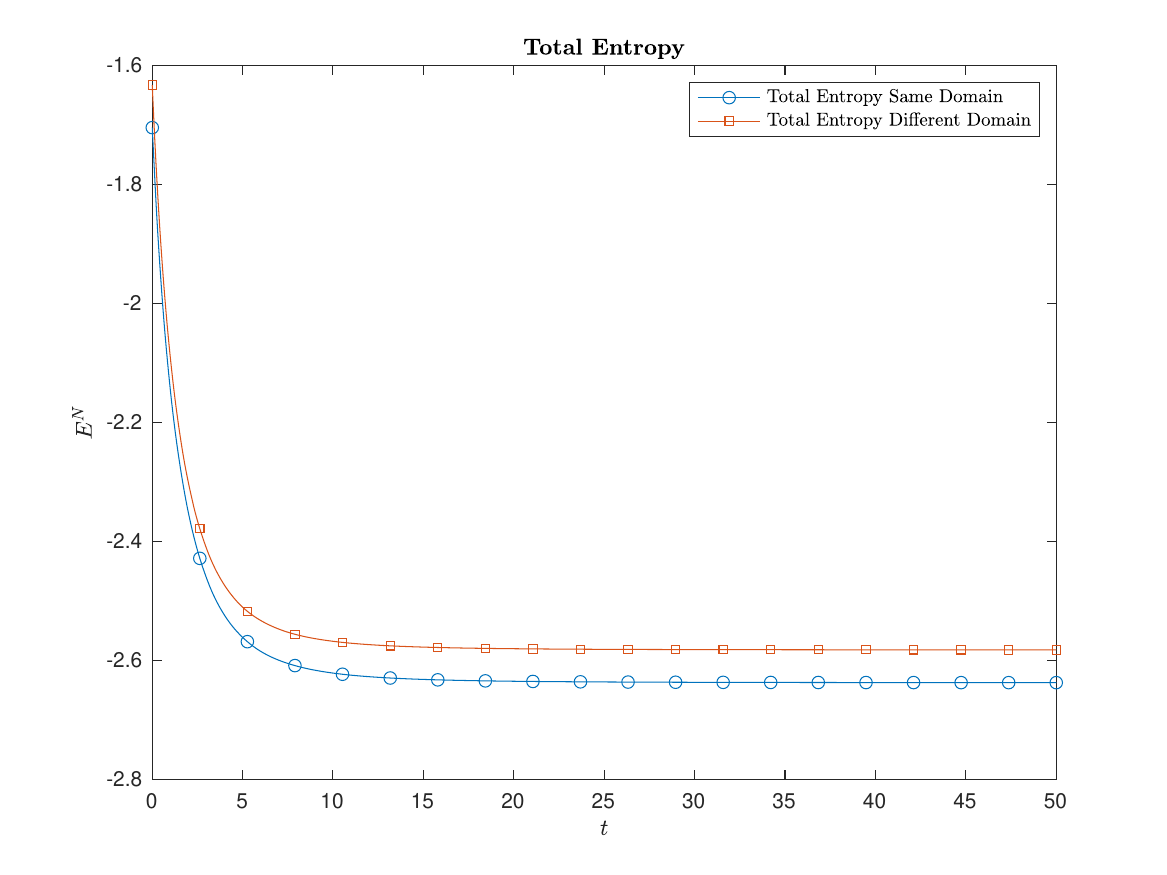}}

\newcommand{\ExampleSixVelocityVOne}{\includegraphics[width = .49\textwidth]{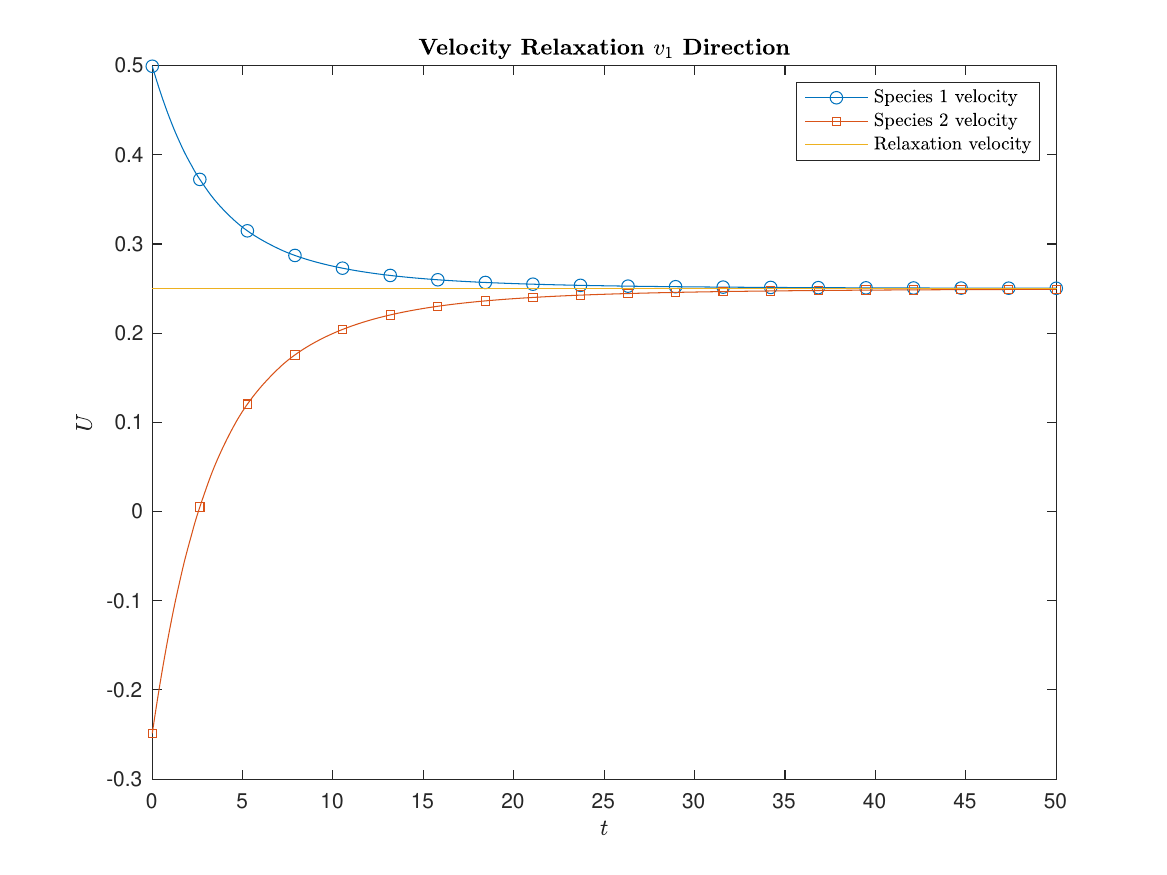}}

\newcommand{\ExampleSixVelocityTwo}{\includegraphics[width = .49\textwidth]{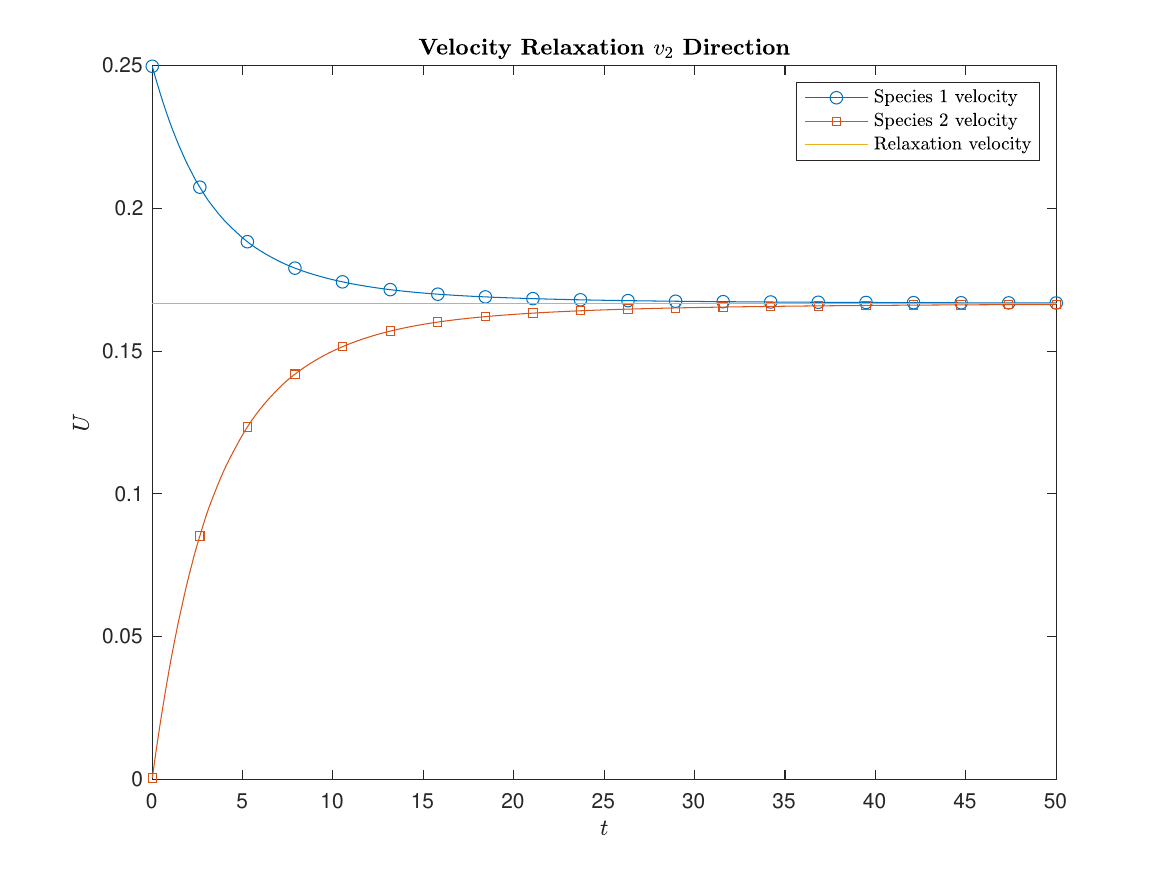}}

\newcommand{\ExampleSixTemperatureSame}{\includegraphics[width = .49\textwidth]{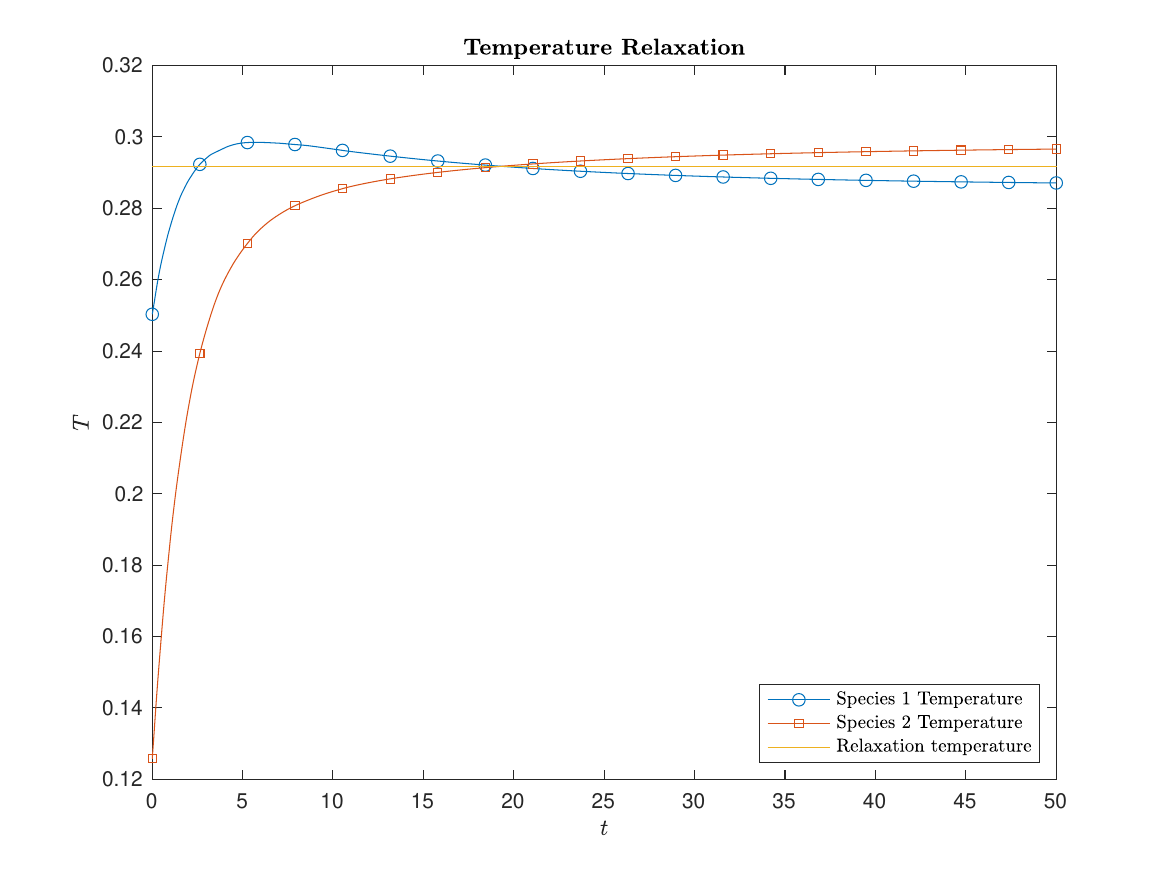}}

\newcommand{\ExampleSixTemperatureDifferent}{\includegraphics[width = 0.49\textwidth]{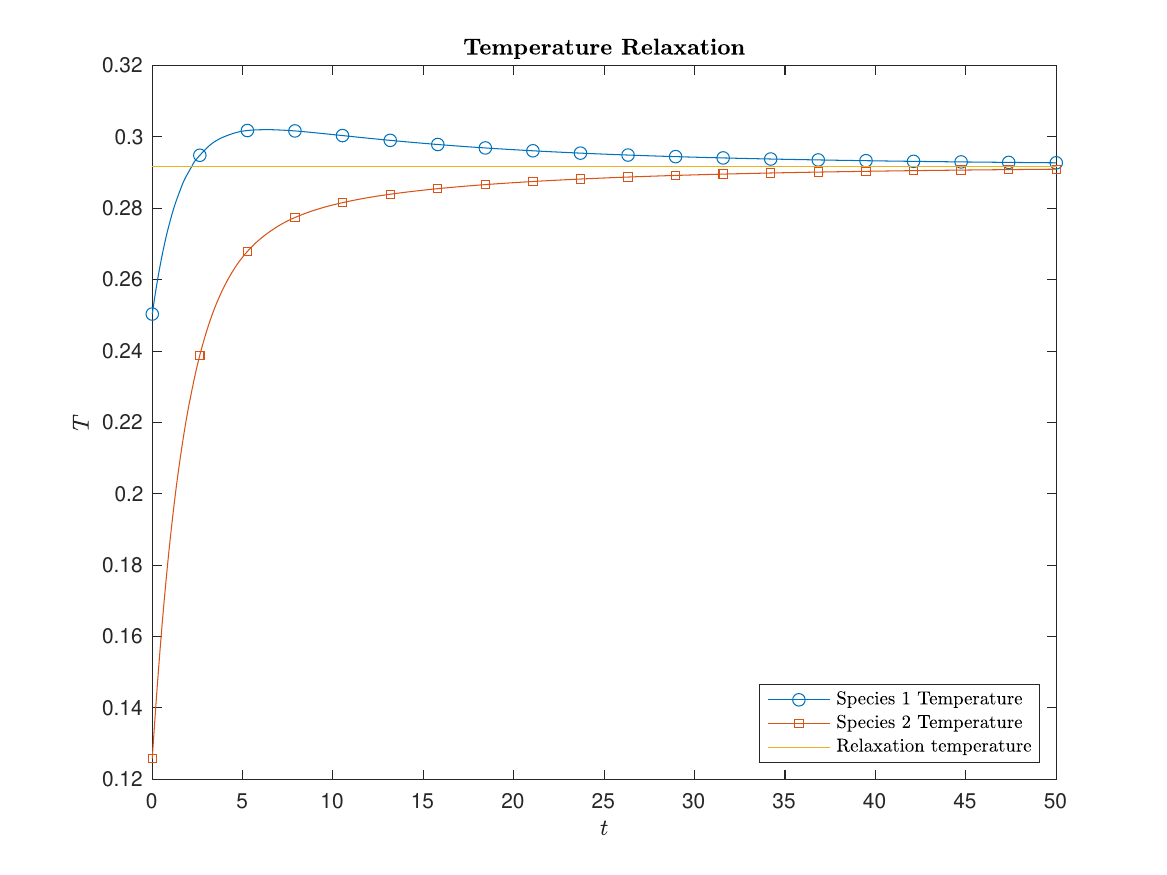}}

\begin{figure}[htp]
    \centering
    \ExampleSixTemperatureSame
    \ExampleSixTemperatureDifferent \\
    \ExampleSixVelocityVOne
    \ExampleSixVelocityTwo
    \caption{Example \ref{Example:Coulomb case example 3}:  The left and right plots in the upper row are  the time evolution of temperature of both species, along with the equilibrium temperature.  On the left the same computational domain is used for each species and on the right a different computational domain is used for each species.  The left and right plots in the bottom row are of the velocity relaxation, which is the same when using different computational domains versus using the same computational domain.}
    \label{fig:Example 6 Temperature and velocity}
\end{figure}

\begin{figure}[htp]
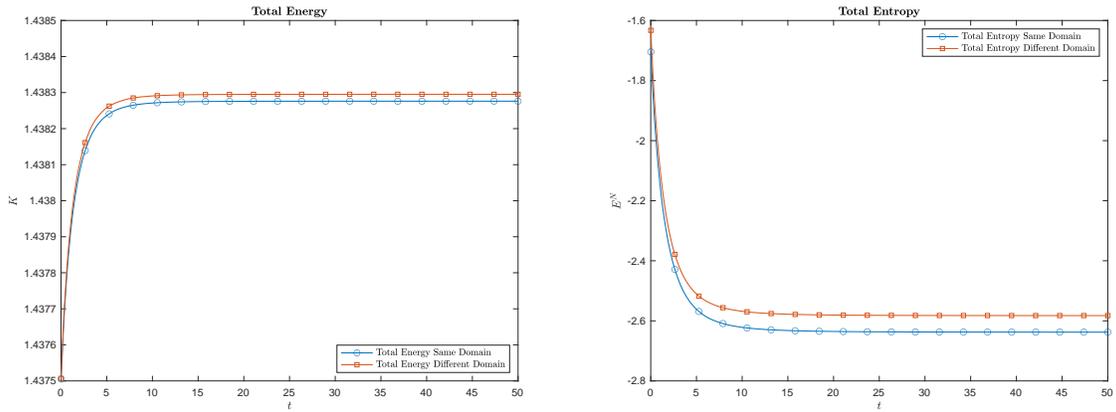

    \centering
    \ExampleSixEnergy
    \ExampleSixEntropy
    \caption{Example \ref{Example:Coulomb case example 3}:  The left and right plots are the time evolution of the total energy and the total entropy using the same computational domain for each species and a different computational domain for each species.}
    \label{fig:Example 6 Energy and Entropy}
\end{figure}

\section{Conclusions and future work}\label{sec:conclusion}
    In this work, we presented a deterministic particle method for the spatially homogeneous multispecies Landau equation. We first regularized the Landau equation to give access to the particle approximation. We showed that this regularized equation conserves total mass, momentum, and energy along with a decaying regularized entropy. It also has a Maxwellian equilibrium distribution and, a direct result of this, is that requiring $m_i \epsilon_i = \epsilon$ for all species forces the equilibrium temperature to be species independent.  At the semi-discrete level (continuous in time), we showed that the particle method inherits the conservation and entropy decay properties in the continuous case.  At the fully discrete level, we examined the use of the forward Euler and implicit midpoint method. Extensive numerical examples were presented to illustrate the accuracy and structure-preserving properties of the proposed particle method.
    



    There are several possible directions for future work regarding this project. First of all, it is natural to couple this method with the particle-in-cell (PIC) method to handle the full Vlasov-Landau equation \eqref{eq:multispecies Vlasov-Landau equation}, which is currently the predominate method used to solve collisionless plasma problems. Further, the particle method, as in its current form, is quite expensive especially in higher dimensions. To reduce the computational cost, a promising way is to apply the random batch method \cite{CJT22} while still retaining the major physical properties. Finally it would be interesting to explore an efficient iterative scheme for the implicit time stepping method.

\label{sec:con}



\section*{Appendix: A BKW solution to the multispecies Landau equation}
\label{app:BKW solution}
An exact solution to the multispecies Boltzmann equation was provided in \cite{KW1977} and here we use a similar strategy to find an exact solution to the multispecies Landau equation \eqref{eq:homogenous Landau equation} in the Maxwell collision case.  We further assume that the kernel of the Landau collision operator can be written as
\begin{equation*}
A_{ji}(z)=\frac{|\ln \delta|}{8\pi \varepsilon_0^2}\frac{q_i^2q_j^2}{m_i}(|z|^2I_d-z\otimes z)=\frac{B_{ij}}{m_i}(|z|^2 I_d-z\otimes z), \quad B_{ij} = \frac{|\ln \delta|}{8\pi \varepsilon_0^2}q_i^2q_j^2.
\end{equation*}
We start by looking for solutions with the ansatz 
\begin{equation*}
    f_i(t,v) = n_i\left(\frac{m_i}{2\pi K} \right)^{\frac{d}{2}}\exp{\left(-\frac{m_iv^2}{2K} \right)}\left(P_i + \frac{m_i}{K}Q_iv^2\right),
\end{equation*}
where $K = K(t)$ is to be found.  We begin with a calculation of the particle number densities and a normalization of bulk temperature
\begin{equation*}
    n_i = \int_{\mathbb{R}^d}f_i\rd{v} = n_i\left(P_i + dQ_i\right), \quad
    T = \frac{1}{dn}\sum_{i=1}^sm_i\int_{\mathbb{R}^d}f_iv^2\rd{v} = \frac{K}{n}\sum_{i=1}^sn_i(1+2Q_i) = 1.
\end{equation*}
From the temperature equation,
\begin{equation*}
    K = \frac{n}{n + 2\sum_{i=1}^s n_iQ_i},
\end{equation*}
and from the particle number densities equation $P_i = 1-dQ_i$,  which allows for the solution ansatz to be rewritten as 
\begin{equation}\label{eqn:BKW ansatz}
    f_i(t,v) = n_i\left(\frac{m_i}{2\pi K} \right)^{\frac{d}{2}}\exp{\left(-\frac{m_iv^2}{2K} \right)}\left(1-dQ_i + \frac{m_i}{K}Q_iv^2\right).
\end{equation}
Using this ansatz, we calculate the right-hand side of the Landau equation beginning with 
\begin{equation*}
    \nabla_{v}f_i = n_i\left(\frac{m_i}{2\pi K}\right)^{\frac{d}{2}}\exp{\left(-\frac{m_iv^2}{2K} \right)}\frac{m_i}{K}\left(2Q_i - \left(1-dQ_i+\frac{m_i}{K}Q_iv^2 \right)\right)v,
\end{equation*}
and thus,
\begin{equation*}
    \frac{1}{m_i}\nabla_v\log{f_i} = \frac{1}{m_i}\frac{\nabla_vf_i}{f_i} = \frac{1}{K}\frac{2Q_i}{1-dQ_i+\frac{m_i}{K}Q_iv^2}v - \frac{1}{K}v.
\end{equation*}
Therefore, we conclude that
\begin{align*}
    &\frac{1}{m_i}\nabla_v\log{f_i(v)} - \frac{1}{m_j}\nabla_{v_{*}}\log{f_j(v_{*})} \\
    &= \frac{2}{K}\frac{(Q_i - Q_j)v + Q_j(v-v_*) - dQ_iQ_j(v-v_*)}{\left((1-dQ_i)+\frac{m_i}{K}Q_iv^2 \right)\left((1-dQ_j)+\frac{m_j}{K}Q_jv_{*}^2 \right)}\\
    & \quad +\frac{2}{K}\frac{\frac{1}{K}Q_iQ_j\left((m_jv_*^2-m_iv^2)v+m_iv^2(v-v_*) \right)}{\left((1-dQ_i)+\frac{m_i}{K}Q_iv^2 \right)\left((1-dQ_j)+\frac{m_j}{K}Q_jv_{*}^2 \right)} - \frac{1}{K}(v-v_{*}).
\end{align*}
Because $A_{ji}(z)z = 0$, 
\begin{align*}
    A_{ji}&(v-v_{*})\left[\frac{1}{m_i}\nabla_v\log{f_i(v)} - \frac{1}{m_j}\nabla_{v_{*}}\log{f_j(v_{*})}\right] \\
    &= \frac{2}{K}\frac{(Q_i - Q_j)A_{ji}(v-v_{*})v + \frac{1}{K}Q_iQ_j\left(m_jv_{*}^2-m_iv^2\right)A_{ji}(v-v_{*})v}{\left((1-dQ_i)+\frac{m_i}{K}Q_iv^2 \right)\left((1-dQ_j)+\frac{m_j}{K}Q_jv_{*}^2 \right)},
\end{align*}
and
\begin{align*}
    &A_{ji}(v-v_{*})\left[\frac{1}{m_i}\nabla_v\log{f_i(v)} - \frac{1}{m_j}\nabla_{v_{*}}\log{f_j(v_{*})}\right]f_i(v)f_j(v_{*}) \\
    &= \frac{2}{K}n_in_j\left(\frac{m_i}{2\pi K}\right)^{\frac{d}{2}}\left(\frac{m_j}{2\pi K}\right)^{\frac{d}{2}}\exp{\left(-\frac{m_iv^2}{2K}\right)} \exp{\left(- \frac{m_jv_{*}^2}{2K} \right)} \\
    & \quad\times\left((Q_i - Q_j)A_{ji}(v-v_{*})v + \frac{1}{K}Q_iQ_j\left(m_jv_{*}^2-m_iv^2\right)A_{ji}(v-v_{*})v\right).
\end{align*}
To continue we need to integrate both sides of the equation above w.r.t $v_*$
\begin{align*}
    &\int_{\mathbb{R}^d} A_{ji}(v-v_{*})\left[\frac{1}{m_i}\nabla_v\log{f_i(v)} - \frac{1}{m_j}\nabla_{v_{*}}\log{f_j(v_{*})}\right]f_i(v)f_j(v_{*}) \rd{v_*}\\
    &= \frac{2}{K}n_in_j\left(\frac{m_i}{2\pi K} \right)^{\frac{d}{2}}\exp{\left(-\frac{m_iv^2}{2K} \right)}(I_1 + I_2),
\end{align*}
where 
\begin{align*}
    I_1 &= (Q_i-Q_j)\left(\frac{m_j}{2\pi K}\right)^{\frac{d}{2}}\int_{\mathbb{R}^d}\exp{\left(-\frac{m_jv_{*}^2}{2K} \right)}A_{ji}(v-v_*)v\rd{v_*},\\
    I_2 &= \frac{1}{K}Q_iQ_j\left(\frac{m_j}{2\pi K}\right)^{\frac{d}{2}}\int_{\mathbb{R}^d}\exp{\left(-\frac{m_jv_{*}^2}{2K} \right)}A_{ji}(v-v_*)\left(m_jv_{*}^2-m_iv^2\right)v\rd{v_*}.
\end{align*}
After some calculations we see that 
\begin{align*}
    I_1 &= K(Q_i-Q_j)\frac{B_{ij}}{m_im_j}(d-1)v,\\
    I_2 &= Q_iQ_j\frac{B_{ij}}{m_im_j}(d-1)\left((d+2)K-m_iv^2\right)v,
\end{align*}
and therefore, we conclude that 
\begin{align*}
&\int_{\mathbb{R}^d} A_{ji}(v-v_{*})\left[\frac{1}{m_i}\nabla_v\log{f_i(v)} - \frac{1}{m_j}\nabla_{v_{*}}\log{f_j(v_{*})}\right]f_i(v)f_j(v_{*}) \rd{v_*}\\
    &= \frac{2}{K}n_in_j\left(\frac{m_i}{2\pi K} \right)^{\frac{d}{2}}\exp{\left(-\frac{m_iv^2}{2K} \right)}(d-1)\frac{B_{ij}}{m_im_j}\left(K(Q_i-Q_j)+Q_iQ_j\left((d+2)K-m_iv^2\right)\right)v.
\end{align*}
Finally, the right-hand side of the Landau equation (\ref{eq:homogenous Landau equation}) reads
\begin{equation}\label{eq:RHS}
\begin{aligned} 
&\sum_{j=1}^{s}\nabla_{v} \cdot \int_{\mathbb{R}^d} A_{ji}(v-v_{*})\left[\frac{1}{m_i}\nabla_v\log{f_i(v)} - \frac{1}{m_j}\nabla_{v_{*}}\log{f_j(v_{*})}\right]f_i(v)f_j(v_{*}) \rd{v_*}\\
    &
    \begin{aligned}
    =\sum_{j=1}^s&\frac{2}{K}n_in_j\left(\frac{m_i}{2\pi K}\right)^{\frac{d}{2}}(d-1)\frac{B_{ij}}{m_im_j}\exp{\left(-\frac{m_iv^2}{2K} \right)}\\
    &\times \left(Q_iQ_j\left(v^4\frac{m_i^2}{K}-2m_i(d+2)v^2+d(d+2)K\right)  + (Q_i-Q_j)\left( dK - m_iv^2\right) \right).
    \end{aligned}
\end{aligned}
\end{equation}
For the calculation of the left-hand side of (\ref{eq:homogenous Landau equation}), taking the derivative w.r.t time of \eqref{eqn:BKW ansatz} yields
\begin{equation}\label{eq:LHS}
\begin{aligned}
    \partial_tf_i = \;&n_i\left( \frac{m_i}{2\pi K}\right)^{\frac{d}{2}}\exp{\left(-\frac{m_iv^2}{2K} \right)}\\
    &  \times
    \left(\left(\frac{m_iv^2}{2K^2}K^{\prime} - \frac{d}{2K}K^{\prime}\right)\left(1-dQ_i + \frac{m_i}{K}Q_iv^2\right)-dQ_i^{\prime}+\frac{m_i}{K}Q_i^{\prime}v^2-\frac{m_i}{K^2}Q_iK^{\prime}v^2\right).
\end{aligned}
\end{equation}

Here for simplicity we consider two-species systems $(s=2)$ and assume $Q_1 = Q_2 = Q$.  With this simplification we can solve for $Q$ in terms of $K$ 
\begin{equation*}
    K = \frac{n_1 + n_2}{n_1 + n_2 + 2(n_1+n_2)Q}=\frac{1}{1+2Q} \quad \Longleftrightarrow \quad Q = \frac{1-K}{2K}
    \quad \mbox{and} \quad Q^{\prime} = \frac{-K^{\prime}}{2K^2}.
\end{equation*}
Then \eqref{eq:RHS} simplifies to 
\begin{equation*}
    \frac{n_i}{m_i}\left(\frac{m_i}{2\pi K} \right)^{\frac{d}{2}}(d-1)\exp{\left(-\frac{m_iv^2}{2K}\right)}\frac{(1-K)^2}{2K^4} \left(m_i^2v^4 - 2m_i(d+2)Kv^2 + d(d+2)K^2 \right)\sum_{j=1}^2\frac{B_{ij}}{m_j}n_j,
\end{equation*}
while (\ref{eq:LHS}) simplifies to 
\begin{equation*}
n_i\frac{(1-K)}{4K^4}\left(\frac{m_i}{2 \pi K} \right)^{\frac{d}{2}}\exp{\left(-\frac{m_iv^2}{2 K} \right)}\left(m_i^2v^4 - 2(d+2)m_iKv^2 + d(d+2)K^2 \right)K^{\prime}.
\end{equation*}
Matching the above two equations, we have 
\begin{equation*}
    K^{\prime} = \frac{2}{m_i}(d-1)(1-K)\sum_{j=1}^2\frac{B_{ij}}{m_j}n_j.
\end{equation*}
We further require that 
$$
\sum_{j=1}^2 \frac{B_{ij}}{m_im_j}n_j := \beta_i, \quad \mbox{and} \quad \beta_1 = \beta_2 = \beta,
$$ and $\beta$ is some constant. Then,
$$K'=2\beta(d-1)(1-K),$$
which results in $K=1-C\exp(-2\beta(d-1)t)$, where $C$ is a constant of integration.  For the 2D BKW examples, we choose $C=1/2$ and $\beta = 1/16$, then $K = 1-\exp(-t/8)/2$.



\bibliographystyle{plain}
\bibliography{hu_bibtex}

\begin{thebibliography}{10}

\bibitem{BUET1997310}
C.~Buet, S.~Cordier, P.~Degond, and M.~Lemou.
\newblock {Fast algorithms for numerical, conservative, and entropy
  approximations of the Fokker--Planck--Landau equation}.
\newblock {\em Journal of Computational Physics}, 133(2):310--322, 1997.

\bibitem{CCP2019}
J.A. Carrillo, K.~Craig, and F.S. Patacchini.
\newblock {A blob method for diffusion}.
\newblock {\em Calculus of Variations and Partial Differential Equations},
  58:53, 2019.

\bibitem{single_species_Landau_particle_method}
J.A. Carrillo, J.~Hu, L.~Wang, and J.~Wu.
\newblock {A particle method for the homogeneous Landau equation}.
\newblock {\em Journal of Computational Physics: X}, 7:100066, 2020.

\bibitem{CJT22}
J.A. Carrillo, S.~Jin, and Y.~Tang.
\newblock {Random batch particle methods for the homogeneous Landau equation}.
\newblock {\em Commun. Comput. Phys.}, 31(4):997--1019, 2022.

\bibitem{Particle_method_review}
A.~Chertock.
\newblock {A practical guide to deterministic particle methods}.
\newblock {\em Handbook of Numerical Analysis}, 18:177--202, 2017.

\bibitem{DLD1992}
P.~Degond and B.~Lucquin-Desreux.
\newblock {The Fokker-Planck asymptotics of the Boltzmann collision operator in
  the Coulomb case}.
\newblock {\em Mathematical Models and Methods in Applied Sciences},
  2:167--182, 1992.

\bibitem{DLD1994}
P.~Degond and B.~Lucquin-Desreux.
\newblock {An entropy scheme for the Fokker-Planck collision operator of plasma
  kinetic theory}.
\newblock {\em Numerische Mathematik}, 68(2):239--262, 1994.

\bibitem{Desvillettes1992}
L.~Desvillettes.
\newblock {On asymptotics of the Boltzmann equation when the collisions become
  grazing}.
\newblock {\em Transport Theory and Statistical Physics}, 21(3):259--276, 1992.

\bibitem{GZ2017}
M.P. Gualdani and N.~Zamponi.
\newblock {Spectral gap and exponential convergence to equilibrium for a
  multi-species Landau system}.
\newblock {\em Bulletin des Sciences Math{\'e}matiques}, 141:509--538, 2017.

\bibitem{HAGER2016644}
R.~Hager, E.S. Yoon, S.~Ku, E.F. D'Azevedo, P.H. Worley, and C.S. Chang.
\newblock {A fully non-Linear multi-species Fokker--Planck--Landau collision
  operator for simulation of fusion plasma}.
\newblock {\em Journal of Computational Physics}, 315:644--660, 2016.

\bibitem{Hirvijoki_2021}
E.~Hirvijoki.
\newblock {Structure-preserving marker-particle discretizations of Coulomb
  collisions for particle-in-cell codes}.
\newblock {\em Plasma Physics and Controlled Fusion}, 63(4):044003, 2021.

\bibitem{Hirvijoki_2017}
E.~Hirvijoki and M.F. Adams.
\newblock {Conservative discretization of the Landau collision integral}.
\newblock {\em Physics of Plasmas}, 24(3):032121, 2017.

\bibitem{JAISWAL201956}
S.~Jaiswal, A.A. Alexeenko, and J.~Hu.
\newblock {A discontinuous Galerkin fast spectral method for the multi-species
  Boltzmann equation}.
\newblock {\em Computer Methods in Applied Mechanics and Engineering},
  352:56--84, 2019.

\bibitem{KW1977}
M.~Krook and T.T. Wu.
\newblock {Exact solution of Boltzmann equations for multicomponent systems}.
\newblock {\em Physical Review Letters}, 38(18):991--993, 1977.

\bibitem{L1936}
L.D. Landau.
\newblock {Die kinetische gleichung für den fall Coulombscher wechselwirkung}.
\newblock {\em Phys. Z. Sowjetunion}, 10(2):154--164, 1936.

\bibitem{Lemou1998}
M.~Lemou.
\newblock {Multipole expansions for the Fokker-Planck-Landau operator}.
\newblock {\em Numerische Mathematik}, 78(4):597--618, 1998.

\bibitem{PARESCHI2000216}
L.~Pareschi, G.~Russo, and G.~Toscani.
\newblock {Fast spectral methods for the Fokker--Planck--Landau collision
  operator}.
\newblock {\em Journal of Computational Physics}, 165(1):216--236, 2000.

\bibitem{Shiroto_2019}
T.~Shiroto and Y.~Sentoku.
\newblock {Structure-preserving strategy for conservative simulation of the
  relativistic nonlinear Landau-Fokker-Planck equation}.
\newblock {\em Physical Review E}, 99(5):053309, 2019.

\bibitem{TAITANO2016391}
W.T. Taitano, L.~Chac{\'o}n, and A.N. Simakov.
\newblock {An adaptive, conservative 0D-2V multispecies
  Rosenbluth--Fokker--Planck solver for arbitrarily disparate mass and
  temperature regimes}.
\newblock {\em Journal of Computational Physics}, 318:391--420, 2016.

\bibitem{TAITANO2015357}
W.T. Taitano, L.~Chac{\'o}n, A.N. Simakov, and K.~Molvig.
\newblock {A mass, momentum, and energy conserving, fully implicit, scalable
  algorithm for the multi-dimensional, multi-species Rosenbluth--Fokker--Planck
  equation}.
\newblock {\em Journal of Computational Physics}, 297:357--380, 2015.

\bibitem{Villani1998}
C.~Villani.
\newblock {On a new class of weak solutions to the spatially homogeneous
  Boltzmann and Landau equations}.
\newblock {\em Archive for Rational Mechanics and Analysis}, 143(3):273--307,
  1998.

\bibitem{ZPH2022}
F.~Zonta, J.V. Pusztay, and E.~Hirvijoki.
\newblock {Multispecies structure-preserving particle discretization of the
  Landau collision operator}.
\newblock {\em Physics of Plasmas}, 29(12):123906, 2022.

\end{thebibliography}

\end{document}